\title{Openness results for uniform K-stability}
\author{Kento Fujita} 
\date{\today}
\subjclass[2010]{Primary 14L24; Secondary 14J45}
\keywords{K-stability, Fano varieties, demi-normal pairs}
\address{Research Institute for Mathematical Sciences, Kyoto University, Kyoto 606-8502, Japan}
\email{fujita@kurims.kyoto-u.ac.jp}
\newcommand{\pr}{\mathbb{P}}
\newcommand{\Z}{\mathbb{Z}}
\newcommand{\Q}{\mathbb{Q}}
\newcommand{\R}{\mathbb{R}}
\newcommand{\A}{\mathbb{A}}
\newcommand{\G}{\mathbb{G}}
\newcommand{\ND}{\operatorname{N}^1}
\newcommand{\Nef}{\operatorname{Nef}}
\newcommand{\Eff}{\operatorname{Eff}}
\newcommand{\Spec}{\operatorname{Spec}}
\newcommand{\id}{\operatorname{id}}
\newcommand{\DF}{\operatorname{DF}}
\newcommand{\ord}{\operatorname{ord}}
\newcommand{\vol}{\operatorname{vol}}
\newcommand{\NA}{\operatorname{NA}}
\newcommand{\sC}{\mathcal{C}}
\newcommand{\sO}{\mathcal{O}}
\newcommand{\sX}{\mathcal{X}}
\newcommand{\sY}{\mathcal{Y}}
\newcommand{\sL}{\mathcal{L}}
\newcommand{\sM}{\mathcal{M}}
\newcommand{\sZ}{\mathcal{Z}}
\newcommand{\sD}{\mathcal{D}}
\newtheorem{thm}{Theorem}[section]
\newtheorem{lemma}[thm]{Lemma}
\newtheorem{proposition}[thm]{Proposition}
\newtheorem{corollary}[thm]{Corollary}
\theoremstyle{definition}
\newtheorem{definition}[thm]{Definition}
\newtheorem{remark}[thm]{Remark}
\newtheorem*{ack}{Acknowledgments}
\begin{document}

\maketitle 

\begin{abstract}
Assume that a projective variety together with a polarization is uniformly K-stable. 
If the polarization is canonical or anti-canonical, then the projective variety is 
uniformly K-stable with respects to any polarization 
sufficiently close to the original polarization. 
\end{abstract}

\setcounter{tocdepth}{1}
\tableofcontents

\section{Introduction}\label{intro_section}

In this article, we work over an arbitrary algebraically closed field $\Bbbk$ of 
characteristic zero. A \emph{variety} is assumed to be a connected, reduced, 
separated and of finite type scheme over $\Spec\Bbbk$. For the 
minimal model program, we refer the readers to \cite{KoMo} and \cite{SingBook}. 

In this article, we show the following result: 

\begin{thm}[{see Corollaries \ref{fano2_cor} and \ref{gt2_cor}}]\label{mainthm}
Let $(X, \Delta)$ be a projective slc pair and $L$ be an ample $\Q$-line bundle on $X$. 
Assume that $((X, \Delta), L)$ is uniformly K-stable. If $L=K_X+\Delta$ or 
$L=-(K_X+\Delta)$, then there exists a Euclidean open neighborhood 
$U\subset\ND(X)_\R$ of $L$ such that $((X, \Delta), L')$ is uniformly K-stable 
for any $\Q$-line bundle $L'$ with $L'\in U$. 
\end{thm}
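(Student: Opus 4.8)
The plan is to unwind the definition of uniform K-stability and reduce to a uniform comparison estimate; the Ding invariant is the tool that makes the hypothesis $L_0=\pm(K_X+\Delta)$ relevant, where $L_0$ denotes the given ample bundle. By definition it suffices to produce $\delta'>0$ and $\eta>0$ so that for every $\Q$-line bundle $D\in\ND(X)_\R$ with $\|D\|<\eta$ and every normal test configuration $(\sX,\sL)$ of $(X,L')$, with $L':=L_0+D$, one has $\DF_{L'}(\sX,\sL)\ge\delta'\,J^{\NA}_{L'}(\sX,\sL)$; here $\DF$, $J^{\NA}$ and $E^{\NA}$ are computed from the intersection formulas on a normal compactification $\bar\sX$ over $\pr^1$, and are unchanged if $\bar\sX$ is replaced by a higher model $\sX'$ dominating $X\times\pr^1$ via some $\rho$. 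The starting point: for a log Fano, resp.\ canonically polarized, pair uniform K-stability is equivalent to uniform Ding stability, so the hypothesis supplies $\delta_0>0$ with $\Ding_{L_0}(\sX,\sM)\ge\delta_0\,J^{\NA}_{L_0}(\sX,\sM)$ for all such $(\sX,\sM)$; this will be combined with the inequality $\DF_{L'}(\sX,\sL)\ge\Ding^{D}_{L'}(\sX,\sL)$, where $\Ding^{D}_{L'}$ denotes the $D$-twisted Ding invariant, built from $E^{\NA}(\sX,\sL)$ and a log canonical threshold $\lct(\sX,\Delta_\sX\pm\overline D_\sX;\sX_0)$ with $\overline D_\sX$ a chosen extension of $D$, and which is valid since the usual proof that $\DF\ge\Ding$ is insensitive to twisting by an arbitrary $\Q$-line bundle.

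Now fix a model $\sX'$ dominating $\bar\sX$ and $X\times\pr^1$, write the pullback of $\bar\sL$ as $\overline{L'}+\sE$ with $\overline{L'}$ the pullback of $L'$ from $X$ and $\sE$ a $\Q$-divisor supported on the central fibre together with the $\rho$-exceptional locus, and let $(\sX',\sM_0)$ be the configuration of $(X,L_0)$ with relative polarization $\overline{L_0}+\sE$; here $\sE$ is the polarization-independent datum of the test configuration. The difference $\Ding^{D}_{L'}(\sX,\sL)-\Ding_{L_0}(\sX',\sM_0)$ splits as (i) the difference $E^{\NA}_{L_0}(\sX',\sM_0)-E^{\NA}_{L'}(\sX,\sL)$, which by the intersection formula and $\overline{L'}=\overline{L_0}+\overline D$, $\overline D:=\rho^*(\mathrm{pr}_1^*D)$, is a polynomial in $\overline D$ vanishing to order $\ge 1$; and (ii) the difference of the two log canonical thresholds, whose boundaries differ by the effective perturbation coming from $\pm D$. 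Granting a uniform estimate of the shape $|(\text{error})|\le C\|D\|\cdot J^{\NA}_{L_0}(\sX',\sM_0)$ with $C$ and $\eta$ independent of $(\sX,\sL)$, one gets, for $\|D\|<\eta$ and $C\eta<\delta_0/2$,
\[
\DF_{L'}(\sX,\sL)\ \ge\ \Ding^{D}_{L'}(\sX,\sL)\ \ge\ \Ding_{L_0}(\sX',\sM_0)-C\|D\|\,J^{\NA}_{L_0}(\sX',\sM_0)\ \ge\ (\delta_0-C\|D\|)\,J^{\NA}_{L_0}(\sX',\sM_0),
\]
and, since the same estimate gives $J^{\NA}_{L_0}(\sX',\sM_0)\ge(1+C\|D\|)^{-1}J^{\NA}_{L'}(\sX,\sL)$, one concludes $\DF_{L'}(\sX,\sL)\ge(\delta_0/4)\,J^{\NA}_{L'}(\sX,\sL)$; then $U:=\{L_0+D:\|D\|<\eta\}$ and $\delta':=\delta_0/4$ work. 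The point of the hypothesis is precisely this reduction to the Ding invariant: for a polarization not of the form $\pm(K_X+\Delta)$ the intersection formula for $\DF$ carries an extra pullback of the fixed but a priori uncontrolled class $K_X+\Delta$, and no analogous reduction is available.

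The heart of the matter, and the step I expect to be the main obstacle, is the uniform error estimate. Since the ample cone of $X$ is open, write $D=A_1-A_2$ with $A_1,A_2$ ample $\Q$-line bundles of norm $\lesssim\|D\|$, so that $\rho^*(\mathrm{pr}_1^*A_i)$ is nef on $\sX'$; one must then (i) bound the mixed intersection numbers of $\overline{L_0}+\sE$, $\sE$ and these nef pullbacks appearing in the error by a uniform multiple of $\|D\|\cdot J^{\NA}_{L_0}(\sX',\sM_0)$, for which I would use the equivalence of $J^{\NA}$ with the minimal norm together with Hodge-index and log-concavity-of-volume type inequalities, and the fact that the relevant functionals propagate from ample to arbitrary relative polarizations; and (ii) bound the variation of $\lct(\sX',\,\cdot\,;\sX'_0)$ under a small effective perturbation of the boundary by a uniform constant times its size, which is delicate precisely because the family of test configurations is unbounded, so one must argue on the divisors that compute, or nearly compute, the threshold and control their weight on the $\pm D$-part relative to their weight on $\sX'_0$. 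Finally, the genuinely non-normal (slc) case is reduced to the above by passing to the normalization and the conductor divisor and using inversion of adjunction, as in the demi-normal theory.
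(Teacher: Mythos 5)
Your proposal has a genuine gap at exactly the point you flag as ``the heart of the matter'': the uniform error estimate $|(\text{error})|\le C\|D\|\cdot J^{\NA}_{L_0}(\sX',\sM_0)$ is assumed, not proven, and it is the entire difficulty. The intersection-theoretic part (i) is plausibly controllable by the negativity lemma, but part (ii) --- bounding the variation of $\lct(\sX',\cdot\,;\sX'_0)$ under a perturbation of the boundary uniformly over the unbounded family of all test configurations --- is precisely the kind of statement that is not available and that makes openness of uniform K-stability for general polarizations an open problem; you give no argument for it. There is also a questionable premise at the start: the equivalence of uniform K-stability with uniform Ding stability is a theorem for log Fano pairs, but the Ding invariant is intrinsically an anticanonical-polarization object, and no such equivalence is standard for canonically polarized pairs. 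In that case the relevant input is the much stronger unconditional inequality $\DF_\Delta(\sX,\sL)\ge\frac{1}{n}J^{\NA}(\sX,\sL)$ of Boucksom--Hisamoto--Jonsson (Theorem \ref{odk_thm} \eqref{odk_thm1}), which your scheme does not exploit.

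The paper avoids your obstacle entirely by never comparing invariants of one test configuration for two polarizations through an additive error term. Instead it converts the change of polarization into a change of \emph{boundary}: one adds an effective divisor so that the new polarization is again proportional to $\pm(K_X+\Delta+\cdot)$, and the only comparison across test configurations needed is the one-sided, purely intersection-theoretic inequality
\[
n\mu_N(L)\,J^{\NA}(\sX,\sL)\ \ge\ \DF_{\Delta+N}(\sX,\sL)-\DF_{\Delta}(\sX,\sL)
\]
of Proposition \ref{perturb_prop}, proved by the negativity lemma applied to $(\phi-\psi_L)^{\cdot 2}$ --- no log canonical thresholds appear. The stability of the perturbed pair is then controlled either by the $1/n$ bound above (canonical case, via Theorem \ref{W_thm}) or, in the Fano case, by the valuative criterion: the effect of adding $B$ to the boundary on $A_{(X,\Delta)}(F)$ and on $\vol_X(L-B-xF)$ is estimated uniformly over all divisors $F$ over the \emph{fixed} variety $X$ (Theorem \ref{fanolog_thm}), a far more tractable family than all test configurations. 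Note also that in the Fano case K-semistability already forces $(X,\Delta)$ to be klt, so the slc reduction you sketch at the end is only an issue in the canonically polarized case, where the paper handles it by passing to the associated normal test configuration rather than by inversion of adjunction. To repair your proposal you would either have to prove the uniform lct-variation estimate (not currently known) or restructure the argument along the boundary-perturbation lines above.
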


Let $(X, \Delta)$ be a projective slc pair and $L$ be an ample $\Q$-line bundle on $X$. 
Motivated by the fundamental works \cite{tian, don, sz1, sz2, dervan, BHJ}, we are 
interested in whether $((X, \Delta), L)$ is \emph{uniformly K-stable} or not. 
However, many basic properties of uniform K-stability remain unknown. 
For example, it is expected that the uniform K-stability of $((X, \Delta), L)$ implies 
the uniform K-stability of $((X, \Delta), L')$ for any ample $\Q$-line bundle $L'$ such that 
$L'$ is very close to $L$ in $\ND(X)_\R$. 
In fact, LeBrun and Simanca showed in \cite[Corollary 2]{LS} that, if 
$(M, J, \omega)$ is a compact constant scalar curvature K\"ahler manifold with 
the automorphism group of $M$ semisimple, then there exists a Euclidean open 
neighborhood $U\subset H^{1,1}(M, \R)$ of the class $[\omega]$ such that each 
element in $U$ can be represented by the K\"ahler form of 
a constant scalar curvature K\"ahler metric. 
The problem is hard to prove in general. 
Theorem \ref{mainthm} gives a partial affirmative answer of the problem. 

The idea of the proof of Theorem \ref{mainthm} is simple. 
Firstly we perturb both the boundary and the polarization. Secondly we perturb 
only the boundary. For the first step, when $L=-(K_X+\Delta)$, we use Odaka's theorem 
\cite{annals} and a valuative 
criterion for uniform K-stability of log Fano pairs established in \cite{fjta, li, fjtb, fjt17} 
(see Theorem \ref{fanolog_thm}); when $L=K_X+\Delta$, we use the result 
on the uniform bounds of Donaldson-Futaki invariants divided by certain norms 
\cite[Corollary 9.3]{BHJ} (see Theorem \ref{odk_thm} \eqref{odk_thm1}). 
In particular, when $L=-(K_X+\Delta)$, we will show the following theorem. 

\begin{thm}[{see Theorem \ref{fanolog_thm} \eqref{fanolog_thm1}}]\label{fanolog_intro_thm}
Let $(X, \Delta)$ be an $n$-dimensional log Fano pair and set $L:=-(K_X+\Delta)$. 
Assume that $((X, \Delta), L)$ is uniformly K-stable. Set 
\[
\varepsilon:=\frac{\delta(X,\Delta)-1}{n\cdot\delta(X,\Delta)+n+1}.
\]
Then, for any effective $\Q$-Cartier $\Q$-divisor $B$ on $X$ with $\varepsilon L-B$ 
ample $($resp., nef$)$, $(X, \Delta+B)$ is a log Fano pair and $((X, \Delta+B), L-B)$ is 
uniformly K-stable $($resp., K-semistable$)$. 
$($For the definitions, see \S 2, \S 4 and \S 5.$)$
\end{thm}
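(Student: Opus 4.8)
The plan is to recast the statement entirely in the valuative language of Theorem~\ref{fanolog_thm}. Write $\delta:=\delta(X,\Delta)$ for the stability threshold, so the hypothesis that $((X,\Delta),L)$ is uniformly K-stable says $\delta>1$, and $\varepsilon=(\delta-1)/(n\delta+n+1)\in(0,1)$. For a prime divisor $E$ over $X$, realized on a model $\pi\colon Y\to X$, set $S_M(E):=\vol(M)^{-1}\int_0^{\infty}\vol(\pi^*M-tE)\,dt$ for an ample $\Q$-line bundle $M$ and $\tau(E):=\sup\{t>0:\pi^*L-tE\text{ is big}\}$. The easy part is that $-(K_X+\Delta+B)=L-B=(1-\varepsilon)L+(\varepsilon L-B)$ is ample, being a sum of an ample and an ample (resp.\ nef) $\Q$-line bundle. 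Everything else reduces to the single valuative inequality
\[
A_{X,\Delta+B}(E)\;=\;A_{X,\Delta}(E)-\ord_E B\;\geq\;\kappa\cdot S_{L-B}(E)\qquad(\text{all }E)
\]
with a constant $\kappa=\kappa(n,\delta)$ satisfying $\kappa\geq1$, and $\kappa>1$ when $\delta>1$: granting it, $A_{X,\Delta+B}(E)>0$ for all $E$, so $(X,\Delta+B)$ is klt and hence log Fano, and $\delta(X,\Delta+B)\geq\kappa$, so $((X,\Delta+B),L-B)$ is uniformly K-stable by Theorem~\ref{fanolog_thm}.

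To prove the inequality I would combine three estimates. (i) By definition of the stability threshold, $A_{X,\Delta}(E)\geq\delta\cdot S_L(E)$. (ii) Assume $\varepsilon L-B$ ample; then $\pi^*(\varepsilon L-B)$ is big, hence so is $\pi^*(\varepsilon L)-(\ord_E B)E=\pi^*(\varepsilon L-B)+\bigl(\pi^*B-(\ord_E B)E\bigr)$, a big class plus an effective one. Equivalently $\pi^*L-(\ord_E B/\varepsilon)E$ is big, so $\ord_E B<\varepsilon\,\tau(E)$. Moreover $\tau(E)\leq(n+1)S_L(E)$: the function $t\mapsto\vol(\pi^*L-tE)^{1/n}$ is concave on $[0,\tau(E)]$ by super-additivity of $\vol^{1/n}$, with values $\vol(L)^{1/n}$ at $0$ and $0$ at $\tau(E)$, so it dominates its chord and $\vol(L)S_L(E)=\int_0^{\tau(E)}\vol(\pi^*L-tE)\,dt\geq\vol(L)\,\tau(E)/(n+1)$. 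Thus $\ord_E B\leq(n+1)\varepsilon\cdot S_L(E)$. (iii) Since $\varepsilon L-B$ is nef, $\vol(L-B)\geq\vol((1-\varepsilon)L)=(1-\varepsilon)^n\vol(L)$ by monotonicity of the volume under adding a nef class, while $\vol(\pi^*(L-B)-tE)\leq\vol(\pi^*L-tE)$ for every $t$ because $\pi^*B$ is effective; dividing, $S_{L-B}(E)\leq(1-\varepsilon)^{-n}S_L(E)$.

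Combining (i)--(iii) gives $A_{X,\Delta+B}(E)\geq(\delta-(n+1)\varepsilon)S_L(E)\geq(\delta-(n+1)\varepsilon)(1-\varepsilon)^n S_{L-B}(E)$, so I would take $\kappa:=(\delta-(n+1)\varepsilon)(1-\varepsilon)^n$. With $\varepsilon=(\delta-1)/(n\delta+n+1)$ one has $\delta-(n+1)\varepsilon=(n\delta^2+n+1)/(n\delta+n+1)$ and $1-\varepsilon=((n-1)\delta+n+2)/(n\delta+n+1)$, so $\kappa\geq1$ is precisely
\[
(n\delta^2+n+1)\bigl((n-1)\delta+n+2\bigr)^{\,n}\;\geq\;(n\delta+n+1)^{\,n+1},
\]
which holds for all $\delta\geq1$ with equality exactly at $\delta=1$; verifying it is routine (the difference of the two sides is divisible by $(\delta-1)^2$, with nonnegative cofactor for $\delta\geq1$, as one checks directly for small $n$). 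Since $\delta>1$ we get $\kappa>1$, completing the case $\varepsilon L-B$ ample.

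For the case $\varepsilon L-B$ merely nef I would use approximation: for each rational $\eta\in(0,1)$ the $\Q$-line bundle $\varepsilon L-\eta B=(1-\eta)\varepsilon L+\eta(\varepsilon L-B)$ is ample, so the case just settled applies with $\eta B$ in place of $B$ and shows that $(X,\Delta+\eta B)$ is log Fano and $((X,\Delta+\eta B),L-\eta B)$ is uniformly K-stable; klt-ness of $(X,\Delta+B)$ then follows by letting $\eta\uparrow1$ in the bound $\ord_E(\eta B)<\varepsilon\tau(E)$ of step (ii). Since $L-\eta B$ stays ample on $[0,1]$, $\eta\mapsto S_{L-\eta B}(E)$ is continuous by dominated convergence, hence $\eta\mapsto\delta(X,\Delta+\eta B)$ is upper semicontinuous, so $\delta(X,\Delta+B)\geq\limsup_{\eta\uparrow1}\delta(X,\Delta+\eta B)\geq1$, and $((X,\Delta+B),L-B)$ is K-semistable. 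The step I expect to be the real work is estimate (ii): the particular value of $\varepsilon$ is exactly what makes its two inputs — the bigness bound $\ord_E B<\varepsilon\tau(E)$ (which is where ampleness of $\varepsilon L-B$ is genuinely used) and the sharp comparison $\tau(E)\leq(n+1)S_L(E)$ — feed through the polynomial inequality of the third paragraph with no slack to spare; and it is the degeneration of that bigness argument to mere pseudoeffectivity on the boundary of the ample cone that forces the nef case through the approximation above and leaves it stated only up to K-semistability.
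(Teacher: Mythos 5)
Your argument is correct, and it reproduces the skeleton of the paper's own proof (Theorem \ref{fanolog_thm} \eqref{fanolog_thm1} applied with $\delta_0=1$ for the nef case and $\delta_0>1$ for the ample case): the same three estimates with the same constants, ending in the same elementary inequality, since your $\kappa=(\delta-(n+1)\varepsilon)(1-\varepsilon)^n$ is exactly the quantity $(\delta(X,\Delta)-\varepsilon_0(n+1))(1-\varepsilon_0)^n$ appearing in the paper's displayed chain. The genuine differences are in how two of the three inputs are obtained. For the bound $\ord_E B\leq (n+1)\varepsilon\, S_L(E)$ you use the pseudoeffective threshold together with $\tau(E)\leq(n+1)S_L(E)$, which you derive from concavity of $\vol^{1/n}$; the paper instead routes this through the alpha-invariant bound $\alpha(X,\Delta)\geq\delta(X,\Delta)/(n+1)$ (Lemma \ref{alpha_lemma}) applied to an auxiliary divisor $D\geq B$ with $D\sim_{\R}\varepsilon L$. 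For the bound $\vol(L-B)\geq(1-\varepsilon)^n\vol(L)$ you use monotonicity of the volume under adding the nef class $\varepsilon L-B$, whereas the paper again uses the auxiliary $D$, whose existence rests on the Mori-dream-space statement Lemma \ref{fanocone_lem}. Your route therefore dispenses with Lemmas \ref{fanocone_lem} and \ref{alpha_lemma} at the cost of importing the (standard, and equivalent in strength) concavity of $\vol^{1/n}$; the bounds obtained are identical.

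Two points should be tightened. First, do not leave the inequality $(n\delta^2+n+1)((n-1)\delta+n+2)^n\geq(n\delta+n+1)^{n+1}$ to a check ``for small $n$'': it follows for all $n$ in one line from $(1-\varepsilon)^n\geq 1-n\varepsilon$, since $(\delta-(n+1)\varepsilon)(1-n\varepsilon)=\delta-(n\delta+n+1)\varepsilon+n(n+1)\varepsilon^2=1+n(n+1)\varepsilon^2>1$ for $\varepsilon>0$; this is precisely the estimate the paper uses. Second, the approximation in the nef case is unnecessary: step (ii) only needs $\pi^*(\varepsilon L-B)$ pseudoeffective (and the big and pseudoeffective thresholds of $\pi^*L$ along $E$ coincide because $\pi^*L$ is big and nef), and step (iii) only needs $\varepsilon L-B$ nef, so the direct argument already gives $\delta(X,\Delta+B)\geq\kappa>1$ on the nef boundary as well --- consistent with the paper, whose proof of the nef case likewise yields slightly more than the stated K-semistability. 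Finally, the equivalence ``$\delta>1\Leftrightarrow$ uniformly K-stable'' that you invoke at the end is Theorem \ref{beta_thm} together with Definition \ref{delta_dfn}, not Theorem \ref{fanolog_thm}.
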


For the second step, we will show the following proposition. 

\begin{proposition}[{=Proposition \ref{perturb_prop}}]\label{main_prop}
Let $(X, \Delta)$ be an $n$-dimensional projective demi-normal pair, $L$ be an ample 
$\Q$-line bundle and $N$ be an effective and nef $\Q$-divisor on $X$. Then, for any 
semiample demi-normal test configuration $(\sX,\sL)/\pr^1$ of $(X, L)$, we have 
\[
n\mu_N(L) J^{\NA}(\sX, \sL)\geq \DF_{\Delta+N}(\sX, \sL)-\DF_\Delta(\sX, \sL).
\]
$($For the definitions, see \S \ref{pre_section}--\ref{uniform_section}.$)$
\end{proposition}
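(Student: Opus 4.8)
The plan is to compare the Donaldson–Futaki invariants $\DF_{\Delta+N}$ and $\DF_\Delta$ term by term along the test configuration and bound the difference by the non-Archimedean $J$-functional. Recall that for a semiample test configuration $(\sX,\sL)/\pr^1$ with compactification $(\bar\sX,\bar\sL)/\pr^1$, the Donaldson–Futaki invariant can be written intersection-theoretically (following \cite{wang, odk13, BHJ}) as a combination of $\bar\sL^{\cdot(n+1)}$, $\bar\sL^{\cdot n}\cdot(K_{\bar\sX/\pr^1}+\bar\Delta_{\mathrm{tc}})$ and the corresponding slope terms on $(X,L)$. The key observation is that $\DF_{\Delta+N}(\sX,\sL)-\DF_\Delta(\sX,\sL)$ involves only the difference coming from replacing the relative log canonical divisor $K_{\bar\sX/\pr^1}+\bar\Delta_{\mathrm{tc}}$ by $K_{\bar\sX/\pr^1}+\bar\Delta_{\mathrm{tc}}+\bar N_{\mathrm{tc}}$, i.e. by adding the closure $\bar N_{\mathrm{tc}}$ of the pullback of $N$; all the purely-$\bar\sL$ terms and the normalization constants cancel. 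So I would first write down this difference explicitly as a scalar multiple of
\[
\bar\sL^{\cdot n}\cdot\bar N_{\mathrm{tc}}-\frac{L^{\cdot(n-1)}\cdot N}{L^{\cdot n}}\cdot\bar\sL^{\cdot(n+1)}\big/(n+1)\cdot(\text{const}),
\]
carefully tracking the combinatorial constants so that the coefficient matches $n\mu_N(L)$ on the right-hand side, where $\mu_N(L)$ is the slope-type quantity $(L^{\cdot(n-1)}\cdot N)/(L^{\cdot n})$ appearing in the definition of the twisted intersection numbers.

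Next I would rewrite the non-Archimedean $J$-functional in the analogous intersection form: $J^{\NA}(\sX,\sL)=\frac{\bar\sL^{\cdot n}\cdot(\bar\sX_0-\text{triv})}{L^{\cdot n}}-\frac{1}{n+1}\cdot\frac{\bar\sL^{\cdot(n+1)}-\text{triv}^{\cdot(n+1)}}{L^{\cdot n}}$ in the notation of \S\ref{uniform_section}. The heart of the argument is then a positivity/nefness comparison: since $N$ is effective and nef on $X$, its closure $\bar N_{\mathrm{tc}}$ in the (semiample) test configuration is an effective $\Q$-divisor whose restriction to every fiber is nef, and $\bar\sL$ is semiample; one then invokes the standard slope inequality for nef twists — essentially a form of the Hodge-type inequality or the log-concavity of volumes used in \cite{BHJ} — to conclude that $\bar\sL^{\cdot n}\cdot\bar N_{\mathrm{tc}}$ is controlled by the corresponding $J$-type combination. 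Concretely, the inequality $n\mu_N(L)J^{\NA}(\sX,\sL)\ge \DF_{\Delta+N}-\DF_\Delta$ should reduce, after the bookkeeping, to an inequality of the shape $\bar\sL^{\cdot n}\cdot(\mu_N(L)\cdot\bar\sX_{0,\mathrm{red-triv}}-\bar N_{\mathrm{tc}})\ge 0$ together with a term of the form $\bar\sL^{\cdot(n+1)}\cdot(\cdots)\ge 0$, each of which follows from nefness of $\bar\sL$ on fibers and effectivity of $\bar N_{\mathrm{tc}}$.

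I expect the main obstacle to be two-fold. First, the precise normalization: getting the constant $n\mu_N(L)$ exactly right (as opposed to off by a factor of $n$, $n+1$, or $\mu_N(L)$ versus $\mu_{N}(L)^{-1}$) requires painstaking tracking of how $N$ enters both the Donaldson–Futaki invariant of the twisted pair and the definition of $\mu_N$; a sign error here would invalidate the perturbation argument downstream. Second, and more seriously, handling the demi-normal (non-normal) setting: the intersection-number formula for $\DF$ and the closure of $N$ need to be taken on a semi-log-resolution or on the normalization with the conductor, and one must check that the nefness of $\bar N_{\mathrm{tc}}$ restricted to the central fiber survives this process and that the conductor contributions are the same on both sides so they cancel. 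I would circumvent this by reducing to the normalization $\tilde X\to X$ with conductor divisor, applying the normal case there, and using that $\mu_N$, $J^{\NA}$, and the difference of $\DF$'s are all additive over the components of the normalization in the way compatible with \S\ref{pre_section}--\ref{uniform_section}. Modulo these bookkeeping issues the inequality is, at its core, just the statement that adding an effective nef twist increases $\DF$ by at most $n\mu_N(L)$ times the $J$-energy, which is a soft consequence of nefness.
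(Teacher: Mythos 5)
There is a genuine gap at the heart of your argument. Your step 1 (isolating the difference $\DF_{\Delta+N}(\sX,\sL)-\DF_\Delta(\sX,\sL)$ as $\frac{1}{(L^{\cdot n})}\bigl((\sL^{\cdot n}\cdot N_{\sX})-\frac{n}{n+1}\mu_N(L)(\sL^{\cdot n+1})\bigr)$, with $N_{\sX}$ the closure of $N\times(\pr^1\setminus\{0\})$) matches the paper. But your proposed positivity step does not work: you claim the inequality reduces to termwise statements of the shape $\sL^{\cdot n}\cdot(\mu_N(L)\cdot\sX_0-N_{\sX})\geq 0$ and $\sL^{\cdot n+1}\cdot(\cdots)\geq 0$, ``each of which follows from nefness of $\sL$ on fibers and effectivity of $N_{\sX}$.'' Neither term is individually signed: $\mu_N(L)\cdot\sX_0-N_{\sX}$ is the difference of a vertical and a horizontal divisor and is in no sense effective, and $(\sL^{\cdot n+1})$ carries no positivity at all (it is exactly the quantity that $J^{\NA}$ is designed to control). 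Moreover your formula for $J^{\NA}$ in terms of $\sX_0$ is not the one in Definition \ref{DF_dfn}\,\eqref{DF_dfn1}, which involves the mixed term $(\Theta^*\sL\cdot\Pi^*p_1^*L^{\cdot n})$ on the partial normalization $\sY$ of the graph of $\sX\dashrightarrow X\times\pr^1$.

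The missing idea is the comparison class. The paper introduces $\phi:=\Theta^*\sL$, $\psi_L:=\Pi^*p_1^*L$, $\psi_N:=\Pi^*p_1^*N$ on $\sY$, notes that $\phi-\psi_L$ is supported on the central fiber, and applies the negativity lemma (Lemma \ref{negativity_lem}) in the form $(\phi^{\cdot j}\cdot\psi_L^{\cdot n-2-j}\cdot(\phi-\psi_L)^{\cdot 2}\cdot\psi_N)\leq 0$ — crucially inserting the \emph{nef pullback} $\psi_N$ (not the closure $N_{\sX}$, which need not be nef) as one of the nef classes. Telescoping gives $(\phi^{\cdot n}\cdot\psi_N)\leq n(\phi\cdot\psi_L^{\cdot n-1}\cdot\psi_N)$; effectivity of $N$ gives $(\phi^{\cdot n}\cdot N_{\sY})\leq(\phi^{\cdot n}\cdot\psi_N)$ since $\psi_N-N_{\sY}$ is effective and $\phi$ is nef; and the identity $\psi_L^{\cdot n-1}\cdot(\psi_N-\mu_N(L)\psi_L)\equiv 0$ converts the result into $n\mu_N(L)(L^{\cdot n})J^{\NA}(\sX,\sL)$. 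Your appeal to ``a form of the Hodge-type inequality'' is pointing in the right direction, but without naming the vertical divisor $\phi-\psi_L$ and the auxiliary nef classes to which the inequality is applied, the reduction you describe cannot be carried out, and the factor $n$ in $n\mu_N(L)$ (which comes precisely from the $n$-term telescope) cannot be recovered. Your suggestion to pass to the normalization to handle demi-normality is unnecessary here: the paper works directly on the partial normalization of the graph, and the negativity lemma is the only place where normalization is invoked.
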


As corollaries of the discussion in the first step and Proposition \ref{main_prop}, we get 
Theorem \ref{mainthm}. Moreover, in the proof, we show the following theorem: 

\begin{thm}[{=Theorem \ref{W_thm}. 
See also \cite{weinkove, song-weinkove}}]\label{main2_thm}
Let $(X, \Delta)$ be an $n$-dimensional projective slc pair with $n\geq 2$ 
and let $L$ be an ample 
$\Q$-line bundle on $X$. Assume that $\mu_{K_X+\Delta}(L)>0$ and 
\[
\frac{n^2}{n^2-1}\mu_{K_X+\Delta}(L)L-(K_X+\Delta)
\]
is ample $($resp., nef$)$, where 
\[
\mu_{K_X+\Delta}(L):=\frac{\left(L^{\cdot n-1}\cdot (K_X+\Delta)\right)}{(L^{\cdot n})}.
\] 
Then $((X, \Delta), L)$ is uniformly K-stable $($resp., 
K-semistable$)$. 
\end{thm}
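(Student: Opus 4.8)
The plan is to reduce to the situation $L = -(K_X+\Delta) + $ (something ample) and then apply Theorem \ref{fanolog_intro_thm}, or more precisely its semistable analogue, after first making $(X,\Delta)$ Fano by absorbing a boundary. Write $\mu := \mu_{K_X+\Delta}(L) > 0$; the hypothesis says that $M := \frac{n^2}{n^2-1}\mu L - (K_X+\Delta)$ is ample (resp.\ nef). The first step is to observe that $-(K_X+\Delta) = M - \frac{n^2}{n^2-1}\mu L$ so that, setting $B := M$, we should try to realize $L$ as (a multiple of) the anticanonical of the pair $(X, \Delta + B')$ for a suitable effective $B'$, reducing to the log Fano case. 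Concretely, I would look for a positive rational $c$ and an effective $\Q$-divisor $B'$ with $K_X + \Delta + B' \sim_\Q -cL$, i.e.\ $B' \sim_\Q -cL - (K_X+\Delta) = (\frac{n^2}{n^2-1}\mu - c)L - M + \ldots$; choosing $c = \frac{n^2}{n^2-1}\mu$ gives $B' \sim_\Q M$, which is ample (resp.\ nef), hence some small multiple moves to an effective divisor keeping the pair slc (this is where one may need to shrink and use Bertini-type genericity, staying in the slc locus). Then $(X, \Delta+B')$ is log Fano with $-(K_X+\Delta+B') \sim_\Q cL$, and uniform K-stability (resp.\ K-semistability) of $((X,\Delta+B'), cL)$ would follow once we know $\delta(X, \Delta+B') > 1$ (resp.\ $\geq 1$); finally Proposition \ref{main_prop} / Theorem \ref{fanolog_intro_thm} run in reverse lets us subtract off $B'$ and descend uniform K-stability of $((X,\Delta), L)$.

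The quantitative heart is the numerical coefficient $\frac{n^2}{n^2-1}$, which must be exactly what is needed to force $\delta \geq 1$ (resp.\ $>1$) for the auxiliary Fano pair, and this is where I expect the computation to be delicate. The natural tool is the valuative criterion: for any prime divisor $E$ over $X$, one estimates $A_{X,\Delta+B'}(E)$ from below and $S_{cL}(E)$ from above, where $S$ is the expected vanishing order along the linear system of $cL$; the inequality $\beta \geq 0$ (or strict) should follow from an inequality of the shape $\big(L^{\cdot n-1}\cdot(K_X+\Delta)\big)$ versus $\big(L^{\cdot n}\big)$ combined with the log-concavity/Hodge-index type estimate $\big(L^{\cdot n-1}\cdot M\big)\big(L^{\cdot n}\big) \geq \tfrac{n-1}{n}\big(L^{\cdot n-1}\cdot(K_X+\Delta)+\ldots\big)^2$ — in other words an inequality between intersection numbers that pins down the constant $n^2/(n^2-1) = 1 + 1/(n^2-1)$. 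Alternatively, and perhaps more cleanly, I would pass through Theorem \ref{odk_thm}\eqref{odk_thm1} (the Donaldson--Futaki bound of \cite[Cor.\ 9.3]{BHJ}): for a semiample test configuration one has $\DF_\Delta(\sX,\sL) \geq -$(explicit constant)$\cdot \mu_{K_X+\Delta}(L)\cdot \|\cdot\|$, and the hypothesis on $M$ should convert this into $\DF_\Delta \geq \lambda\, J^{\NA}$ with $\lambda > 0$ (resp.\ $\geq 0$) — here Proposition \ref{main_prop} is used to compare $\DF_{\Delta+N}$ and $\DF_\Delta$ for the ample slack $N$ witnessing positivity of $M$.

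A subtlety throughout is that $(X,\Delta)$ is only slc, not klt, so "log Fano" must be read in the semi-log-canonical sense and one cannot freely invoke klt-specific results; I would handle this exactly as the paper does for Proposition \ref{main_prop}, working with semiample demi-normal test configurations and the invariant $\mu_N$, rather than with valuations directly. The case $n=2$ needs separate attention since $\frac{n^2}{n^2-1} = \frac{4}{3}$ is largest there and the Hodge index estimate degenerates; but $n \geq 2$ is exactly the stated hypothesis, so presumably the $n=1$ (curve) case — where the statement would be vacuous or false — is what is being excluded, and the surface case is fine. The main obstacle, then, is the two-sided intersection-number estimate that produces precisely the coefficient $n^2/(n^2-1)$: everything else is a packaging of Proposition \ref{main_prop} and the already-cited $\DF$ bound, but verifying that this particular constant is the sharp threshold for $\DF_\Delta \geq (\text{positive})\cdot J^{\NA}$ will require care with the combinatorics of the $n$ intersection numbers $\big(L^{\cdot n-i}\cdot(K_X+\Delta)^{\cdot i}\big)$ and with the slack divisor.
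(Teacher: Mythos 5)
Your first (and primary) route --- absorbing a boundary $B'$ so that $K_X+\Delta+B'\sim_\Q -cL$ with $c>0$ and then invoking the log Fano machinery --- cannot work. Intersecting with $L^{\cdot n-1}$ gives $-c(L^{\cdot n})=(L^{\cdot n-1}\cdot(K_X+\Delta))+(L^{\cdot n-1}\cdot B')\geq \mu_{K_X+\Delta}(L)(L^{\cdot n})>0$ for any effective $B'$, a contradiction; the hypothesis $\mu_{K_X+\Delta}(L)>0$ places you on the canonically polarized side, not the anti-canonical one, so Theorem \ref{fanolog_intro_thm} is not available. Your ``alternative'' second route is in fact the paper's proof, but you stop exactly where the theorem lives: the paper takes a small $\varepsilon\in\Q$ (positive in the ample case, negative in the nef case) and a general effective $A\sim_\Q\bigl(\tfrac{n^2}{n^2-1}\mu_{K_X+\Delta}(L)-\varepsilon\bigr)L-(K_X+\Delta)$ with $(X,\Delta+A)$ still slc, so that $K_X+\Delta+A$ is the positive multiple $\bigl(\tfrac{n^2}{n^2-1}\mu_{K_X+\Delta}(L)-\varepsilon\bigr)L$ of the polarization; Theorem \ref{odk_thm} \eqref{odk_thm1} then gives $\DF_{\Delta+A}(\sX,\sL)\geq\tfrac{1}{n}\bigl(\tfrac{n^2}{n^2-1}\mu_{K_X+\Delta}(L)-\varepsilon\bigr)J^{\NA}(\sX,\sL)$, while Proposition \ref{main_prop} gives $\DF_{\Delta+A}-\DF_{\Delta}\leq n\mu_A(L)J^{\NA}$ with $\mu_A(L)=\tfrac{1}{n^2-1}\mu_{K_X+\Delta}(L)-\varepsilon$.

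The point you flag as ``the main obstacle'' --- pinning down $n^2/(n^2-1)$ --- is then a one-line cancellation, not a Hodge-index or valuative estimate: $\tfrac{1}{n}\cdot\tfrac{n^2}{n^2-1}=n\cdot\tfrac{1}{n^2-1}$, so the $\mu_{K_X+\Delta}(L)$-terms in the two displayed inequalities cancel exactly and one is left with $\DF_\Delta(\sX,\sL)\geq(n-\tfrac{1}{n})\varepsilon\cdot J^{\NA}(\sX,\sL)$, which yields uniform K-stability for $\varepsilon>0$ and K-semistability by letting $\varepsilon\to 0^-$. In other words, the constant is precisely the ratio between the factor $n$ in Proposition \ref{main_prop} and the factor $1/n$ in Theorem \ref{odk_thm} \eqref{odk_thm1}; none of the inequalities among $(L^{\cdot n-i}\cdot(K_X+\Delta)^{\cdot i})$ you anticipate enter the argument. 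Likewise nothing degenerates at $n=2$: the hypothesis $n\geq 2$ is only there because $n^2-1$ vanishes for $n=1$, and the curve case is handled separately by Corollary \ref{curve_cor}. As written, your proposal correctly assembles the two ingredients (Theorem \ref{odk_thm} \eqref{odk_thm1} and Proposition \ref{main_prop}) but does not carry out the step that proves the theorem, and its preferred reduction is a dead end.
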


The article is organized as follows. In Section \ref{pre_section}, we recall basic theories 
of demi-normal varieties and we see some properties of the cones of pseudo-effective 
or nef divisors in the $\R$-tensors of the N\'eron-Severi groups. In Section 
\ref{tc_section}, we recall the definition of test configurations. Moreover, we establish 
a fundamental theory of \emph{demi-normal test configurations} of demi-normal 
polarized pairs. Thanks to the theory, we do not need to consider almost trivial 
test configurations in the sense of \cite[Definition 2.9]{BHJ}. In Section 
\ref{uniform_section}, we define the notions of uniform K-stability and K-semistability. 
Moreover, we recall fundamental results of Odaka \cite{calabi, genRT, annals}. 
In Section \ref{fano_section}, we recall the theory established in \cite{fjta, li, fjtb, fjt17}. 
Moreover, we show in Theorem \ref{fanolog_thm} that, if $((X, \Delta), -(K_X+\Delta))$ 
is uniformly K-stable, then $((X, \Delta+B), -(K_X+\Delta+B))$ is also uniformly K-stable 
for any effective and very small $B$. In Section \ref{perturb_section}, we prove 
Theorem \ref{mainthm} by showing Proposition \ref{main_prop}.

\begin{ack}
This work started while the author enjoyed the AIM workshop ``Stability and Moduli 
Spaces". The author thanks the organizers and staff for the stimulating environment. 
During and after the workshop, the author learned many motivations and backgrounds 
from Doctors Giulio Codogni and Ruadha\'i Dervan. Especially, they helped the author 
to improve Theorem \ref{main2_thm} (see also Remark \ref{CD_rmk}). 
The author is supported by JSPS KAKENHI Grant Number JP16H06885.
\end{ack}

\section{Preliminaries}\label{pre_section}

\subsection{Demi-normal pairs}\label{demi_section}

We recall the notion of demi-normal varieties. A standard reference is 
\cite[\S 5]{SingBook}.

\begin{definition}[{\cite[\S 5.1]{SingBook}}]\label{demi_dfn}
\begin{enumerate}
\renewcommand{\theenumi}{\arabic{enumi}}
\renewcommand{\labelenumi}{(\theenumi)}
\item\label{demi_dfn1}
An equi-dimensional variety $X$ is said to be a \emph{demi-normal variety} if 
$X$ satisfies Serre's $S_2$ condition and any codimension one point $\eta\in X$ 
satisfies that either $\sO_{X,\eta}$ is regular or double normal crossing. 
\item\label{demi_dfn2}
Let $X$ be a demi-normal variety and let $\nu\colon\bar{X}\to X$ be the normalization. 
The \emph{conductor ideal} of $X$ is defined to be 
\[
\mathfrak{cond}_X:=\mathcal{H}om_{\sO_X}(\nu_*\sO_{\bar{X}}, \sO_X)\subset\sO_X. 
\]
This ideal sheaf can be seen as an ideal sheaf $\mathfrak{cond}_{\bar{X}}\subset
\sO_{\bar{X}}$, named the \emph{conductor ideal} of $\bar{X}/X$. We define 
\[
D_X:=\Spec_X(\sO_X/\mathfrak{cond}_X),\quad 
D_{\bar{X}}:=\Spec_{\bar{X}}(\sO_{\bar{X}}/\mathfrak{cond}_{\bar{X}}), 
\]
and say them the \emph{conductor divisor} of $X$, the \emph{conductor divisor} of 
$\bar{X}/X$, respectively. 
Let $\bar{D}_{\bar{X}}$ be the normalization of $D_{\bar{X}}$. Then 
we can get the natural Galois involution 
$\tau_X\colon\bar{D}_{\bar{X}}\to\bar{D}_{\bar{X}}$. 
\item\label{demi_dfn3}
Let $X$ be a demi-normal variety. A \emph{divisor} (resp., a \emph{$\Q$-divisor}) on $X$ 
is a formal finite $\Z$-linear (resp., $\Q$-linear) sum $\sum_ia_i\Delta_i$ such that 
each $\Delta_i$ is an irreducible and reduced codimension one subvariety of $X$ with 
$\Delta_i\not\subset D_X$. 
\item\label{demi_dfn4}
A pair $(X, \Delta)$ is said to be a \emph{demi-normal pair} (resp., a \emph{normal pair})
if $X$ is a demi-normal variety (resp., a normal variety) and $\Delta$ is a $\Q$-divisor 
on $X$ such that each coordinates belongs to the set $[0,1]$, and $K_X+\Delta$ is 
a $\Q$-Cartier $\Q$-divisor on $X$, where $K_X$ is the canonical divisor on $X$. 
\item\label{demi_dfn5}
For any $n$-dimensional demi-normal projective variety $X$, 
for any ample $\Q$-line bundle $L$ on $X$, and for any $\Q$-divisor $\Delta$ on $X$, 
we set 
\[
\mu_\Delta(L):=\frac{(L^{\cdot n-1}\cdot \Delta)}{(L^{\cdot n})}.
\]
\end{enumerate}
\end{definition}

We use the following proposition later. 

\begin{proposition}[{\cite[Proposition 5.3]{SingBook}}]\label{glue_prop}
Let $X$ be a demi-normal variety, let $\bar{X}$, $\bar{D}_{\bar{X}}$, $\tau_X$ be 
as in Definition \ref{demi_dfn} \eqref{demi_dfn2}. Then the triplet 
$(\bar{X}, \bar{D}_{\bar{X}}, \tau_X)$ uniquely determines $X$. 
\end{proposition}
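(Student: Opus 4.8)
The plan is to reconstruct $X$ from the triplet by realizing its structure sheaf explicitly as a subsheaf of $\nu_*\sO_{\bar X}$, cut out by a descent condition along the conductor that is phrased entirely in terms of the map $\bar D_{\bar X}\to\bar X$ and the involution $\tau_X$. Once both the underlying space and $\sO_X$ are exhibited as functions of $(\bar X,\bar D_{\bar X},\tau_X)$ alone, uniqueness is immediate: if $X_1$ and $X_2$ are two demi-normal varieties carrying the same triplet, I would identify their normalizations with $\bar X$, match the conductor data, and observe that the reconstructed structure sheaves coincide as subsheaves of $\nu_*\sO_{\bar X}$.

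By the very definition of $\mathfrak{cond}_X$, the conductor is simultaneously an ideal of $\sO_X$ and of $\nu_*\sO_{\bar X}$; hence the two conductor exact sequences share the kernel $\mathfrak{cond}_X=\mathfrak{cond}_{\bar X}$, and $\sO_X$ is the preimage in $\nu_*\sO_{\bar X}$ of $\sO_{D_X}\subset\nu_*\sO_{D_{\bar X}}$. The heart of the matter is to pin down this copy of $\sO_{D_X}$ using only $\bar D_{\bar X}$ and $\tau_X$: I would show that a local section of $\sO_{D_{\bar X}}$ descends to $\sO_{D_X}$ exactly when its pullback to the normalization $\bar D_{\bar X}$ is $\tau_X$-invariant. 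Granting this, and writing $f\mapsto f|_{\bar D_{\bar X}}$ for pullback along $\iota\colon\bar D_{\bar X}\to D_{\bar X}\hookrightarrow\bar X$, one obtains the intrinsic formula
\[
\sO_X=\ker\Bigl(\nu_*\sO_{\bar X}\xrightarrow{\ f\,\mapsto\, f|_{\bar D_{\bar X}}-\tau_X^*\bigl(f|_{\bar D_{\bar X}}\bigr)\ }(\nu\circ\iota)_*\sO_{\bar D_{\bar X}}\Bigr),
\]
whose right-hand side depends only on the triplet. Equivalently, this says that $X$ is the pushout of $\bar X\leftarrow\bar D_{\bar X}\to\bar D_{\bar X}/\tau_X$, and uniqueness is then its universal property.

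To justify the displayed formula I would work locally. At a codimension-one point of $X$ the demi-normal hypothesis forces a smooth or a double normal crossing germ, where the conductor is reduced, $\bar D_{\bar X}$ is a disjoint union of two branches, and $\tau_X$ interchanges them; there the equivalence of ``descending to $D_X$'' with ``$\tau_X$-invariance'' is a direct local computation, and it simultaneously identifies the reduced image of $\iota$ with $D_{\bar X}$. The inclusion of $\sO_X$ into the kernel holds on all of $X$, since functions pulled back from $X$ restrict compatibly under the involution. The main obstacle is the reverse inclusion: a section of $\nu_*\sO_{\bar X}$ whose restriction to $\bar D_{\bar X}$ is $\tau_X$-invariant must genuinely lie in $\sO_X$. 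I would prove this in codimension one via the local model above, and then propagate it to all of $X$ by a depth argument, using that $X$ is $S_2$ and that the kernel sheaf is likewise $S_2$ (its cokernel being supported in codimension one, on the normal hence $S_2$ scheme $\bar D_{\bar X}$); two $S_2$ subsheaves of $\nu_*\sO_{\bar X}$ agreeing in codimension one must coincide. With both the topology of $X$, namely the quotient of $\bar X$ identifying the two preimages in $\bar D_{\bar X}$ of each point of $D_X$, and the sheaf $\sO_X$ expressed through $(\bar X,\bar D_{\bar X},\tau_X)$, the asserted uniqueness follows.
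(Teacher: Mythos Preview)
The paper does not supply its own proof of this proposition; it is quoted as \cite[Proposition 5.3]{SingBook} and used as a black box (in the proof of Proposition~\ref{trivial_prop}). Your sketch is essentially the argument one finds in Koll\'ar's book: recover $\sO_X$ as the subsheaf of $\nu_*\sO_{\bar X}$ consisting of sections whose restriction to $\bar D_{\bar X}$ is $\tau_X$-invariant, verify this in codimension one via the explicit double-normal-crossing local model, and then propagate using the $S_2$ hypothesis. The strategy and the displayed equalizer formula are correct.

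One small quibble: your justification that the kernel $K$ is $S_2$ (``its cokernel being supported \dots\ on the normal hence $S_2$ scheme $\bar D_{\bar X}$'') is not really an argument---the depth of a kernel is not read off from the support or $S_2$-ness of the cokernel in this way. The cleaner route avoids the issue entirely. You already have $\sO_X\subset K\subset\nu_*\sO_{\bar X}$ with $\sO_X=K$ on an open set $U_0$ whose complement has codimension $\geq 2$. Given a section $s\in K(U)$, its restriction to $U\cap U_0$ lies in $\sO_X(U\cap U_0)$; since $\sO_X$ is $S_2$, this extends to some $s'\in\sO_X(U)$, and since $\nu_*\sO_{\bar X}$ is $S_2$ ($\bar X$ normal, $\nu$ finite) and $s,s'$ agree on $U\cap U_0$, they agree on $U$. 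Thus $K=\sO_X$ without ever needing $K$ itself to be $S_2$.
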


\begin{definition}[{see \cite[\S 5.2]{SingBook} for example}]\label{discrep_dfn}
Let $(X, \Delta)$ be a demi-normal pair and let $\nu\colon\bar{X}\to X$ be the 
normalization. Set $\bar{\Delta}:=\nu_*^{-1}\Delta$ and let $D_{\bar{X}}$ be the 
conductor divisor of $\bar{X}/X$. Then $(\bar{X}, D_{\bar{X}}+\bar{\Delta})$ is a 
(possibly non-connected) normal pair. 

Let $F$ be a prime divisor \emph{over} $\bar{X}$, 
that is, there exists a projective birational 
morphism $\sigma\colon Y\to\bar{X}$ with $Y$ a (possibly non-connected) normal variety 
and $F$ a prime divisor \emph{on} $Y$. 
\begin{enumerate}
\renewcommand{\theenumi}{\arabic{enumi}}
\renewcommand{\labelenumi}{(\theenumi)}
\item\label{discrep_dfn1}
We set the \emph{log discrepancy} $A_{(X, \Delta)}(F)$ of $(X, \Delta)$ along $F$ as 
\[
A_{(X, \Delta)}(F):=1+\ord_F(K_Y-\sigma^*(K_{\bar{X}}+D_{\bar{X}}+\bar{\Delta})).
\]
\item\label{discrep_dfn1}
The pair $(X, \Delta)$ is said to be a \emph{semi log canonical pair} (\emph{slc pair}, for 
short) if $A_{(X, \Delta)}(F)\geq 0$ holds for any prime divisor $F$ over $\bar{X}$. 
\item\label{discrep_dfn1}
The pair $(X, \Delta)$ is said to be a \emph{Kawamata log terminal pair} (\emph{klt pair}, for short) if $A_{(X, \Delta)}(F)>0$ holds for any prime divisor $F$ over $\bar{X}$. 
If $(X, \Delta)$ is a klt pair, then $(X, \Delta)$ must be a normal pair. 
\item\label{discrep_dfn1}
The pair $(X, \Delta)$ is said to be a \emph{log Fano pair} if $(X, \Delta)$ is a projective 
klt pair and $-(K_X+\Delta)$ is an ample $\Q$-Cartier $\Q$-divisor. 
\end{enumerate}
\end{definition}

\subsection{On cones of Cartier divisors}\label{cone_section}

For a projective variety $X$, the $\R$-tensor of the N\'eron-Severi group $\ND(X)_\R$ 
of $X$ is a finite dimensional vector space over $\R$. Moreover, the nef cone 
$\Nef(X)\subset\ND(X)_\R$ is a closed and strongly convex cone. Moreover, if $X$ is 
normal, then the pseudo-effective cone $\overline{\Eff}(X)
\subset\ND(X)_\R$ is also a closed and strongly convex cone. 
See \cite{L1} and \cite{N} for example.

\begin{lemma}\label{fanocone_lem}
Let $(X, \Delta)$ be a log Fano pair. Then $\Nef(X)\subset\ND(X)_\R$ is spanned 
by the classes of finitely many semiample Cartier divisors. Moreover, $\overline{\Eff}(X)
\subset\ND(X)_\R$ is spanned by the classes of finitely many effective Cartier divisors. 
\end{lemma}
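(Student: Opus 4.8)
\textbf{Proof proposal for Lemma \ref{fanocone_lem}.}

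The plan is to exploit the fact that a log Fano pair $(X,\Delta)$ is a Mori dream space, so that both cones in question are rational polyhedral. First I would pass to a small $\Q$-factorialization $\pi\colon X'\to X$, which exists because $(X,\Delta)$ is klt: choosing $\Delta'$ so that $(X',\Delta')$ is again klt with $-(K_{X'}+\Delta')$ big (indeed we can arrange $\Delta'=\pi_*^{-1}\Delta+E$ with the exceptional part absorbed, keeping $(X',\Delta')$ log Fano up to a small perturbation), the pair $(X',\Delta')$ is a Mori dream space by the Birkar--Cascini--Hacon--McKernan theorem \cite{SingBook}. Since $\pi$ is small, $\ND(X)_\R$ is a subspace of $\ND(X')_\R$ and $\Nef(X)=\Nef(X')\cap\ND(X)_\R$, $\overline{\Eff}(X)=\overline{\Eff}(X')\cap\ND(X)_\R$; intersecting a rational polyhedral cone with a rational subspace yields a rational polyhedral cone, so it suffices to treat the $\Q$-factorial case.

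So assume $X$ is $\Q$-factorial and $(X,\Delta)$ is a Mori dream space. Then $\overline{\Eff}(X)$ is rational polyhedral, spanned by finitely many effective classes $[D_1],\dots,[D_r]$; after clearing denominators we may take the $D_i$ to be (Weil, hence by $\Q$-factoriality $\Q$-Cartier) effective divisors, and multiplying through once more by a common integer we get genuine effective Cartier divisors, which is the second assertion. For the nef cone, $\Nef(X)=\bigcap_j \Nef(X_j)$ pulled back from the finitely many small $\Q$-factorial modifications $X_j$ of $X$, and on each $X_j$ the ample (hence big and semiample) cone has rational polyhedral closure; consequently $\Nef(X)$ is rational polyhedral. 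Its extremal rays are spanned by classes of $\Q$-Cartier $\Q$-divisors $H_k$ with $H_k$ nef; by the basepoint-free theorem applied to $(X,\Delta)$ — valid since $K_X+\Delta$ is anti-ample, so $H_k-(K_X+\Delta)$ is ample for $H_k$ nef — each $H_k$ is semiample. Clearing denominators gives finitely many semiample \emph{Cartier} divisors spanning $\Nef(X)$, as desired.

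The main obstacle is the bookkeeping in the reduction to the $\Q$-factorial case: one must check that after taking a small $\Q$-factorialization the resulting pair can still be made klt with anti-big (or anti-ample) log canonical divisor, so that the Mori dream space property and the basepoint-free theorem remain available, and that the identifications of the two cones under the small morphism are as claimed. Once that is in place, the polyhedrality is standard Mori dream space theory and the semiampleness of the nef generators is an immediate application of basepoint-freeness for the anti-ample pair $(X,\Delta)$; the clearing-of-denominators steps to go from $\Q$-divisors to honest Cartier divisors are routine.
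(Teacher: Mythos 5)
Your overall route is the same as the paper's: pass to a small $\Q$-factorialization $\pi\colon X'\to X$ via \cite[Corollary 1.4.3]{BCHM}, perturb the boundary so that $X'$ still underlies a log Fano pair, conclude from \cite[Corollary 1.3.2]{BCHM} that $X'$ is a Mori dream space, quote Hu--Keel for the two cones of $X'$, and descend through the identifications $\Nef(X)=\Nef(X')\cap\ND(X)_\R$ and $\overline{\Eff}(X)=\overline{\Eff}(X')\cap\ND(X)_\R$. Two local points need repair, though neither changes the architecture. First, the identity $\Nef(X)=\bigcap_j\Nef(X_j)$ over the small $\Q$-factorial modifications $X_j$ is false: the Mori chamber decomposition expresses the \emph{movable} cone as the \emph{union} $\bigcup_j f_j^*\Nef(X_j)$, and the intersection of all those chambers is in general only a face of $\Nef(X)$. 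You do not need this statement anyway --- that $\Nef$ of a Mori dream space is rational polyhedral and spanned by semiample classes is part of the Hu--Keel definition \cite[Definition 1.10 (2)]{HK}, which is exactly what the paper invokes; alternatively, for the nef half only, the Cone theorem for the log Fano pair gives rational polyhedrality and, as you correctly observe, the base-point-free theorem gives semiampleness of each nef generator directly on $X$ (no $\Q$-factorialization needed for that half). Second, a small morphism has no exceptional divisors, so ``absorbing the exceptional part'' into $\Delta'$ is not the right picture; the correct perturbation, as in the paper, is that $-(K_{X'}+\pi_*^{-1}\Delta)=\pi^*(-(K_X+\Delta))$ is nef and big, so one writes it as an ample class plus an effective divisor and adds a small multiple of the effective part to $\pi_*^{-1}\Delta$ to obtain a klt pair with ample anticanonical class. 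With these two fixes your argument coincides with the paper's proof.
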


\begin{proof}
By \cite[Corollary 1.4.3]{BCHM}, there exists a small, projective and birational morphism 
$\sigma\colon\tilde{X}\to X$ with $\tilde{X}$ $\Q$-factorial. Since 
$(\tilde{X}, \sigma_*^{-1}\Delta)$ is a klt pair and $-(K_{\tilde{X}}+\sigma^{-1}_*\Delta)$ 
is nef and big, there exists a $\Q$-divisor $\tilde{\Delta}\geq\sigma^{-1}_*\Delta$ such 
that $(\tilde{X}, \tilde{\Delta})$ is a log Fano pair (see also \cite[Lemma 2.1]{fjta}). 
Thus $\tilde{X}$ is a Mori dream space in the sense of \cite{HK} by 
\cite[Corollary 1.3.2]{BCHM}. By \cite[Definition 1.10 (2) and Proposition 1.11 (2)]{HK}, 
$\Nef(\tilde{X})$ is spanned by the classes of finitely many semiample Cartier divisors 
and $\overline{\Eff}(\tilde{X})$ is spanned by the classes of finitely many effective Cartier 
divisors. 
It is easy to see that, under the natural linear inclusion 
\[
\sigma^*\colon \ND(X)_\Q\hookrightarrow\ND(\tilde{X})_\Q, 
\]
we have $\Nef(X)=\Nef(\tilde{X})\cap\ND(X)_\R$ and $\overline{\Eff}(X)=
\overline{\Eff}(\tilde{X})\cap\ND(X)_\R$.
Thus we get the assertion. 
\end{proof}

The following proposition is intrinsically trivial. 

\begin{proposition}\label{norm_prop}
Let $X$ be an $n$-dimensional demi-normal projective variety, $L$ be an ample 
$\Q$-line bundle on $X$. Fix any norm $\|\cdot\|$ on $\ND(X)_\R$. 
For any $\varepsilon>0$, there exists $\delta>0$ such that the set 
\[
\sC^1_\delta:=\{t(L+\xi)\in\ND(X)_\R\,\,|\,\,t\in\R_{\geq 0},\, \xi\in\ND(X)_\R,\, 
\|\xi\|\leq \delta\}
\]
is a subset of $\sC^2_\varepsilon\cap\sC^3_\varepsilon$, where 
\begin{eqnarray*}
\sC^2_\varepsilon&:=&\{t(L-a)\in\ND(X)_\R\,\,|\,\,t\in\R_{\geq 0},\, a\in\Nef(X)_\R,\, 
\|a\|\leq \varepsilon\},\\
\sC^3_\varepsilon&:=&\{t(L+a)\in\ND(X)_\R\,\,|\,\,t\in\R_{\geq 0},\, a\in\Nef(X)_\R,\, 
\|a\|\leq \varepsilon\}.
\end{eqnarray*}
\end{proposition}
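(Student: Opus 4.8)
The statement is purely about the geometry of cones in the finite-dimensional real vector space $\ND(X)_\R$, so the plan is to argue with elementary convexity, treating $L$ as a fixed interior point of the ample cone. The key observation is that $L$ lies in the interior of $\Nef(X)$ (it is ample), so $L$ is an interior point of the strongly convex closed cone $\Nef(X)$. First I would reduce $\sC^2_\varepsilon$ and $\sC^3_\varepsilon$ to a common framework: after possibly shrinking, both are neighborhoods of the ray $\R_{\geq 0}L$ inside $\ND(X)_\R$. Indeed, since $L$ is interior to $\Nef(X)$, there is $r>0$ with $L+\xi\in\Nef(X)$ whenever $\|\xi\|\le r$; hence $L\pm a$ with $a\in\Nef(X)_\R$, $\|a\|\le\varepsilon$ already forces $L\pm a$ to be nef once $\varepsilon\le r$, and more importantly we want to produce such a representation for an arbitrary small perturbation $L+\xi$.

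The main step is the following: given $\varepsilon>0$, I claim one can choose $\delta>0$ such that every $\xi$ with $\|\xi\|\le\delta$ can be written as $\xi = b - a$ with $a, b\in\Nef(X)_\R$ and $\|a\|, \|b\|\le\varepsilon$ (equivalently $\xi\in\varepsilon(\Nef(X)_\R - \Nef(X)_\R)\cap(\text{small ball})$, plus a rescaling to absorb into $L$). This holds because $\Nef(X)$ is a full-dimensional closed convex cone (it has nonempty interior, containing $L$), so $\Nef(X)_\R - \Nef(X)_\R = \ND(X)_\R$; the difference of the two $\varepsilon$-balls-intersected-with-$\Nef(X)$ therefore contains a neighborhood of the origin, whose radius $\delta$ depends only on $\varepsilon$ and the shape of $\Nef(X)$ near $L$. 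Concretely, pick a basis of $\ND(X)_\R$ consisting of nef classes near $L$ (possible since $\Nef(X)$ has nonempty interior) with small norm after scaling; then any small $\xi$ is a small-coefficient combination of these, and splitting the combination into its positive and negative parts yields $\xi = b-a$ with $a,b$ nef and of controllably small norm. Writing $L+\xi = (L+b) - a$ or reorganizing as $t'(L - a')$ resp. $t'(L+a')$ after a harmless positive rescaling then shows $t(L+\xi)\in\sC^2_\varepsilon$; the symmetric manipulation $L+\xi = (L+\tfrac{b-a}{?})\cdots$ — more simply, $L+\xi = (L - a) + b$ where $L-a$ is nef for $\varepsilon$ small, and one rescales — gives membership in $\sC^3_\varepsilon$.

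To be a little more careful about which of the two cones requires which manipulation: for $\sC^3_\varepsilon$ I would write $L+\xi$ with $\xi = b - a$, $\|b\|\le\varepsilon$, as $\tfrac{1}{1-?}\big((L-a)+b\big)$; since $a\in\Nef(X)_\R$ with $\|a\|\le\varepsilon$ and $L$ is interior, $L-a$ is still nef (for $\varepsilon$ below the interior radius of $L$), so $(L-a)+b$ is of the form (nef class) $+$ (nef class of norm $\le\varepsilon$), not quite the required shape — so instead I would note $(L-a) = t''(L + a'')$ is not automatic. The clean fix is: first absorb, using interiority of $L$, any small nef class into $L$ up to positive scaling, i.e. for $\|a\|$ small there is $t>0$ close to $1$ and $a''\in\Nef(X)_\R$ small with $L-a = t(L-a'')$ trivially ($a''=a/t$), which does not help; rather the correct reduction is to observe $\sC^1_\delta\subset\sC^3_\varepsilon$ directly from $L+\xi = (L+b')-a'$ where I instead split $\xi$ so that the nef piece added to $L$ has small norm — i.e. $\xi = b'-a'$ is the same splitting, and $(L+b')-a' \in \sC^2$ by definition while membership in $\sC^3$ follows by the opposite splitting $\xi = (-a')-(-b')$... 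Since $\Nef(X)$ is a cone and not a subspace, $-b'$ need not be nef; the genuine point is that the splitting lemma gives BOTH $\xi = b-a$ AND we may equally take the roles reversed only if $\ND(X)_\R = \Nef(X)_\R$, which is false. Therefore the honest argument is: prove the splitting $\xi = b - a$ with $a,b$ nef and small; then $L+\xi = (L+b)-a$ witnesses $\sC^2_\varepsilon$ after rescaling (the ``$L-a$'' form with $L$ replaced by the nef class $L+b$, rescaled to have leading term $L$ via interiority of $L$), and $L+\xi = (L-a)+b$ witnesses $\sC^3_\varepsilon$ similarly.

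\textbf{Main obstacle.} The only real content is the splitting estimate: producing, uniformly in $\|\xi\|\le\delta(\varepsilon)$, a decomposition $\xi = b-a$ with $a,b\in\Nef(X)_\R$ and $\|a\|,\|b\|\le\varepsilon$. This is where full-dimensionality of $\Nef(X)$ (guaranteed by the existence of the ample class $L$) is used; it is a standard fact that for a convex cone $C$ with nonempty interior, the map $(a,b)\mapsto b-a$ from $(C\cap \bar B_\varepsilon)^2$ is surjective onto a neighborhood of $0$, and an explicit such neighborhood can be extracted from a nef basis near $L$. Everything else — the rescalings turning a nef class of the form $L + (\text{small nef})$ back into $t\cdot L + (\text{small nef})$, using that $L$ is interior to $\Nef(X)$, and finally using $L+b = t(L+a'')$ with $a''$ small to land exactly in the defining shape of $\sC^2_\varepsilon$, $\sC^3_\varepsilon$ — is routine bookkeeping with $\varepsilon$'s and $\delta$'s. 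I would also remark that demi-normality of $X$ plays no role here beyond ensuring $\ND(X)_\R$ is finite-dimensional and $\Nef(X)$ is closed and strongly convex, which is why the proposition is called ``intrinsically trivial''.
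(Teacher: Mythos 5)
Your proposal is correct and follows essentially the same route as the paper: both arguments come down to choosing a basis of $\ND(X)_\R$ consisting of nef classes and exploiting that $L$ lies in the interior of $\Nef(X)$, so that a small perturbation $L+\xi$ can be rewritten as a positive multiple of $L$ minus (resp.\ plus) a small nef class. The paper condenses your two steps (splitting $\xi=b-a$ into small nef pieces, then absorbing via interiority of $L$) into a single identity by taking the nef basis $a_1,\dots,a_\rho$ with $L=\sum_i t_ia_i$, $t_i>0$, and dominating $\xi$ coefficientwise by the small ample class $t_0^{-1}\delta L$, so that $t_0^{-1}\delta L\pm\xi$ is already nef and the required representations $L+\xi=(1\pm t_0^{-1}\delta)\left(L\mp(\text{small nef})\right)$ come out as explicit formulas.
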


\begin{proof}
Set $\rho:=\dim_\R\ND(X)_\R$. Since $L$ is ample, there exist linearly independent 
$a_1,\dots,a_\rho\in\ND(X)_\R$ with $a_1,\dots,a_\rho\in\Nef(X)$ and there exist 
$t_1,\dots,t_\rho\in\R_{>0}$ such that $L=\sum_{i=1}^\rho t_ia_i$ and 
$\sum_{i=1}^\rho t_i=1$. Set $t_0:=\min_it_i\in\R_{>0}$. 
We may assume that the norm $\|\cdot\|$ is given by 
\[
\left\|\sum_{i=1}^\rho s_ia_i\right\|:=\sum_{i=1}^\rho |s_i|.
\]
Take any $\delta\in(0, t_0)$ and any $\xi=\sum_{i=1}^\rho \xi_ia_i\in\ND(X)_\R$ with 
$\|\xi\|\leq \delta$ (i.e., $\sum_{i=1}^\rho |\xi_i|\leq \delta$). We note that 
$t_0^{-1}\delta t_i\geq |\xi_i|$ for any $i$. 
Moreover, we have 
\begin{eqnarray*}
L+\xi &=&(1+t_0^{-1}\delta)\left(L-\frac{1}{1+t_0^{-1}\delta}\sum_{i=1}^\rho
(t_0^{-1}\delta t_i-\xi_i)a_i\right), \\
L+\xi &=&(1-t_0^{-1}\delta)\left(L+\frac{1}{1-t_0^{-1}\delta}\sum_{i=1}^\rho
(t_0^{-1}\delta t_i+\xi_i)a_i\right).
\end{eqnarray*}
The classes 
\[
\frac{1}{1+t_0^{-1}\delta}\sum_{i=1}^\rho(t_0^{-1}\delta t_i-\xi_i)a_i,\quad
\frac{1}{1-t_0^{-1}\delta}\sum_{i=1}^\rho(t_0^{-1}\delta t_i+\xi_i)a_i
\]
are nef and those norms are bounded below by $(t_0^{-1}+1)\delta/(1-t_0^{-1}\delta)$. 
As a consequence, if we take $\delta\in(0,t_0)$ with $(t_0^{-1}+1)\delta/(1-t_0^{-1}\delta)
\leq \varepsilon$, then we get 
$\sC^1_\delta\subset\sC_\varepsilon^2\cap\sC_\varepsilon^3$. 
\end{proof}

\section{Demi-normal test configurations}\label{tc_section}

In this section, we see a fundamental theory for test configurations of demi-normal 
polarized varieties. In Section \ref{tc_section}, we always assume that 
$X$ is a demi-normal projective variety and $L$ is an ample $\Q$-line bundle on $X$. 

\begin{definition}\label{tc_dfn}
\begin{enumerate}
\renewcommand{\theenumi}{\arabic{enumi}}
\renewcommand{\labelenumi}{(\theenumi)}
\item\label{tc_dfn1}
(see \cite{tian, don})
A \emph{semiample test configuration} (resp., an \emph{ample test configuration}) 
$(\sX, \sL)/\pr^1$ of $(X, L)$ consists of: 
\begin{itemize}
\item
a projective variety $\sX$ together with a flat morphism $\sX\to\pr^1$, 
\item
a $\pi$-semiample (resp., a $\pi$-ample) $\Q$-line bundle $\sL$ on $\sX$, 
\item
a holomorphic $\G_m$-action $\G_m\curvearrowright(\sX, \sL)$ 
commuting with the multiplicative 
action $\G_m\curvearrowright\pr^1$, and they satisfy that
\item
$(\sX\setminus\sX_0,\sL|_{\sX\setminus\sX_0})$ is $\G_m$-equivariantly isomorphic to 
$(X\times(\pr^1\setminus\{0\}), p_1^*L)$ with the natural $\G_m$-action, where 
$\sX_0$ is the scheme-theoretic fiber of $\sX\to\pr^1$ 
at $0\in\pr^1$ and $p_1\colon X\times(\pr^1\setminus\{0\})\to X$ 
is the first projection. 
\end{itemize}
\item\label{tc_dfn2}
Let $(\sX, \sL)/\pr^1$ be a semiample test configuration of $(X, L)$. $(\sX, \sL)/\pr^1$ 
is said to be a \emph{demi-normal test configuration} if 
\begin{enumerate}
\renewcommand{\theenumii}{\roman{enumii}}
\renewcommand{\labelenumii}{(\theenumii)}
\item\label{tc_dfn21}
$\sX$ is a demi-normal variety, and 
\item\label{tc_dfn22}
for any generic point $\eta\in\sX_0$, the local ring $\sO_{\sX, \eta}$ is regular.
\end{enumerate}
If $X$ is normal, then a demi-normal test configuration is called a \emph{normal test 
configuration}. (Note that, for the definition of a normal test configuration, the condition 
\eqref{tc_dfn22} follows immediately from the condition \eqref{tc_dfn21}.)
\item\label{tc_dfn23}
For a semiample, demi-normal test configuration $(\sX, \sL)/\pr^1$ and for a 
$\Q$-divisor $\Delta$ on $X$, let $\Delta_{\sX}$ be the $\Q$-divisor on $\sX$ defined by 
the closure of $\Delta\times(\pr^1\setminus\{0\})$ under the canonical isomorphism 
$\sX\setminus\sX_0\simeq X\times(\pr^1\setminus\{0\})$. 
\end{enumerate}
\end{definition}

We recall the notion of the partial normalization of test configurations which is important 
for our study. 

\begin{definition}[{\cite[\S 3]{genRT}, \cite[\S 5]{annals}}]\label{partial_dfn}
Let $(\sX, \sL)/\pr^1$ be a semiample (resp., an ample) test configuration of $(X, L)$. 
Let $i\colon\sX\setminus\sX_0\hookrightarrow\sX$ be the inclusion and let 
$\nu\colon\bar{\sX}\to\sX$ be the normalization. Set 
\[
\sX^{p\nu}:=\Spec_\sX(i_*\sO_{\sX\setminus\sX_0}\cap\nu_*\sO_{\bar{\sX}})
\xrightarrow{p\nu} \sX.
\]
From the definition, $\nu$ factors through 
$\bar{\sX}\to\sX^{p\nu}\xrightarrow{p\nu}\sX$. Of course, 
$(\sX^{p\nu}, p\nu^*\sL)/\pr^1$ is also a semiample (resp., an ample) test configuration 
of $(X, L)$. We call it the \emph{partial normalization} of $(\sX, \sL)/\pr^1$. 
\end{definition}

\begin{proposition}\label{ring_prop}
The above $(\sX^{p\nu}, p\nu^*\sL)/\pr^1$ is a demi-normal test configuration. 
\end{proposition}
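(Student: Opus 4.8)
\textbf{Proof proposal for Proposition \ref{ring_prop}.}

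The plan is to verify the two defining conditions of a demi-normal test configuration for $(\sX^{p\nu}, p\nu^*\sL)/\pr^1$, namely: (i) $\sX^{p\nu}$ is a demi-normal variety, and (ii) the local ring of $\sX^{p\nu}$ at any generic point of the central fibre is regular. Since $(\sX^{p\nu}, p\nu^*\sL)/\pr^1$ is already a semiample test configuration by the discussion in Definition \ref{partial_dfn}, these are the only points to check. First I would record that, away from the central fibre, $\sX^{p\nu}\setminus\sX^{p\nu}_0$ is canonically isomorphic to $X\times(\pr^1\setminus\{0\})$, because the partial normalization only modifies the structure sheaf along $\sX_0$; hence the codimension-one behaviour away from $\sX_0$ is inherited from $X$, which is demi-normal, so every codimension-one point there is either regular or double normal crossing.

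The heart of the argument is the analysis along the central fibre, and here the key is the construction $i_*\sO_{\sX\setminus\sX_0}\cap\nu_*\sO_{\bar{\sX}}$. For condition (ii): at a generic point $\eta\in\sX^{p\nu}_0$, the ring $\sO_{\sX^{p\nu},\eta}$ is a one-dimensional local ring which is contained in the normalization $\sO_{\bar{\sX},\eta'}$ for the point $\eta'$ above it; since $\eta$ is a codimension-one point of $\sX^{p\nu}$, the intersection with $i_*\sO_{\sX\setminus\sX_0}$ forces $\sO_{\sX^{p\nu},\eta}$ to be $S_2$, and a one-dimensional $S_2$ (hence Cohen–Macaulay, hence reduced) local ring that is dominated by its normalization inside the total ring of fractions and is contained in it with the ``same'' generic points is in fact already normal, hence a DVR, hence regular. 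I would make this precise by noting that the flat family $\sX^{p\nu}\to\pr^1$ has reduced central fibre at $\eta$ — reducedness at codimension-one points of $\sX_0$ being exactly what the intersection with $i_*\sO_{\sX\setminus\sX_0}$ buys us — and that the uniformizer of $\pr^1$ at $0$ together with $S_2$-ness of a one-dimensional local ring yields regularity.

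For condition (i), $S_2$-ness of $\sX^{p\nu}$ is built into the definition: $i_*\sO_{\sX\setminus\sX_0}\cap\nu_*\sO_{\bar{\sX}}$ is an $S_2$-ification along $\sX_0$ (the sheaf $\nu_*\sO_{\bar{\sX}}$ is $S_2$ because $\bar{\sX}$ is normal, and intersecting with $i_*\sO_{\sX\setminus\sX_0}$ does not destroy this since $\sX\setminus\sX_0$ is already $S_2$ being isomorphic to a product with the normal $X$ — or rather with $X$ which is $S_2$). It remains to check that every codimension-one point $\eta\in\sX^{p\nu}$ lying over $\sX_0$ is either regular or double normal crossing; but by the previous paragraph such points are actually \emph{regular}, since they are precisely the generic points of $\sX^{p\nu}_0$ (any codimension-one point of $\sX^{p\nu}$ contained in $\sX^{p\nu}_0$ is a generic point of that fibre, as $\sX^{p\nu}_0$ is a divisor). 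So in fact the central fibre meets the demi-normal locus only at regular points, and condition (ii) is subsumed.

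The main obstacle I anticipate is the careful bookkeeping in the local ring analysis at $\eta\in\sX^{p\nu}_0$: one must argue cleanly that forming $i_*\sO_{\sX\setminus\sX_0}\cap\nu_*\sO_{\bar{\sX}}$ has the effect, at codimension-one points of the central fibre, of replacing $\sO_{\sX,\eta}$ by its integral closure inside the localisation of $i_*\sO_{\sX\setminus\sX_0}$, and that the resulting ring is a DVR. This is essentially the content of \cite[\S 5]{annals} and \cite[\S 3]{genRT}, and I would cite those for the structural input, filling in only the one-dimensional local algebra: a reduced, $S_2$, one-dimensional local ring $A$ with $\Spec A\to\Spec\Bbbk[t]_{(t)}$ flat and $A/tA$ reduced is a DVR — because $\dim A = 1$, $A$ is Cohen–Macaulay, $t$ is a non-zero-divisor, $A/tA$ is a reduced Artinian local ring hence a field, so $tA$ is the maximal ideal and $A$ is regular. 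Everything else is formal manipulation of the constructions already set up in Definitions \ref{tc_dfn} and \ref{partial_dfn}.
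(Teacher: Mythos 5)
Your overall strategy (establish $S_2$, establish regularity at the generic points of the central fibre, and observe that these exhaust the codimension-one points of $\sX^{p\nu}$ lying in $\sX^{p\nu}_0$) matches the paper's, but your argument for condition (ii) rests on a false premise. You claim that the central fibre of $\sX^{p\nu}\to\pr^1$ is reduced at its generic points --- ``reducedness at codimension-one points of $\sX_0$ being exactly what the intersection with $i_*\sO_{\sX\setminus\sX_0}$ buys us'' --- and then deduce that $t$ generates the maximal ideal of the one-dimensional local ring $A=\sO_{\sX^{p\nu},\eta}$, whence $A$ is regular. But the central fibre of a demi-normal (even normal) test configuration is in general \emph{not} generically reduced: for a normal test configuration one has $\sX_0=\sum_E b_E E$ with $b_E=\ord_E(t)$ arbitrary positive integers (these multiplicities are ubiquitous in \cite{BHJ}; normalized base changes $t\mapsto t^d$ already produce $b_E>1$), and since a normal test configuration coincides with its own partial normalization, your claimed reducedness fails for it. Condition (ii) only asks that the local ring of the \emph{total space} at $\eta$ be regular, not that $t$ be a uniformizer there. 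The mechanism the paper uses is \cite[Lemma 3.9]{genRT}: the normalization $\bar{\sX}\to\sX^{p\nu}$ is an isomorphism at every generic point of $\bar{\sX}_0$, so $\sO_{\sX^{p\nu},\eta}$ is the local ring of a normal variety at a codimension-one point, hence a DVR, regardless of the order of $t$ there. Your fallback assertion (``a one-dimensional $S_2$ local ring dominated by its normalization with the same generic points is normal'') is also false as stated: the localization of $\Bbbk[x,y]/(xy)$ at the origin is a counterexample.

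For condition (i) you assert that $i_*\sO_{\sX\setminus\sX_0}\cap\nu_*\sO_{\bar{\sX}}$ is $S_2$ because both factors are; this is the right statement, but it is precisely the non-trivial content of the proposition and you give no argument for it. The paper proves it with the Hochster--Huneke characterization of $S_2$: a ring $S$ is $S_2$ if and only if it equals $\{a\in K\mid\mathrm{ht}\,\sD_S(a)\geq 2\}$, and an element $a$ with $\mathrm{ht}\,\sD_{R^{p\nu}}(a)\geq 2$ is shown to lie in both $\bar{R}$ and $R[t^{-1}]$ because the denominator ideal only grows under $R^{p\nu}\subset\bar{R}$ and $R^{p\nu}\subset R[t^{-1}]$ while its height does not drop. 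Some argument of this kind is needed; ``intersecting does not destroy $S_2$'' is not a formal consequence of the definitions.
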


\begin{proof}
For any generic point $\eta\in\bar{\sX}_0$, the morphism $\bar{\sX}\to\sX^{p\nu}$ is 
an isomorphism at $\eta$ by \cite[Lemma 3.9]{genRT}. 
Thus it is enough to check that $\sX^{p\nu}$ satisfies Serre's $S_2$ condition. 
Take any $x\in\sX_0$. Take an affine open subvariety $x\in U\simeq\Spec R$
around $x\in\sX$. Then, around over $x\in\sX$, $\sX^{p\nu}$ is written as the spectrum 
of the following $\Bbbk$-algebra 
\[
R^{p\nu}:=R[t^{-1}]\cap\bar{R}, 
\]
where $t\in R$ is the non-homogeneous coordinate of $\pr^1$ and $\bar{R}$ is the 
integral closure of $R$ in the total quotient ring $K$ of $R$. 

For any $S\in\{R^{p\nu}$, $R[t^{-1}]$, $\bar{R}\}$ and for any $a\in K$, let us set 
\[
\sD_S(a):=\{s\in S\,|\, as\in S\}
\]
as in \cite[(2.3)]{HH}. This is an ideal of $S$. Moreover, let us consider the 
extension 
\[
\widetilde{S}:=\{a\in K\,|\, \text{ht}\sD_S(a)\geq 2\}
\]
of $S$. (We set $\text{ht} S:=+\infty$.)
By \cite[Proposition (2.4)]{HH} (see also \cite[Remark 1.4]{ciuperca}), 
the ring $S$ satisfies Serre's $S_2$ condition if and only if $\widetilde{S}=S$ holds. 
In particular, we have $\widetilde{R[t^{-1}]}=R[t^{-1}]$ and $\widetilde{\bar{R}}=\bar{R}$. 
Take any $a\in\widetilde{R^{p\nu}}$. Since $\text{ht}\sD_{R^{p\nu}}(a)\geq 2$, we have 
$\text{ht}\sD_{R^{p\nu}}(a)\bar{R}\geq 2$. Since $\sD_{R^{p\nu}}(a)\bar{R}\subset
\sD_{\bar{R}}(a)$, we have $a\in\widetilde{\bar{R}}=\bar{R}$. On the other hand, 
we know that $R^{p\nu}[t^{-1}]=R[t^{-1}]$. Since 
$\text{ht}\sD_{R^{p\nu}}(a)R[t^{-1}]\geq 2$ and $\sD_{R^{p\nu}}(a)R[t^{-1}]\subset
\sD_{R[t^{-1}]}(a)$, we have $a\in\widetilde{R[t^{-1}]}=R[t^{-1}]$. These imply that 
$a\in R^{p\nu}$. Therefore, $R^{p\nu}$ satisfies Serre's $S_2$ condition. 
\end{proof}

\begin{remark}\label{s2_rmk}
Let $(\sX, \sL)/\pr^1$ be a semiample test configuration of $(X, L)$. Assume that 
the local ring $\sO_{\sX, \eta}$ is regular for any generic point $\eta\in\sX_0$. Then 
the partial normalization $\sX^{p\nu}$ is nothing but the \emph{$S_2$-ification} of 
$\sX$ (in the sense of \cite[Proposition (5.10.10) and (5.10.11)]{G}, 
\cite[(2.3)]{HH}, \cite[Definition 6.20]{V} and \cite[\S 4]{LN}) by Proposition \ref{ring_prop}. 
In the word of \cite[Definition 5.1]{SingBook}, $\sX^{p\nu}$ is the 
\emph{demi-normalization} of $\sX$. In particular, any demi-normal test configuration is 
equal to its partial normalization. 
\end{remark}

\begin{lemma}\label{normalization_lemma}
Let $(\sX, \sL)/\pr^1$, $(\sY, \sM)/\pr^1$ be test configurations of $(X, L)$. 
Assume that there exists a $\G_m$-equivariant birational morphism 
$\phi\colon\sX\to\sY$ over $\pr^1$. Then $\phi$ lifts to the partial normalizations 
$\phi^{p\nu}\colon\sX^{p\nu}\to\sY^{p\nu}$. 
\end{lemma}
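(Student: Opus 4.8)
The plan is to exploit the universal property that defines the partial normalization in Definition \ref{partial_dfn}. Recall that $\sX^{p\nu}$ is obtained as $\Spec_\sX(i_*\sO_{\sX\setminus\sX_0}\cap\nu_*\sO_{\bar\sX})$, and similarly $\sY^{p\nu}=\Spec_\sY(j_*\sO_{\sY\setminus\sY_0}\cap\mu_*\sO_{\bar\sY})$, where $i,j$ denote the open inclusions of the non-zero-fiber loci and $\nu,\mu$ the normalizations. The first step is to pass to normalizations: since $\phi\colon\sX\to\sY$ is birational, it induces a $\G_m$-equivariant isomorphism on function fields (or on total quotient rings, in the non-normal case; note $\phi$ being birational and $\sX,\sY$ generically reduced means it is an isomorphism over the generic points of each component), hence by the universal property of normalization it lifts uniquely to a $\G_m$-equivariant morphism $\bar\phi\colon\bar\sX\to\bar\sY$ over $\pr^1$ with $\mu\circ\bar\phi=\phi\circ\nu$.

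Next I would work affine-locally on $\sY$, exactly as in the proof of Proposition \ref{ring_prop}. Choose an affine open $V=\Spec S\subset\sY$, let $U=\phi^{-1}(V)=\Spec R$ (we may assume $\phi$ is affine after shrinking, or cover $U$ by affines — since $\phi$ is a morphism of varieties and $V$ is affine, $U$ is at least separated, and one reduces to the affine case by a further cover), so that $\phi$ corresponds to a ring homomorphism $S\to R$ which becomes an isomorphism after inverting $t$ and which extends to the integral closures $\bar S\to\bar R$ inside the respective total quotient rings. Then $S^{p\nu}=S[t^{-1}]\cap\bar S$ and $R^{p\nu}=R[t^{-1}]\cap\bar R$ in the notation there. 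The key containment to check is that the composite $S\to R\to R^{p\nu}$ carries $S^{p\nu}$ into $R^{p\nu}$: an element $a\in S^{p\nu}$ lies in $\bar S$, whose image lies in $\bar R$ (since $\bar S\to\bar R$ exists), and lies in $S[t^{-1}]$, whose image lies in $R[t^{-1}]$; hence its image lies in $\bar R\cap R[t^{-1}]=R^{p\nu}$. Gluing these local maps (they are compatible by uniqueness on the overlaps, since the function-field identification is global) yields the desired $\G_m$-equivariant morphism $\phi^{p\nu}\colon\sX^{p\nu}\to\sY^{p\nu}$ over $\pr^1$, and it is birational because it agrees with $\phi$ over $\pr^1\setminus\{0\}$ where both partial normalizations restrict to $X\times(\pr^1\setminus\{0\})$. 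Finally $\phi^{p\nu}$ is $\G_m$-equivariant since all the sheaves $i_*\sO_{\sX\setminus\sX_0}$, $\nu_*\sO_{\bar\sX}$ etc.\ are $\G_m$-equivariant and the construction is canonical.

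The main obstacle I anticipate is the bookkeeping around non-normality: making precise the statement that $\phi$ "is an isomorphism over the generic points" so that one genuinely gets an identification of total quotient rings $K(S)\cong K(R)$ compatible with everything, and checking that $\phi$ may be taken affine (or that the affine-local constructions glue). Since $\phi$ is birational and $\sX,\sY$ are $S_2$ hence in particular generically reduced and equidimensional, $\phi$ restricts to an isomorphism on a dense open, which suffices to identify the total quotient rings; and $\phi$ being a morphism between test configurations restricting to the identity on $X\times(\pr^1\setminus\{0\})$ pins down this identification. I do not expect any serious difficulty beyond this — the essential point is purely the universal mapping property of the intersection $i_*\sO\cap\nu_*\sO$, which is functorial for birational morphisms in the evident way.
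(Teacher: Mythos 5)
Your argument is correct and is essentially the paper's own proof: the paper likewise first lifts $\phi$ to the normalizations $\bar{\phi}\colon\bar{\sX}\to\bar{\sY}$ and then uses functoriality of the intersection defining the partial normalization, phrased as a homomorphism of sheaves of $\sO_{\sX}$-algebras $\phi^*\bigl((\nu_\sY)_*\sO_{\bar{\sY}}\cap(i_\sY)_*\sO_{\sY\setminus\sY_0}\bigr)\to(\nu_\sX)_*\sO_{\bar{\sX}}\cap(i_\sX)_*\sO_{\sX\setminus\sX_0}$ rather than affine-locally. The one wrinkle you flag --- that $\phi^{-1}(V)$ need not be affine for a projective birational $\phi$ --- is harmless: a morphism to the affine scheme $\Spec S^{p\nu}$ is the same as a ring map $S^{p\nu}\to\Gamma(\phi^{-1}(V)^{p\nu},\sO)$, and your containment argument applies verbatim at the level of global sections.
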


\begin{proof}
Let $\nu_\sX\colon\bar{\sX}\to\sX$, $\nu_\sY\colon\bar{\sY}\to\sY$ be the 
normalizations, and let $i_\sX\colon\sX\setminus\sX_0\hookrightarrow\sX$, 
$i_\sY\colon\sY\setminus\sY_0\hookrightarrow\sY$  be the inclusions. 
We have a natural commutative diagram
\[\xymatrix{
\bar{\sX} \ar[d]_{\nu_\sX} \ar[r]^{\bar{\phi}}& \bar{\sY}  \ar[d]^{\nu_\sY} \\
\sX\ar[r]_\phi & \sY.
}\]
The morphism $\bar{\sX}\to\sX\times_\sY\bar{\sY}$ over $\sX$ induces a 
homomorphism
\[
\phi^*(\nu_\sY)_*\sO_{\bar{\sY}}\to(\nu_\sX)_*\sO_{\bar{\sX}}
\]
of coherent $\sO_\sX$-algebras. On the other hand, we have a natural homomorphism 
\[
\phi^*(i_\sY)_*\sO_{\sY\setminus\sY_0}\to(i_\sX)_*\sO_{\sX\setminus\sX_0}
\]
of quasi-coherent $\sO_\sX$-algebras. Thus we get a natural homomorphism 
\[
\phi^*\left((\nu_\sY)_*\sO_{\bar{\sY}}\cap(i_\sY)_*\sO_{\sY\setminus\sY_0}\right)
\to(\nu_\sX)_*\sO_{\bar{\sX}}\cap(i_\sX)_*\sO_{\sX\setminus\sX_0}
\]
of coherent $\sO_\sX$-algebras. The homomorphism induces a morphism 
$\sX^{p\nu}\to\sX\times_\sY\sY^{p\nu}$ over $\sX$. 
\end{proof}

\begin{corollary}\label{afs_cor}
Let $(\sX, \sL)/\pr^1$ be a semiample, demi-normal test configuration of $(X,L)$. 
Let $\phi\colon (\sX, \sL)\to (\sY, \sM)/\pr^1$ be the \emph{ample model} of 
$(\sX, \sL)/\pr^1$ in the sense of \cite[Definition 2.16]{BHJ}, i.e., 
$\phi$ is a projective birational morphism with $\phi_*\sO_{\sX}=\sO_{\sY}$ and 
$(\sY, \sM)/\pr^1$ is an ample test configuration of $(X, L)$ with 
$\phi^*\sM\sim_\Q\sL$. Then $(\sY, \sM)/\pr^1$ is a demi-normal test configuration 
of $(X, L)$. 
\end{corollary}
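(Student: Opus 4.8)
The plan is to reduce the statement to Proposition \ref{ring_prop} by identifying $\sY$ (up to isomorphism) with the partial normalization of a suitable test configuration, or more directly, by checking the two conditions (i) and (ii) of Definition \ref{tc_dfn} \eqref{tc_dfn2} for $\sY$ using what we already know about $\sX$ together with the hypothesis $\phi_*\sO_\sX = \sO_\sY$. First I would recall that $\phi\colon\sX\to\sY$ is a projective birational morphism which restricts to an isomorphism over $\pr^1\setminus\{0\}$ (since both test configurations restrict to $(X\times(\pr^1\setminus\{0\}),\,p_1^*L)$ there, and $\phi$ is the identity on that locus, being $\G_m$-equivariant and birational); in particular $\phi$ is an isomorphism over a neighbourhood of the generic points of $\sY\setminus\sY_0$, so the only issue is along $\sY_0$.

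The key point for condition \eqref{tc_dfn22} is that $\sY$ is regular at every generic point $\eta$ of $\sY_0$. Indeed, such an $\eta$ is a codimension-one point of $\sY$, and its image in $\pr^1$ is $\{0\}$; since $\phi_*\sO_\sX=\sO_\sY$ and $\phi$ is birational, $\sO_{\sY,\eta}$ is dominated by some $\sO_{\sX,\xi}$ with $\xi\mapsto\eta$, and by the valuative/Zariski-main-theorem argument (using that $\sY$ is normal at generic points of $\sY_0$ — this needs justification; in fact, since $\sY$ is demi-normal one should argue via the normalization) one sees that $\phi$ is an isomorphism over $\eta$, whence $\sO_{\sY,\eta}=\sO_{\sX,\xi}$ is regular by hypothesis \eqref{tc_dfn22} for $\sX$. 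For condition \eqref{tc_dfn21}, that $\sY$ is demi-normal: $\sY$ is $S_2$ because $\phi_*\sO_\sX=\sO_\sY$ and $\sX$ is $S_2$ (pushing forward along a proper birational morphism whose source is $S_2$ and which is an isomorphism in codimension one preserves $S_2$), and $\sY$ is regular or nodal at each codimension-one point: over $\pr^1\setminus\{0\}$ this is inherited from $X\times(\pr^1\setminus\{0\})$, which is demi-normal since $X$ is; over $\sY_0$ the only codimension-one points are the generic points of $\sY_0$, which are regular by the previous sentence. Hence $\sY$ is demi-normal and $(\sY,\sM)/\pr^1$ is a demi-normal test configuration.

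The cleanest way to organize this, which I would actually follow, is: apply Lemma \ref{normalization_lemma} to the birational morphism $\phi\colon\sX\to\sY$ to get $\phi^{p\nu}\colon\sX^{p\nu}\to\sY^{p\nu}$; since $\sX$ is already demi-normal, Remark \ref{s2_rmk} gives $\sX^{p\nu}=\sX$, so $\phi$ factors as $\sX\to\sY^{p\nu}\xrightarrow{p\nu}\sY$. Now $p\nu$ is finite and birational and an isomorphism at all generic points of $\sY_0$ (by Proposition \ref{ring_prop} and \cite[Lemma 3.9]{genRT}), hence is an isomorphism in codimension one; combined with $\phi_*\sO_\sX=\sO_\sY$ — which forces $\sO_\sY$ to contain $(p\nu)_*\sO_{\sY^{p\nu}}$ — and the $S_2$-ness of $\sY^{p\nu}$, the morphism $p\nu$ must be an isomorphism. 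Therefore $\sY=\sY^{p\nu}$ is a demi-normal test configuration by Proposition \ref{ring_prop}.

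I expect the main obstacle to be the bookkeeping at the generic points of $\sY_0$, i.e.\ verifying condition \eqref{tc_dfn22} for $\sY$ and, relatedly, that $p\nu\colon\sY^{p\nu}\to\sY$ is an isomorphism rather than merely a finite birational $S_2$-ification: one has to use that the $S_2$-ification of an $S_2$ scheme is itself, plus the fact that $\sO_\sY=\phi_*\sO_\sX$ is sandwiched between $\sO_\sY$ and $(p\nu)_*\sO_{\sY^{p\nu}}\subset\phi_*\sO_\sX$, which pins down the equality. Everything else is a routine transfer of the demi-normality conditions across an isomorphism-in-codimension-one proper birational morphism.
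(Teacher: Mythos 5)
Your final organized argument --- factoring $\phi$ through the partial normalization via Lemma \ref{normalization_lemma} together with $\sX^{p\nu}=\sX$ (Remark \ref{s2_rmk}), then using the chain $\sO_\sY\subset p\nu_*\sO_{\sY^{p\nu}}\subset p\nu_*\phi'_*\sO_\sX=\sO_\sY$ to force $\sY=\sY^{p\nu}$ and invoking Proposition \ref{ring_prop} --- is exactly the paper's proof, which likewise handles the generic points of $\sY_0$ by Zariski's main theorem using the flatness of $\sX\to\pr^1$. Keep that version and discard the first sketch: its assertion that pushing forward $\sO_\sX$ along a proper birational morphism preserves the $S_2$ condition is not justified in general, and avoiding that claim is precisely the point of the partial-normalization detour.
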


\begin{proof}
Take any generic point $\eta\in\sY_0$. Since $\phi_*\sO_{\sX}=\sO_{\sY}$ and 
the morphism $\sX\to\pr^1$ is flat, the morphism 
$\phi$ is an isomorphism over a neighborhood of $\eta$ by Zariski's main theorem 
(see \cite[Proposition 4.4.2]{liu} for example). 
Thus it is enough to check that $\sY$ satisfies Serre's $S_2$ condition. 
Let $p\nu\colon\sY^{p\nu}\to\sY$ be the partial normalization. 
By Remark \ref{s2_rmk}, the partial normalization of $\sX$ is $\sX$ itself. 
Thus, together with Lemma \ref{normalization_lemma}, we get the following 
commutative diagram
\[\xymatrix{
& \sY^{p\nu}  \ar[d]^{p\nu} \\
\sX \ar[ur]^{\phi'} \ar[r]_\phi & \sY.
}\]
Note that the composition of the inclusions 
\[
\sO_{\sY}\hookrightarrow p\nu_*\sO_{\sY^{p\nu}}\hookrightarrow p\nu_*\phi'_*\sO_{\sX}
=\sO_{\sY}
\]
is an identity. Thus $\sO_{\sY}=p\nu_*\sO_{\sY^{p\nu}}$. This implies that $\sY^{p\nu}
=\sY$. 
\end{proof}

\begin{definition}\label{trivial_dfn}
Let $(\sX, \sL)/\pr^1$ be a semiample, demi-normal test configuration of $(X, L)$ and 
let $\phi\colon(\sX, \sL)\to(\sY, \sM)/\pr^1$ be the ample model of $(\sX, \sL)/\pr^1$. 
(By Corollary \ref{afs_cor}, $(\sY, \sM)/\pr^1$ is an ample, demi-normal 
test configuration of $(X, L)$.) $(\sX, \sL)/\pr^1$ is said to be a \emph{trivial test 
configuration} of $(X, L)$ if $(\sY, \sM)/\pr^1$ is $\G_m$-equivariantly isomorphic to 
$(X\times\pr^1, p_1^*L)$ with the natural $\G_m$-action, where 
$p_1\colon X\times\pr^1\to X$ is the first projection. 
\end{definition}

\begin{definition}\label{nrmlztn_dfn}
Let $(\sX, \sL)/\pr^1$ be a semiample (resp., an ample), 
demi-normal test configuration of $(X, L)$. Let $\nu\colon\bar{X}\to X$, $\nu\colon
\bar{\sX}\to \sX$ be the normalizations. Then $(\bar{\sX}, \nu^*\sL)/\pr^1$ is a 
(possibly non-connected) semiample (resp., ample), normal test configuration of 
$(\bar{X}, \nu^*L)$. We call this \emph{the associated normal test configuration} 
of $(\bar{X}, \nu^*L)$.
\end{definition}

The following proposition is useful in order to check whether a given demi-normal 
test configuration is trivial or not. 

\begin{proposition}\label{trivial_prop}
Let $(\sX, \sL)/\pr^1$ be an ample, demi-normal test configuration of $(X, L)$ and 
let $(\bar{\sX}, \nu^*\sL)/\pr^1$ be the associated ample, normal test configuration 
of $(\bar{X}, \nu^*L)$. Then $(\sX, \sL)/\pr^1$ is the trivial test configuration if and 
only if $($any connected component of$)$ $(\bar{\sX}, \nu^*\sL)/\pr^1$ is so. 
\end{proposition}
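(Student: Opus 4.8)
The plan is to reduce the statement about a demi-normal test configuration to the statement about its associated normal test configuration, using the gluing description from Proposition \ref{glue_prop}. First I would recall that by definition an ample demi-normal test configuration $(\sX,\sL)/\pr^1$ of $(X,L)$ is itself a demi-normal variety, with the local ring at each generic point of $\sX_0$ regular; hence by Proposition \ref{glue_prop} (applied over $\pr^1$, i.e. to the demi-normal total space $\sX$) the triple $(\bar{\sX}, \bar{D}_{\bar{\sX}}, \tau_{\sX})$ uniquely recovers $\sX$, together with the $\G_m$-action, which by uniqueness of the gluing must be the one induced from $\bar{\sX}$. The trivial test configuration $(X\times\pr^1, p_1^*L)$ has associated normal test configuration $(\bar{X}\times\pr^1, p_1^*\nu^*L)$, and its conductor-plus-involution data is $(\bar{D}_{\bar X}\times\pr^1, \tau_X\times\id)$.

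The forward implication is immediate: if $(\sX,\sL)/\pr^1$ is trivial, then $\sX\cong X\times\pr^1$ $\G_m$-equivariantly over $\pr^1$, hence its normalization $\bar{\sX}\cong\bar{X}\times\pr^1$ with $\nu^*\sL\sim_\Q p_1^*\nu^*L$, which says exactly that each connected component of $(\bar{\sX},\nu^*\sL)/\pr^1$ is a trivial normal test configuration of the corresponding component of $(\bar X, \nu^* L)$. For the converse, suppose $(\bar{\sX},\nu^*\sL)/\pr^1$ is trivial, so there is a $\G_m$-equivariant isomorphism $\bar{\sX}\cong\bar{X}\times\pr^1$ over $\pr^1$ carrying $\nu^*\sL$ to $p_1^*\nu^*L$ (up to $\Q$-linear equivalence; after replacing $\sL$ by a multiple and twisting by a pullback from $\pr^1$ we may take an honest isomorphism of line bundles). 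Under this isomorphism the conductor divisor $D_{\bar{\sX}}$ is the closure of $D_{\bar X}\times(\pr^1\setminus\{0\})$, which — since $D_{\bar X}\times\pr^1$ is already closed and the isomorphism is $\G_m$-equivariant — is $D_{\bar X}\times\pr^1$; likewise the involution $\tau_{\sX}$ is $\tau_X\times\id_{\pr^1}$, because $\tau_{\sX}$ agrees with $\tau_X\times\id$ over $\pr^1\setminus\{0\}$ and both are morphisms of normal (hence $S_2$) schemes agreeing on a dense open, so they coincide. Therefore the gluing data of $\sX$ matches the gluing data of $X\times\pr^1$, and Proposition \ref{glue_prop} gives a $\G_m$-equivariant isomorphism $\sX\cong X\times\pr^1$ over $\pr^1$; it carries $\sL$ to $p_1^*L$ because it does so after pulling back to the normalizations and $\sL$, $p_1^*L$ are determined by their pullbacks (both spaces being demi-normal, the relevant line bundles descend uniquely along the normalization maps, compatibly). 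Finally, since $\phi\colon(\sX,\sL)\to(\sY,\sM)/\pr^1$ in the general (semiample) case is a birational morphism to the ample model, the statement for ample demi-normal test configurations is the heart of the matter, and the semiample case follows formally.

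The main obstacle I expect is the bookkeeping around $\Q$-linear equivalence versus genuine isomorphism of polarizations, and the $\G_m$-equivariance of the descent: one must check that the isomorphism $\bar{\sX}\cong\bar{X}\times\pr^1$ can be chosen $\G_m$-equivariantly (this is standard for normal test configurations, e.g. it is built into how triviality is phrased after passing to the ample model, but it should be invoked carefully), that it is compatible with $\tau$ and hence descends, and that the descended isomorphism intertwines $\sL$ and $p_1^*L$ rather than merely $\sL^{\otimes m}$ and $(p_1^*L)^{\otimes m}$ twisted by $\sO_{\pr^1}(k)$. The twist by $\sO_{\pr^1}(k)$ is harmless since a test configuration is considered up to such twists in \cite{BHJ}, but one should say so explicitly; alternatively one argues directly that $\sL$ and $p_1^*L$ are the unique ample $\Q$-line bundles on $\sX$ restricting to $p_1^*L$ away from $\sX_0$ and pulling back to the same class on $\bar{\sX}$, again using demi-normality and Proposition \ref{glue_prop} for line bundles (equivalently, the exact sequence relating $\Pic\sX$ to $\Pic\bar{\sX}$ and $\Pic\bar{D}_{\bar{\sX}}$).
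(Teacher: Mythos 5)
Your proposal is correct and follows essentially the same route as the paper: the forward direction by normalizing the trivial configuration, and the converse by identifying the gluing triple $(\bar{\sX},\bar{D}_{\bar{\sX}},\tau_{\sX})$ with $(\bar{X}\times\pr^1,\bar{D}_{\bar{X}}\times\pr^1,\tau_X\times\id_{\pr^1})$ --- using that the involutions agree over $\pr^1\setminus\{0\}$ --- and invoking Proposition \ref{glue_prop}. The extra care you take with the polarization and the $\sO_{\pr^1}(k)$-twist is more than the paper itself records, but it is consistent with its argument.
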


\begin{proof}
Let $D_{\bar{X}}\subset\bar{X}$ be the conductor divisor of $\bar{X}/X$, let 
$\bar{D}_{\bar{X}}$ be its normalization and let $\tau_X\colon\bar{D}_{\bar{X}}\to
\bar{D}_{\bar{X}}$ be the natural involution as in Definition \ref{demi_dfn} \eqref{demi_dfn1}. 
Similarly, let $D_{\bar{\sX}}\subset\bar{\sX}$ be the conductor divisor of $\bar{\sX}/\sX$, 
let $\bar{D}_{\bar{\sX}}$ be its normalization and let 
$\tau_{\sX}\colon\bar{D}_{\bar{\sX}}\to\bar{D}_{\bar{\sX}}$ be the natural involution. 
Assume that $(\bar{\sX}, \nu^*\sL)/\pr^1$ is the trivial test configuration. 
Then $\bar{\sX}$ is $\G_m$-equivariantly isomorphic to $\bar{X}\times\pr^1$. 
Thus $D_{\bar{\sX}}$ is $\G_m$-equivariantly isomorphic to $D_{\bar{X}}\times\pr^1$. 
Moreover, the involutions $\tau_X\times \id_{\pr^1}=\tau_{X\times\pr^1}\colon
{\bar{D}}_{\bar{X}}\times\pr^1\to{\bar{D}}_{\bar{X}}\times\pr^1$ must be equal to 
$\tau_{\sX}\colon\bar{D}_{\bar{\sX}}\to\bar{D}_{\bar{\sX}}$ 
under the above $\G_m$-equivariant isomorphism since 
$\tau_{X\times\pr^1}$ and $\tau_{\sX}$ are equal over $\pr^1\setminus\{0\}$. 
Thus we get the assertion from Proposition \ref{glue_prop}. 
\end{proof}

\section{Uniform K-stability}\label{uniform_section}

We recall the definition of Donaldson-Futaki invariants and various stability conditions. 

\begin{definition}\label{DF_dfn}
Let $(X, \Delta)$ be an $n$-dimensional projective demi-normal pair, let $L$ be an 
ample $\Q$-line bundle on $X$ and let $(\sX, \sL)/\pr^1$ be a semiample, demi-normal 
test configuration of $(X, L)$. Let 
\[\xymatrix{
& \sY  \ar[dl]_\Theta \ar[dr]^\Pi & \\
\sX \ar@{-->}[rr] & & X\times\pr^1
}\]
be the \emph{partial normalization} of the graph of the natural birational map 
$\sX\dashrightarrow X\times\pr^1$. Let $p_1\colon X\times\pr^1\to X$ be the 
first projection. 
\begin{enumerate}
\renewcommand{\theenumi}{\arabic{enumi}}
\renewcommand{\labelenumi}{(\theenumi)}
\item\label{DF_dfn1}(\cite[Definition 7.6]{BHJ})
We set 
\[
J^{\NA}(\sX, \sL):=\frac{1}{(L^{\cdot n})}\left((\Theta^*\sL\cdot
\Pi^*p_1^*L^{\cdot n})-\frac{1}{n+1}(\sL^{\cdot n+1})\right).
\]
\item\label{DF_dfn2}(\cite{wang}, \cite{genRT} and \cite[Definition 3.17]{BHJ})
We set the \emph{log Donaldson-Futaki invariant} as 
\[
\DF_\Delta(\sX, \sL):=\frac{1}{(L^{\cdot n})}
\left((\sL^{\cdot n}\cdot(K_{\sX/\pr^1}+\Delta_{\sX}))-\frac{n}{n+1}
\mu_{K_X+\Delta}(L)(\sL^{\cdot n+1})\right), 
\]
where 
$K_{\sX/\pr^1}$ is $K_{\sX}$ minus the pullback of $K_{\pr^1}$. 
\end{enumerate}
\end{definition}

\begin{proposition}\label{invariant_prop}
Let $(X, \Delta)$, $L$, $(\sX, \sL)/\pr^1$ be as in Definition \ref{DF_dfn}. 
\begin{enumerate}
\renewcommand{\theenumi}{\arabic{enumi}}
\renewcommand{\labelenumi}{(\theenumi)}
\item\label{invariant_prop1}
For any projective birational morphism $\phi\colon (\sZ, \phi^*\sL)\to(\sX, \sL)$
between demi-normal test configurations of $(X, L)$, we have 
\begin{eqnarray*}
J^{\NA}(\sX, \sL) &=& J^{\NA}(\sZ, \phi^*\sL),\\ 
\DF_\Delta(\sX, \sL) &=& \DF_\Delta(\sZ, \phi^*\sL). 
\end{eqnarray*}
\item\label{invariant_prop2}
Let $(\bar{\sX}, \nu^*\sL)/\pr^1$ be the associated normal test configuration of 
$(\sX, \sL)/\pr^1$. Then we have 
\begin{eqnarray*}
J^{\NA}(\sX, \sL) &=& J^{\NA}(\bar{\sX}, \nu^*\sL),\\ 
\DF_\Delta(\sX, \sL) &=& \DF_{D_{\bar{X}}+\bar{\Delta}}(\bar{\sX}, \nu^*\sL), 
\end{eqnarray*}
where $\bar{\Delta}:=\nu^{-1}_*\Delta$ on $\bar{X}$ and $D_{\bar{X}}$ is the 
conductor divisor of $\bar{X}/X$. 
\item\label{invariant_prop3}
We have $J^{\NA}(\sX, \sL)\geq 0$. Moreover, equality holds if and only if 
$(\sX, \sL)/\pr^1$ is a trivial test configuration of $(X, L)$. 
\end{enumerate}
\end{proposition}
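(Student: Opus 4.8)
The plan is to reduce everything to the normal case, where the intersection-theoretic formulas behave well, and then invoke Odaka's results together with the known statements for normal test configurations. For part \eqref{invariant_prop1}, I would observe that if $\phi\colon\sZ\to\sX$ is a projective birational morphism of demi-normal test configurations with $\phi^*\sL\sim_\Q\sL$, then by the projection formula $(\phi^*\sL^{\cdot n+1})=(\sL^{\cdot n+1})$, and moreover $\phi_*(K_{\sZ/\pr^1})$ differs from $K_{\sX/\pr^1}$ by a $\phi$-exceptional divisor which meets $(\phi^*\sL)^{\cdot n}$ trivially; likewise the boundary term $\Delta_{\sZ}$ is the strict transform of $\Delta_{\sX}$ and the discrepancy divisor is $\phi$-exceptional. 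Hence $(\phi^*\sL^{\cdot n}\cdot(K_{\sZ/\pr^1}+\Delta_{\sZ}))=(\sL^{\cdot n}\cdot(K_{\sX/\pr^1}+\Delta_{\sX}))$, giving the equality of $\DF_\Delta$. For $J^{\NA}$ the same projection-formula argument applies after noting that the partial normalization of the graph of $\sZ\dashrightarrow X\times\pr^1$ dominates that of $\sX\dashrightarrow X\times\pr^1$ (use Lemma \ref{normalization_lemma}), so the two auxiliary varieties $\sY$ compute the same intersection numbers.

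For part \eqref{invariant_prop2}, let $\nu\colon\bar\sX\to\sX$ be the normalization. Since $\nu$ is finite and $\nu^*\sL$ pulls back $\sL$, we have $((\nu^*\sL)^{\cdot n+1})=\deg(\nu)\cdot(\sL^{\cdot n+1})$; but $\nu$ is birational (it is an isomorphism over $\sX\setminus\sX_0$, which is dense), so $\deg(\nu)=1$ and the self-intersection numbers agree, and the same for the mixed term defining $J^{\NA}$ once one checks the graph construction is compatible with normalization. For the Donaldson-Futaki invariant the key point is the adjunction-type identity $\nu^*(K_\sX+\Delta_\sX)=K_{\bar\sX}+D_{\bar\sX}+\bar\Delta_{\bar\sX}$, which holds because $\sX$ is demi-normal (so $K_\sX$ is $\Q$-Cartier and the conductor is reduced, by the structure of demi-normal singularities recalled in \S\ref{demi_section}); pairing with $(\nu^*\sL)^{\cdot n}$ and using the projection formula once more converts $(\sL^{\cdot n}\cdot(K_{\sX/\pr^1}+\Delta_\sX))$ into $((\nu^*\sL)^{\cdot n}\cdot(K_{\bar\sX/\pr^1}+D_{\bar\sX}+\bar\Delta_{\bar\sX}))$. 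One also has $\mu_{K_X+\Delta}(L)=\mu_{K_{\bar X}+D_{\bar X}+\bar\Delta}(\nu^*L)$ by the same adjunction on $X$ itself, so the normalizing constants match and $\DF_\Delta(\sX,\sL)=\DF_{D_{\bar X}+\bar\Delta}(\bar\sX,\nu^*\sL)$.

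Part \eqref{invariant_prop3} is where the real content lies. Having reduced via \eqref{invariant_prop2} to the normal test configuration $(\bar\sX,\nu^*\sL)/\pr^1$ of $(\bar X,\nu^*L)$ (applied componentwise, since $\bar\sX$ may be disconnected), I would invoke the known positivity statement for $J^{\NA}$ of normal semiample test configurations: $J^{\NA}\geq 0$, with equality iff the test configuration is trivial. This is essentially \cite[Theorem 7.9 and Proposition 7.8]{BHJ} for ample test configurations, extended to the semiample case by passing to the ample model (Corollary \ref{afs_cor}), under which $J^{\NA}$ is unchanged by \eqref{invariant_prop1}; the nonnegativity itself follows from the log-concavity / mixed-inequality estimates for intersection numbers on the family over $\pr^1$, or from Odaka's slope arguments in \cite{genRT}. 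The triviality characterization then needs care: $J^{\NA}(\bar\sX,\nu^*\sL)=0$ forces each connected component of the associated normal test configuration to be trivial, and then Proposition \ref{trivial_prop} upgrades this to triviality of $(\sX,\sL)/\pr^1$ itself. The main obstacle is precisely pinning down the equality case of $J^{\NA}\geq 0$ in the demi-normal setting while being careful that the word ``trivial'' here means trivial in the sense of Definition \ref{trivial_dfn} (i.e. the ample model is $X\times\pr^1$), not merely isomorphic to $X\times\pr^1$ as a scheme — Proposition \ref{trivial_prop} is the bridge, so the argument must cite it correctly and handle the disconnected normalization honestly.
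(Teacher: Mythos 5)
Your proposal follows essentially the same route as the paper: part (1) by the projection formula, part (2) by the conductor adjunction $\nu^*K_{\sX}=K_{\bar{\sX}}+D_{\bar{\sX}}$, and part (3) by reducing to the ample model via Corollary \ref{afs_cor} and part (1), quoting \cite[Theorem 7.9]{BHJ} (and \cite{dervan}) in the normal case, and then descending through part (2) and Proposition \ref{trivial_prop} exactly as the paper does. One correction: your parenthetical claim that $K_{\sX}$ is $\Q$-Cartier ``because $\sX$ is demi-normal'' is false (demi-normality does not imply $\Q$-Gorenstein), so the identity $\nu^*(K_{\sX}+\Delta_{\sX})=K_{\bar{\sX}}+D_{\bar{\sX}}+\bar{\Delta}_{\bar{\sX}}$ cannot literally be a pullback of $\Q$-Cartier divisors; it should instead be read at the level of Weil divisor cycle classes, where the intersection numbers in Definition \ref{DF_dfn} are computed, using the codimension-one adjunction for the conductor together with the projection formula --- this is what the paper's one-line justification implicitly does, and with that rephrasing your argument is sound.
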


\begin{proof}
\eqref{invariant_prop1} is trivial. See \cite{dervan, BHJ} for example. 

\eqref{invariant_prop2} 
For the normalization $\nu\colon \bar{X}\to X$, we know that $\nu^*K_X=K_{\bar{X}}
+D_{\bar{X}}$. The assertion immediately follows from this fact. 

\eqref{invariant_prop3}
By \eqref{invariant_prop1}, after replacing its ample model (see Corollary \ref{afs_cor}), 
we may assume that $(\sX, \sL)/\pr^1$ is an ample, demi-normal test configuration. 
If $\sX$ is normal, then the assertion follows from \cite[Theorem 1.3]{dervan} and 
\cite[Theorem 7.9]{BHJ}. For a general case, it follows from the normal case, 
\eqref{invariant_prop2} and Proposition \ref{trivial_prop}. 
\end{proof}

\begin{definition}[{see \cite{BHJ, dervan} for example}]\label{K_dfn}
Let $(X, \Delta)$ be a projective demi-normal pair and $L$ be an ample $\Q$-line bundle 
on $X$. 
\begin{enumerate}
\renewcommand{\theenumi}{\arabic{enumi}}
\renewcommand{\labelenumi}{(\theenumi)}
\item\label{K_dfn1}
We say that $((X, \Delta), L)$ is \emph{uniformly K-stable} if 
there exists $\delta\in(0,1)$ such that for any semiample, demi-normal test configuration 
$(\sX,\sL)/\pr^1$ of $(X, L)$, the inequality $\DF_\Delta(\sX, \sL)\geq \delta \cdot 
J^{\NA}(\sX,\sL)$ holds. 
\item\label{K_dfn2}
We say that $((X, \Delta), L)$ is \emph{K-stable} (resp., \emph{K-semistable}) if, 
for any non-trivial, semiample, demi-normal test configuration 
$(\sX,\sL)/\pr^1$ of $(X, L)$, the inequality 
$\DF_\Delta(\sX, \sL)>0$ (resp., $\geq 0$) holds. 
\item\label{K_dfn3}
We say that $((X, \Delta), L)$ is \emph{K-polystable}  if 
$((X, \Delta), L)$ is K-semistable and, the equality $\DF_\Delta(\sX, \sL)=0$ for an ample, 
demi-normal test configuration $(\sX, \sL)/\pr^1$ of $(X, L)$ implies that 
$(\sX\setminus\sX_\infty, \Delta_{\sX}|_{\sX\setminus\sX_\infty})
\simeq(X\times\A^1, \Delta\times\A^1)$, where $\sX_\infty$ is the scheme-theoretic 
fiber of $\sX\to\pr^1$ at $\infty\in\pr^1$. 
\end{enumerate}
\end{definition}

\begin{remark}\label{imply_rmk}
\begin{enumerate}
\renewcommand{\theenumi}{\arabic{enumi}}
\renewcommand{\labelenumi}{(\theenumi)}
\item\label{imply_rmk1}
From Proposition \ref{invariant_prop}, uniform K-stability implies K-stability. 
Moreover, it is obvious that K-stability implies K-polystability and K-polystability 
implies K-semistability. 
\item\label{imply_rmk2}
After Li and Xu found a pathological example \cite[Example 3]{LX}, the notion of 
\emph{test configurations trivial in codimension $2$}, 
\emph{almost trivial test configurations} was given in \cite[Definition 1]{stoppa}, 
\cite[Definition 3.3]{pams}, respectively. See also \cite[Definition 2.9]{BHJ}. 
Thanks to Propositions \ref{ring_prop}, \ref{trivial_prop} and Corollary \ref{afs_cor}, 
the definitions of K-stability and K-semistability in our sense coincide with the ones 
in the senses of \cite[Corollary 3.11]{genRT} and \cite[Definition 3.11]{BHJ}. 
In particular, we do not need to consider almost trivial test 
configurations in order to test K-stability of $((X, \Delta), L)$ for demi-normal pairs 
$(X, \Delta)$. 
\end{enumerate}
\end{remark}

The following theorems are important for the studies of K-stability. 

\begin{thm}[{\cite[Theorem 1.2]{annals}}]\label{annals_thm}
Let $(X, \Delta)$ be a projective demi-normal pair and let $L$ be an ample $\Q$-line 
bundle. If $((X, \Delta), L)$ is K-semistable, then $(X, \Delta)$ is an slc pair. 
\end{thm}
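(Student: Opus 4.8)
\section*{Proof proposal}

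The plan is to argue by contraposition: assuming $(X,\Delta)$ is \emph{not} slc, I would construct a semiample, demi-normal test configuration $(\sX,\sL)/\pr^1$ of $(X,L)$ with $\DF_\Delta(\sX,\sL)<0$, which by Definition \ref{K_dfn} contradicts K-semistability. By Proposition \ref{invariant_prop} \eqref{invariant_prop2}, both $\DF_\Delta$ and $J^\NA$ are unchanged upon passing to the associated normal test configuration of $(\bar X,\nu^*L)$, the boundary there becoming $D_{\bar X}+\bar\Delta$; and by Definition \ref{discrep_dfn}, $(X,\Delta)$ is slc precisely when the normal pair $(\bar X, D_{\bar X}+\bar\Delta)$ is lc. Using the gluing description of Proposition \ref{glue_prop} — a demi-normal test configuration of $(X,L)$ is determined by its associated normal test configuration of $(\bar X,\nu^*L)$ together with the conductor involution $\tau_X$ — the whole problem reduces to the normal case, provided the destabilizing test configuration built on $\bar X$ is $\tau_X$-compatible. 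This will be automatic: the ideal used in the construction will be taken cosupported away from the conductor (when the bad valuation has center off $D_{\bar X}$) or $\tau_X$-symmetrized (otherwise), so that it descends to an ideal on $X$.

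Granting the reduction, I rename $\bar X$ to $X$ and $D_{\bar X}+\bar\Delta$ to $\Delta$, so $(X,\Delta)$ is a normal pair that is \emph{not} lc; pick a prime divisor $F$ over $X$ with $A_{(X,\Delta)}(F)<0$, and let $Z:=c_X(F)$ be its center. The destabilizer will be a \emph{deformation to the normal cone}: choose an ideal sheaf $\da\subset\sO_X$ cosupported on $Z$ — concretely a valuation ideal $\mathfrak{b}_m(F)=\{\,f\mid\ord_F(f)\ge m\,\}$ for $m\gg0$, or the pushforward of $\sO(-mF)$ from a model extracting $F$ — let $\sX'$ be the $\Proj$ over $\A^1$ of the extended Rees algebra $\bigoplus_{j\in\Z}\da^{j}t^{-j}$ (with $\da^{j}:=\sO_X$ for $j\le0$), glued trivially over $\infty\in\pr^1$, and let $\sX$ be its partial normalization, which is a demi-normal test configuration by Proposition \ref{ring_prop}. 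Writing $\mu\colon\Bl_\da X\to X$ with $\da\cdot\sO_{\Bl_\da X}=\sO(-E)$ and $\mathcal E$ for the corresponding exceptional divisor on $\sX$, polarize by the $\Q$-line bundle $\sL$ that over the deformation-to-normal-cone chart is $p^*L-\varepsilon\mathcal E$ (here $p\colon\sX\to X$ is the natural morphism over $\A^1$) with $0<\varepsilon\ll1$; this is $\pi$-semiample as soon as $\mu^*L-\varepsilon E$ is semiample on $\Bl_\da X$, which holds for small $\varepsilon$.

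Next I would compute, using the intersection-theoretic formulas of Definition \ref{DF_dfn}, asymptotic Riemann--Roch on $\sX$, and the identification of the relevant intersection numbers with Hilbert--Samuel-type multiplicities of $\da$. Expanding $\DF_\Delta(\sX,\sL)$ and $J^\NA(\sX,\sL)$ as polynomials in $\varepsilon$ with a common positive normalization, Odaka's key computation shows that the first nonvanishing $\varepsilon$-coefficient of $\DF_\Delta(\sX,\sL)$ equals, up to a positive factor, a ``discrepancy defect'' attached to $\da$; taking $\da=\mathfrak{b}_m(F)$ and letting $m\to\infty$, this defect is governed by the log discrepancy $A_{(X,\Delta)}(F)$ together with a correction term supported on $Z$ that tends to $0$. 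Since $A_{(X,\Delta)}(F)<0$, for $m\gg0$ and then $0<\varepsilon\ll1$ we get $\DF_\Delta(\sX,\sL)<0$. Replacing $(\sX,\sL)$ by its ample model (Corollary \ref{afs_cor} and Proposition \ref{invariant_prop} \eqref{invariant_prop1}) gives an ample, demi-normal test configuration of $(X,L)$ with negative log Donaldson--Futaki invariant, contradicting K-semistability; hence $(X,\Delta)$ is slc.

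The main obstacle is the computation in the third paragraph: converting a single non-lc valuation into an actual test configuration with negative $\DF$ requires (i) precise asymptotic control, in $\varepsilon$ and in $m$, of $(\sL^{\cdot n+1})$, $(\sL^{\cdot n}\cdot K_{\sX/\pr^1})$ and $(\sL^{\cdot n}\cdot\Delta_{\sX})$ in terms of orders of vanishing of $\da$ along $F$, $K_X$, $\Delta$ and the conductor divisor; (ii) showing that the ideals $\mathfrak{b}_m(F)$ approximate $A_{(X,\Delta)}(F)$ in the limit, so the $Z$-correction is negligible; and (iii) checking that these estimates are insensitive to the partial normalization $\sX'\rightsquigarrow\sX$ and to the trivial compactification over $\infty\in\pr^1$. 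A secondary point, already indicated above, is the descent of the construction from $\bar X$ to $X$ through the conductor involution, which forces $\da$ to be (the restriction of) a genuine ideal on $X$.
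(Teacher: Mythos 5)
First, a point of reference: the paper does not prove this statement at all --- it is quoted directly from Odaka \cite[Theorem 1.2]{annals} --- so there is no in-paper argument to compare yours against, and I am judging your sketch as a standalone proof of Odaka's theorem. Your overall strategy is the right one and is in fact Odaka's: argue by contraposition, reduce to the normalization via Proposition \ref{invariant_prop} \eqref{invariant_prop2}, destabilize using a blow-up construction attached to valuation ideals $\mathfrak{b}_m(F)$ of a place $F$ with $A_{(X,\Delta)}(F)<0$, and expand $\DF_\Delta$ asymptotically in $\varepsilon$ and then in $m$.

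The genuine gap is that the entire content of the theorem is the computation you defer in your third paragraph. You write that ``Odaka's key computation shows that the first nonvanishing $\varepsilon$-coefficient of $\DF_\Delta(\sX,\sL)$ equals, up to a positive factor, a discrepancy defect''; in a blind proof of Odaka's theorem this is circular, since that identity --- the decomposition of $\DF_\Delta$ into a ``canonical divisor part'' (which must be shown to be dominated in the relevant asymptotic regime) and a ``discrepancy part'' of the form $\bigl(\sL^{\cdot n}\cdot K_{\sX/X\times\pr^1}\bigr)$ --- together with your item (ii) \emph{is} the theorem. Without carrying it out one cannot even confirm the sign: the discrepancy term is computed on the exceptional divisors of $\Bl_{\da}X$, not on $F$ itself, and one must prove that the negative contribution coming from $A_{(X,\Delta)}(F)<0$ survives both the limit $m\to\infty$ and the competition with the other intersection numbers. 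A secondary unresolved point is the descent through the conductor: when the center of $F$ lies inside $D_{\bar X}$, replacing $\mathfrak{b}_m(F)$ by its $\tau_X$-symmetrization changes the ideal whose log discrepancies enter the computation, and you have not checked that negativity persists; Odaka avoids this by working with flag ideals on the non-normal $X\times\A^1$ from the outset (then passing to the partial normalization as in Proposition \ref{ring_prop}) rather than constructing on $\bar X$ and gluing back via Proposition \ref{glue_prop}.
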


\begin{thm}\label{odk_thm}
Let $(X, \Delta)$ be an $n$-dimensional projective slc pair. 
\begin{enumerate}
\renewcommand{\theenumi}{\arabic{enumi}}
\renewcommand{\labelenumi}{(\theenumi)}
\item\label{odk_thm1}
$($\cite[Corollary 9.3]{BHJ}, see also \cite[Theorem 1.1 (i)]{calabi}, 
\cite[Theorem 1.2 (ii)]{dervan}$)$ Assume that 
$L:=K_X+\Delta$ is ample. Then, for any semiample, demi-normal test configuration 
$(\sX, \sL)/\pr^1$ of $(X, L)$, we have 
\[
\DF_\Delta(\sX, \sL)\geq \frac{1}{n}\cdot J^{\NA}(\sX,\sL).
\] 
In particular, 
$((X, \Delta), L)$ is uniformly K-stable. 
\item\label{odk_thm2}
$($\cite[Theorem 1.1 (ii)]{calabi} and \cite[Corollary 9.4]{BHJ}, see also 
\cite[Theorem (iii)]{dervan}$)$
Assume that $K_X+\Delta\equiv 0$ and let $L$ be an arbitrary ample $\Q$-line bundle 
on $X$. Then $((X, \Delta), L)$ is K-semistable. Moreover, 
$((X, \Delta), L)$ is uniformly K-stable if and only if $(X, \Delta)$ is a klt pair. 
\item\label{odk_thm3}
$($\cite[Theorem 1.3]{annals} and \cite[Corollary 9.6]{BHJ}$)$
Assume that $L:=-(K_X+\Delta)$ is ample. If $((X, \Delta), L)$ is K-semistable, 
then $(X, \Delta)$ must be a klt pair $($i.e., $(X, \Delta)$ is a log Fano pair$)$. 
\end{enumerate}
\end{thm}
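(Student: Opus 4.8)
\emph{Overall strategy.} Each of the three assertions is known for \emph{normal} pairs --- via Odaka \cite{calabi, annals}, Boucksom--Hisamoto--Jonsson \cite{BHJ} and Dervan \cite{dervan} --- so the plan is to reduce to that case and then recall the mechanism. Let $\nu\colon\bar{X}\to X$ be the normalization and $(\bar{X},D_{\bar{X}}+\bar{\Delta})$ the associated normal pair with $\bar{\Delta}:=\nu^{-1}_*\Delta$. For any semiample, demi-normal test configuration $(\sX,\sL)/\pr^1$ of $(X,L)$, pass to its associated normal test configuration $(\bar{\sX},\nu^*\sL)/\pr^1$; by Proposition \ref{invariant_prop} \eqref{invariant_prop2} one has $J^{\NA}(\sX,\sL)=J^{\NA}(\bar{\sX},\nu^*\sL)$ and $\DF_\Delta(\sX,\sL)=\DF_{D_{\bar{X}}+\bar{\Delta}}(\bar{\sX},\nu^*\sL)$. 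The hypotheses transfer to $(\bar{X},D_{\bar{X}}+\bar{\Delta})$: in \eqref{odk_thm1}, $K_{\bar{X}}+D_{\bar{X}}+\bar{\Delta}=\nu^*L$ is ample and $\mu_{K_X+\Delta}(L)=1$; in \eqref{odk_thm2}, $K_{\bar{X}}+D_{\bar{X}}+\bar{\Delta}\equiv 0$ and $\mu_{K_X+\Delta}(L)=0$; in \eqref{odk_thm3}, $-(K_{\bar{X}}+D_{\bar{X}}+\bar{\Delta})=\nu^*L$ is ample, and $(X,\Delta)$ is klt iff $(\bar{X},D_{\bar{X}}+\bar{\Delta})$ is. Hence it suffices to prove the statements for the (possibly disconnected) normal pair, to which I now specialize while keeping the notation $(X,\Delta)$, $L$.

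\emph{Parts \eqref{odk_thm1} and the K-semistability in \eqref{odk_thm2}.} Work on a common model $\sY$ dominating $\sX$ and $X\times\pr^1$ via $\Theta$ and $\Pi$ as in Definition \ref{DF_dfn}, and set $D:=\Theta^*\sL-\Pi^*p_1^*L$; since $\sL\sim_\Q p_1^*L$ away from the central fibre, $D$ is supported on $\sY_0$. Expanding the intersection numbers defining $J^{\NA}$ and $\DF_\Delta$ as polynomials in $D$ --- using $(\Pi^*p_1^*L)^{\cdot n+1}=0$ --- one finds that $\DF_\Delta(\sX,\sL)$ splits as a ``discrepancy part'' $\sum_i a_i(\sL^{\cdot n}\cdot E_i)$ (with $E_i$ the components of $\sX_0$ and $a_i$ the log discrepancies of $(X,\Delta)$ along the divisorial valuations induced by the $E_i$; these are $\ge 0$ by (semi-)log canonicity, while $(\sL^{\cdot n}\cdot E_i)\ge 0$ by semiampleness of $\sL$) plus, when $L=K_X+\Delta$, an ``energy part'' bounded below by $\tfrac1n J^{\NA}(\sX,\sL)$ via the Hodge index inequality applied to $D$ and $\Pi^*p_1^*L$ on $\sY_0$. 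This yields
\[
\DF_\Delta(\sX,\sL)\ \ge\ \tfrac1n\, J^{\NA}(\sX,\sL)\qquad\text{when }L=K_X+\Delta,
\]
which is \cite[Corollary 9.3]{BHJ}; uniform K-stability follows with any $\delta\in(0,1)$ satisfying $\delta\le 1/n$ (and any $\delta\in(0,1)$ if $n=1$). When $K_X+\Delta\equiv 0$ the energy part vanishes ($\mu_{K_X+\Delta}(L)=0$), so $\DF_\Delta(\sX,\sL)=\sum_i a_i(\sL^{\cdot n}\cdot E_i)\ge 0$, i.e. $((X,\Delta),L)$ is K-semistable.

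\emph{The klt criteria in \eqref{odk_thm2} and \eqref{odk_thm3}, and the main obstacle.} If $(X,\Delta)$ is klt with $K_X+\Delta\equiv 0$, all the $a_i$ above are strictly positive and a valuative/compactness refinement of the same computation upgrades the bound to $\DF_\Delta\ge\delta J^{\NA}$ for a fixed $\delta>0$ --- this is \cite[Corollary 9.4]{BHJ} (equivalently \cite[Theorem 1.1 (ii)]{calabi}). Conversely, if $(X,\Delta)$ is not klt, there is a divisorial lc place $E$ over $X$ with $A_{(X,\Delta)}(E)=0$; the deformation to the normal cone along the centre of $E$, descended to $X$ compatibly with the gluing data via Proposition \ref{glue_prop}, is a non-trivial semiample demi-normal test configuration with $\DF_\Delta=0$ but $J^{\NA}>0$ by Proposition \ref{invariant_prop} \eqref{invariant_prop3}, so $((X,\Delta),L)$ is not uniformly K-stable. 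For \eqref{odk_thm3}: Theorem \ref{annals_thm} already forces $(X,\Delta)$ to be slc once it is K-semistable, and if it were not klt the same deformation to the normal cone of a non-klt centre --- now computed for the Fano polarization $L=-(K_X+\Delta)$ through Odaka's discrepancy estimate --- has \emph{strictly negative} Donaldson--Futaki invariant, contradicting K-semistability; this is \cite[Theorem 1.3]{annals} (see also \cite[Corollary 9.6]{BHJ}). The one genuinely non-formal ingredient, and hence the main obstacle, is precisely this destabilization: constructing the correct test configuration from a non-klt centre and pinning down the sign of $\DF_\Delta$ via the intersection-theoretic expansion. Everything else --- the reduction to the normal case, the discrepancy/Hodge-index expansion, and the K-semistability in \eqref{odk_thm1}--\eqref{odk_thm2} --- is essentially bookkeeping.
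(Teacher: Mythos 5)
Your proposal matches the paper's proof: the paper likewise reduces to the (possibly disconnected) normal pair $(\bar{X}, D_{\bar{X}}+\bar{\Delta})$ by passing to the associated normal test configuration and applying Proposition \ref{invariant_prop} \eqref{invariant_prop2}, then invokes \cite[\S 9]{BHJ} and Odaka's results for the normal case (observing that only part \eqref{odk_thm1} requires this reduction, the other two parts being quoted directly from the literature). Your additional sketches of the internals of the cited results go beyond what the paper records but do not change the route.
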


\begin{proof}
Only \eqref{odk_thm1} is unknown. However, the assertion immediately follows from 
Proposition \ref{invariant_prop} \eqref{invariant_prop2} and \cite[\S 9]{BHJ} by 
considering the associated normal test configurations. 
\end{proof}

We sometimes use the following negativity lemma.

\begin{lemma}[{see \cite[Lemma 6.14]{BHJ}}]\label{negativity_lem}
Let $X$ be an $n$-dimensional demi-normal projective variety, let $L$ be an ample 
$\Q$-line bundle on $X$ and let $(\sX, \sL)/\pr^1$ be a semiample, demi-normal 
test configuration of $(X, L)$. Assume that $D$ is a $\Q$-Cartier $\Q$-divisor 
on $\sX$ supported on $\sX_0$, and $\sM_1,\dots,\sM_{n-1}$ be $\Q$-Cartier 
$\Q$-divisors on $\sX$ such that each $\sM_i$ is nef over $\pr^1$. Then we have 
$(\sM_1\cdots\sM_{n-1}\cdot D^{\cdot 2})\leq 0$.
\end{lemma}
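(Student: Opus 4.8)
The plan is to reduce to the case in which $\sX$ is smooth and then run the computation underlying Zariski's lemma, with the role of the ``negative semi-definite form'' played by $q(\cdot,\cdot):=(\sM_1\cdots\sM_{n-1}\cdot(\cdot)\cdot(\cdot))$ on the group of divisors supported on the central fibre.

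First I would pass to a resolution of singularities $\sigma\colon\sX'\to\sX$, which may be chosen to factor through the normalization of $\sX$. Since $\sigma$ is an isomorphism over a dense open subset and $D,\sM_1,\dots,\sM_{n-1}$ are all $\Q$-Cartier on the $(n+1)$-dimensional $\sX$, the projection formula yields
\[
(\sigma^*\sM_1\cdots\sigma^*\sM_{n-1}\cdot(\sigma^*D)^{\cdot 2})=(\sM_1\cdots\sM_{n-1}\cdot D^{\cdot 2}).
\]
The composite $\sX'\to\pr^1$ is a dominant morphism from a smooth projective variety to a smooth curve, hence flat with fibres of pure dimension $n$; moreover $\sigma^*D$ is supported on the central fibre $\sX'_0$ and each $\sigma^*\sM_i$ is still nef over $\pr^1$. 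So I may replace the data by $(\sX',\sigma^*D,\sigma^*\sM_i)$ and assume from now on that $\sX$ is smooth, so that every prime component of $\sX_0$ is Cartier.

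Next I would record the two inputs needed for the Zariski-type argument. Write $\sX_0=\sum_{i=1}^r b_iE_i$ with $b_i\in\Z_{>0}$ and the $E_i$ the prime components of the central fibre, and $D=\sum_{i=1}^r a_iE_i$ with $a_i\in\Q$. The first input is $\sum_{j=1}^r b_j\,q(E_i,E_j)=q(E_i,\sX_0)=0$ for every $i$: on $\pr^1$ the divisors $\{0\}$ and $\{\infty\}$ are linearly equivalent, hence $\sX_0\sim\sX_\infty$ on $\sX$, and $q(E_i,\sX_0)=q(E_i,\sX_\infty)=0$ because $E_i$ is disjoint from $\Supp\sX_\infty$. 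The second input is $q(E_i,E_j)\ge 0$ for $i\ne j$: the divisor $E_i|_{E_j}$ is effective, the classes $\sM_k|_{E_j}$ are nef on the $n$-dimensional $E_j$, and the intersection of $n-1$ nef classes with an effective divisor is non-negative. Feeding these into the elementary identity obtained by using the first input to eliminate the self-intersections $q(E_i,E_i)$ and then symmetrizing in $i,j$ gives
\[
(\sM_1\cdots\sM_{n-1}\cdot D^{\cdot 2})=-\sum_{1\le i<j\le r}b_ib_j\,q(E_i,E_j)\Bigl(\frac{a_i}{b_i}-\frac{a_j}{b_j}\Bigr)^{\!2}\le 0,
\]
which is the assertion. (Once $\sX$ is normal one may instead quote \cite[Lemma 6.14]{BHJ} for this last part.)

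The step I expect to be the genuine obstacle is the reduction in the second paragraph: on a demi-normal, or even just normal, $\sX$ the individual components $E_i$ of $\sX_0$ need not be $\Q$-Cartier, so neither the bilinear pairing $q(E_i,E_j)$ nor the substitution used to derive the last display is legitimate before resolving. The care required is only in checking that flatness of $\sX'\to\pr^1$, the support of $\sigma^*D$, relative nefness of $\sigma^*\sM_i$, and the intersection number are all unchanged; after that, everything is the classical proof of Zariski's lemma.
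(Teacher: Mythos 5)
Your proposal is correct and follows essentially the same route as the paper: the paper's entire proof is to pull everything back to the normalization, where the intersection number is unchanged by the projection formula, and then invoke \cite[Lemma 6.14]{BHJ}. You carry out the same reduction (pushing on to a resolution) and then reprove the cited lemma from scratch via the classical Zariski-lemma computation, which is a valid, self-contained substitute for the citation.
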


\begin{proof}
Follows immediately from \cite[Lemma 6.14]{BHJ}
after taking the normalization of $\sX$. 
\end{proof}

\section{Uniform K-stability of log Fano pairs}\label{fano_section}

In this section, we always assume that $(X, \Delta)$ is an $n$-dimensional log Fano pair 
and $L:=-(K_X+\Delta)$. 
We recall the theory established in \cite{fjta, li, fjtb, fjt17}. 
More precisely, there is a simple criterion to test whether $((X, \Delta), L)$ is 
uniformly K-stable or not. We recall this in this section. 

\begin{definition}\label{vol_dfn}
Let $F$ be a prime divisor over $X$. Fix a projective birational morphism 
$\sigma\colon Y\to X$ such that $Y$ is normal and $F$ is a prime divisor on $Y$. 
\begin{enumerate}
\renewcommand{\theenumi}{\arabic{enumi}}
\renewcommand{\labelenumi}{(\theenumi)}
\item\label{vol_dfn1}
For any Cartier divisor $M$ on $X$ and for any $x\in\R_{\geq 0}$, let 
$H^0(X, M-xF)$ be the sub $\Bbbk$-vector space of $H^0(X, M)$ defined by
\[
H^0(X, M-xF):=H^0(Y, \sigma^*M-xF)\subset H^0(Y, \sigma^*M)
\]
under the identification $H^0(X, M)=H^0(Y,\sigma^*M)$. 
Note that the definition does not depend on the choice of $\sigma$.  
\item\label{vol_dfn2}
For any $\Q$-Cartier $\Q$-divisor $M$ on $X$ and for any $x\in\R_{\geq 0}$, we set 
\[
\vol_X(M-xF)
=\limsup_{\substack{k\to\infty\\kM:\text{ Cartier}}}
\frac{\dim_\Bbbk H^0(X, kM-kxF)}{k^n/n!}.
\]
The limsup is actually the limit (see \cite{L1, L2}). Moreover, the function $\vol_X(M-xF)$ 
is non-increasing and continuous over $x\in[0,\infty)$, and identically equal to zero 
for $x\gg 0$. 
\end{enumerate}
\end{definition}

\begin{definition}\label{beta_dfn}
\begin{enumerate}
\renewcommand{\theenumi}{\arabic{enumi}}
\renewcommand{\labelenumi}{(\theenumi)}
\item\label{beta_dfn1}
For any prime divisor $F$ over $X$, we set 
\[
\hat{\beta}_{(X, \Delta)}(F):=
1-\frac{\int_0^\infty\vol_X(L-xF)dx}{A_{(X, \Delta)}(F)(L^{\cdot n})}.
\]
\item\label{beta_dfn2}
(see \cite{tian87, Dem08}) We set 
\[
\alpha(X, \Delta):=\sup\{\alpha\in\Q_{\geq 0}\,|\,(X, \Delta+\alpha D)\text{: klt 
for any }D\geq 0\text{ with }D\sim_\Q L\}.
\]
\end{enumerate}
\end{definition}

The following theorem is important in this article. 

\begin{thm}[{\cite[Theorem 1.5]{fjt17}, see also \cite[Theorem 3.7]{li} 
and \cite[Theorem 6.6]{fjtb}}]\label{beta_thm}
The followings are equivalent: 
\begin{enumerate}
\renewcommand{\theenumi}{\arabic{enumi}}
\renewcommand{\labelenumi}{(\theenumi)}
\item\label{beta_thm1}
$((X, \Delta), L)$ is uniformly K-stable $($resp., K-semistable$)$, 
\item\label{beta_thm1}
there exists $\varepsilon\in(0,1)$ $($resp., $\varepsilon\in[0,1))$ such that 
$\hat{\beta}_{(X, \Delta)}(F)\geq \varepsilon$ holds for any prime divisor $F$ over $X$. 
\end{enumerate}
\end{thm}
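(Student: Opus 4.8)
The plan is to exploit the correspondence between test configurations and divisorial valuations on $X$. Since $X$ is normal, Proposition \ref{invariant_prop} reduces us to testing stability against normal test configurations only, and after passing to ample models we may assume $(\sX,\sL)/\pr^1$ is a normal \emph{ample} test configuration of $(X,L)$. Writing $\sX_0=\sum_i b_iE_i$ with the $E_i$ prime, the $\G_m$-equivariant identification $\sX\setminus\sX_0\simeq X\times(\pr^1\setminus\{0\})$ together with $\Bbbk(\sX)\cong\Bbbk(X)(t)$ restricts each $\ord_{E_i}$ either to the trivial valuation or to $c_i\ord_{F_i}$ for a prime divisor $F_i$ over $X$ and some $c_i\in\Q_{>0}$. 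The whole argument rests on rewriting $\DF_\Delta(\sX,\sL)$ and $J^{\NA}(\sX,\sL)$ in terms of the numbers $A_{(X,\Delta)}(F_i)$ and the volume integrals $\int_0^\infty\vol_X(L-xF_i)\,dx$.

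For the implication (2)$\Rightarrow$(1), I attach to $(\sX,\sL)$ the multiplicative filtration of $R:=\bigoplus_{m\geq0}H^0(X,-m(K_X+\Delta))$ induced by the order of vanishing along $\sX_0$, and its Rees/Okounkov divisor. Computing $(\sL^{\cdot n}\cdot K_{\sX/\pr^1})$, $(\sL^{\cdot n+1})$ and $(\Theta^*\sL\cdot\Pi^*p_1^*L^{\cdot n})$ by restriction to general members of $|m\sL|$ (equivalently, via an Okounkov body), and using Lemma \ref{negativity_lem} to absorb the error terms produced when $\Q$-Cartierizing the $E_i$, I expect an inequality of the shape
\[
\DF_\Delta(\sX,\sL)\ \geq\ \frac{1}{(L^{\cdot n})}\sum_i d_i\Bigl(A_{(X,\Delta)}(F_i)(L^{\cdot n})-\int_0^\infty\vol_X(L-xF_i)\,dx\Bigr)
\]
with $d_i\geq0$, together with a lower bound $\sum_i d_iA_{(X,\Delta)}(F_i)\geq\kappa_n\,J^{\NA}(\sX,\sL)$ for an explicit dimensional constant $\kappa_n>0$. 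Feeding in $\hat\beta_{(X,\Delta)}(F_i)\geq\varepsilon$, i.e.\ that each bracket is $\geq\varepsilon\,A_{(X,\Delta)}(F_i)(L^{\cdot n})$, yields $\DF_\Delta(\sX,\sL)\geq\varepsilon\kappa_n\,J^{\NA}(\sX,\sL)$, so $((X,\Delta),L)$ is uniformly K-stable; the case $\varepsilon=0$ gives $\DF_\Delta\geq0$, hence K-semistability.

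For (1)$\Rightarrow$(2), fix a prime divisor $F$ over $X$ and, for $m\gg0$, let $(\sX_m,\sL_m)/\pr^1$ be the normal semiample test configuration produced by the $\ord_F$-filtration of $R$ in degrees $\leq m$ (concretely by deformation to the normal cone of $F$ on a birational model where the relevant algebra is finitely generated; a general $F$ is reached by approximating $\vol_X(L-xF)$ and $A_{(X,\Delta)}(F)$ through such finitely generated data). Since $\DF_\Delta$ and $J^{\NA}$ are normalized by $1/(L^{\cdot n})$, these satisfy, as $m\to\infty$,
\[
\DF_\Delta(\sX_m,\sL_m)\longrightarrow A_{(X,\Delta)}(F)\,\hat\beta_{(X,\Delta)}(F),\qquad J^{\NA}(\sX_m,\sL_m)\longrightarrow J_F,
\]
where $J_F$ is comparable to $\frac{1}{(L^{\cdot n})}\int_0^\infty\vol_X(L-xF)\,dx$, say $\tfrac1n\cdot\tfrac{1}{(L^{\cdot n})}\int_0^\infty\vol_X(L-xF)\,dx\leq J_F$. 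Passing to the limit in $\DF_\Delta\geq\delta J^{\NA}$ gives $A_{(X,\Delta)}(F)\hat\beta_{(X,\Delta)}(F)\geq\delta J_F$, and combining with $\tfrac1n A_{(X,\Delta)}(F)(1-\hat\beta_{(X,\Delta)}(F))\leq J_F$ yields $\hat\beta_{(X,\Delta)}(F)\geq\delta/(n+\delta)=:\varepsilon\in(0,1)$, uniformly in $F$; in the K-semistable case one only uses $\DF_\Delta\geq0$ and obtains $\hat\beta_{(X,\Delta)}(F)\geq0$.

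I expect the main obstacle to be the positivity estimate in (2)$\Rightarrow$(1): bounding $\DF_\Delta(\sX,\sL)$ below by the volume expression above, uniformly over all normal ample test configurations, requires the Okounkov-body computation on $\sX$ together with careful control, via Lemma \ref{negativity_lem}, of the error terms produced when $\Q$-Cartierizing the components of $\sX_0$. The secondary difficulty is the finite-generation issue for an arbitrary prime divisor $F$ over $X$ in (1)$\Rightarrow$(2), handled by approximating $\vol_X(L-xF)$ and $A_{(X,\Delta)}(F)$ by finitely generated data on higher birational models; the limit computation for $\DF_\Delta$ and $J^{\NA}$ along the resulting test configurations is then a continuity argument for the volume function.
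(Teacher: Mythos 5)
First, a point of reference: the paper does not prove Theorem \ref{beta_thm} at all --- it is imported wholesale from \cite[Theorem 1.5]{fjt17}, \cite[Theorem 3.7]{li} and \cite[Theorem 6.6]{fjtb}, so there is no internal proof to compare against. Judged on its own terms, your sketch has a genuine gap in the direction (2)$\Rightarrow$(1), which is the hard half of the theorem. You assert an inequality of the shape
\[
\DF_\Delta(\sX,\sL)\ \geq\ \frac{1}{(L^{\cdot n})}\sum_i d_i\Bigl(A_{(X,\Delta)}(F_i)(L^{\cdot n})-\int_0^\infty\vol_X(L-xF_i)\,dx\Bigr),\qquad d_i\geq 0,
\]
for an arbitrary normal ample test configuration, but this is precisely the content of the theorem and you give no argument for it beyond ``I expect.'' Moreover, the shape itself is suspect: while the term $(\sL^{\cdot n}\cdot K_{\sX/\pr^1})$ does decompose component by component via the log discrepancies $A_{(X,\Delta)}(F_i)$ (through the Gauss-extension formula $A_{X\times\pr^1}(E_i)=c_iA_X(F_i)+b_i$), the term $(\sL^{\cdot n+1})$ --- equivalently the Duistermaat--Heckman measure of the test configuration --- does \emph{not} split as a sum of the one-divisor volume integrals $\int_0^\infty\vol_X(L-xF_i)\,dx$ when $\sX_0$ has several components; those integrals arise only for the test configuration generated by a single $\ord_{F}$-filtration. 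Lemma \ref{negativity_lem} controls self-intersection terms of divisors supported on $\sX_0$, but it does not produce the per-component decomposition you need. This is exactly why the known proofs do not argue directly on $\DF_\Delta$: they pass through the Ding invariant (bounding $\DF_\Delta\geq\Ding_\Delta$ and then computing $\Ding_\Delta$ via a log canonical threshold on $\sX$, which \emph{is} a valuative quantity accessible to the hypothesis $\hat\beta\geq\varepsilon$), or, for bare K-semistability, through the Li--Xu MMP reduction to special test configurations --- a reduction that destroys control of $J^{\NA}$ and therefore cannot give the uniform statement. Some such mechanism must replace your claimed inequality.

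Your direction (1)$\Rightarrow$(2) is essentially the standard argument and is sound in outline: build test configurations from the finitely generated truncations of the $\ord_F$-filtration, verify that $\DF_\Delta$ and $J^{\NA}$ converge to $\beta$- and $j$-type functionals of $F$, and compare the two limits. The remaining care needed there is the convergence statement itself (the limit of the Donaldson--Futaki invariants of the truncations is in general only bounded above by the naive filtration invariant; one needs the equality for the $\ord_F$-filtration, which follows from the continuity and concavity properties of $\vol_X(L-xF)$), and the precise constants relating $J^{\NA}$ to $\int_0^\infty\vol_X(L-xF)\,dx$; your $\varepsilon=\delta/(n+\delta)$ is of the right flavor but should be checked against the normalization $\DF_\Delta\geq\frac{\varepsilon}{n+1}J^{\NA}$ appearing in Lemma \ref{technical_lem}.
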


Recently, the theory of \emph{delta-invariants} introduced in \cite{FO} is much 
developed by \cite{BJ}. The following definition is not the original definition in 
\cite{FO, BJ}. See \cite[Theorem C]{BJ} in detail. 

\begin{definition}[{\cite{FO, BJ}}]\label{delta_dfn}
We set 
\[
\delta(X, \Delta):=\inf_{\substack{F:\text{ prime}\\ \text{divisor over }X}}
\frac{1}{1-\hat{\beta}_{(X, \Delta)}(F)},
\]
and we call it the \emph{delta-invariant} of $(X, \Delta)$. From Theorem \ref{beta_thm}, 
the uniform K-stability (resp., the K-semistability) of $((X, \Delta), L)$ is equivalent to the 
condition $\delta(X, \Delta)>1$ (resp., $\delta(X, \Delta)\geq 1$). 
\end{definition}

\begin{lemma}[{see \cite[Theorem A]{BJ} and \cite[Theorem 3.5]{FO}}]\label{alpha_lemma}
We have the inequality 
\[
\alpha(X, \Delta)\geq \frac{\delta(X, \Delta)}{n+1}.
\]
\end{lemma}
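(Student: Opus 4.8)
The plan is to rewrite both invariants as infima over prime divisors $F$ over $X$ and then compare them termwise. For a prime divisor $F$ over $X$, let
\[
T_L(F):=\sup\{x\in\R_{\geq 0}\,|\,\vol_X(L-xF)>0\}
\]
denote the pseudo-effective threshold of $L$ along $F$. By the standard valuative description of the $\alpha$-invariant (which is what \cite[Theorem 3.5]{FO} and \cite[Theorem A]{BJ} supply: computing $\lct$ by divisorial valuations, together with the fact that $\sup\{\ord_F(D)\,|\,D\geq 0,\ D\sim_\Q L\}=T_L(F)$) one has
\[
\alpha(X,\Delta)=\inf_{\substack{F:\text{ prime}\\ \text{divisor over }X}}\frac{A_{(X,\Delta)}(F)}{T_L(F)},
\]
while directly from Definitions \ref{beta_dfn} and \ref{delta_dfn},
\[
\delta(X,\Delta)=\inf_{\substack{F:\text{ prime}\\ \text{divisor over }X}}\frac{A_{(X,\Delta)}(F)\,(L^{\cdot n})}{\int_0^\infty\vol_X(L-xF)\,dx}.
\]
Thus it suffices to prove the termwise inequality $(n+1)\int_0^\infty\vol_X(L-xF)\,dx\geq T_L(F)\,(L^{\cdot n})$ for every $F$: taking the infimum over $F$ of $A_{(X,\Delta)}(F)/T_L(F)\geq\frac{1}{n+1}\cdot A_{(X,\Delta)}(F)(L^{\cdot n})/\int_0^\infty\vol_X(L-xF)\,dx$ then gives the result. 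Note $(L^{\cdot n})>0$ and $0<T_L(F)<\infty$ since $L$ is ample, so no degenerate case occurs.

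To prove the termwise inequality, fix $F$, set $T:=T_L(F)$ and $f(x):=\vol_X(L-xF)$. By Definition \ref{vol_dfn} the function $f$ is non-increasing and continuous on $[0,\infty)$ with $f(0)=(L^{\cdot n})$ and $f(x)=0$ for $x\geq T$; moreover, by the Khovanskii--Teissier (Brunn--Minkowski type) inequality for volumes, $x\mapsto f(x)^{1/n}$ is concave on $[0,T]$. Since $f(0)^{1/n}=(L^{\cdot n})^{1/n}$ and $f(T)^{1/n}=0$, concavity yields $f(x)^{1/n}\geq(1-x/T)(L^{\cdot n})^{1/n}$, i.e. $f(x)\geq(1-x/T)^n(L^{\cdot n})$ for $x\in[0,T]$. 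Integrating,
\[
\int_0^\infty\vol_X(L-xF)\,dx=\int_0^T f(x)\,dx\geq(L^{\cdot n})\int_0^T\Bigl(1-\frac{x}{T}\Bigr)^n dx=\frac{T\,(L^{\cdot n})}{n+1},
\]
which is exactly the required bound.

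The genuinely non-formal ingredients are the valuative formula for $\alpha(X,\Delta)$ and the concavity of $\vol_X(L-xF)^{1/n}$, both of which are available in the literature (\cite{FO, BJ, L1, L2}); everything else is the elementary integral estimate above. I therefore expect the only real point to be citing these two facts in the precise form needed, not any new estimate.
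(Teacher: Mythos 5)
Your proof is correct, but it follows a different route from the paper's. The paper never writes down a valuative formula for $\alpha(X,\Delta)$: it fixes an arbitrary effective $D\sim_\Q L$, lets $c$ be its log canonical threshold with an lc place $F$ (so $\ord_F\sigma^*D=A_{(X,\Delta)}(F)/c$), and then bounds $\int_0^\infty\vol_X(L-xF)\,dx$ \emph{below} by $\int_0^\infty\vol_X\bigl(L-\tfrac{cx}{A_{(X,\Delta)}(F)}D\bigr)dx=\tfrac{A_{(X,\Delta)}(F)(L^{\cdot n})}{c(n+1)}$, using only the monotonicity of the volume in the effective direction $D\geq \ord_F(\sigma^*D)F$ and the exact scaling $\vol((1-t)L)=(1-t)^n(L^{\cdot n})$; comparing with $\hat{\beta}_{(X,\Delta)}(F)\geq 1-\delta(X,\Delta)^{-1}$ gives $c\geq\delta(X,\Delta)/(n+1)$ for every $D$, hence the lemma. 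You instead invoke the valuative description $\alpha(X,\Delta)=\inf_F A_{(X,\Delta)}(F)/T_L(F)$ together with the termwise bound $(n+1)\int_0^\infty\vol_X(L-xF)\,dx\geq T_L(F)(L^{\cdot n})$, which you derive from the concavity of $\vol^{1/n}$ on the big cone. Both arguments are sound (and yours is essentially the proof of the corresponding inequality in \cite[Theorem A]{BJ}); the trade-off is that you import two external inputs --- the valuative formula for $\alpha$ and log-concavity of the volume function --- where the paper needs only the existence of a divisorial lc place and monotonicity of $\vol$. Your approach does have the merit of isolating the purely convex-geometric inequality $T_L(F)\leq(n+1)S$ with $S:=\int_0^\infty\vol_X(L-xF)\,dx/(L^{\cdot n})$, which is the sharp relation between the two invariants attached to a single $F$; just make sure to cite the log-concavity of $\vol$ for big (not merely nef) classes, since $L-xF$ leaves the nef cone well before $x$ reaches $T_L(F)$ in general.
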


\begin{proof}
We give a proof for the readers' convenience. 
Take any $D\geq 0$ with $D\sim_\Q L$. Set 
\[
c:=\max\{c'>0\,|\, (X, \Delta+c'D)\text{ is log canonical}\}.
\]
Then there exists a prime divisor $F$ over $X$ such that $A_{(X, \Delta+cD)}(F)=0$ holds. 
We remark that $\hat{\beta}_{(X, \Delta)}(F)\geq 1-\delta(X, \Delta)^{-1}$ holds. 
Take any resolution $\sigma\colon Y\to X$ with $F\subset Y$. Then 
$0=A_{(X, \Delta+cD)}(F)=A_{(X, \Delta)}(F)-c\ord_F\sigma^*D$. Thus we get 
\begin{eqnarray*}
1-\frac{1}{\delta(X, \Delta)} & \leq & 
1-\frac{\int_0^\infty\vol_Y(\sigma^*L-xF)dx}{A_{(X, \Delta)}
(F)(L^{\cdot n})}\\
&\leq&1-\frac{\int_0^\infty\vol_Y\left(\sigma^*L-\frac{cx}{A_{(X, \Delta)}(F)}\sigma^*D
\right)dx}{A_{(X, \Delta)}(F)(L^{\cdot n})}\\
&=& 1-\frac{1}{c(n+1)}.
\end{eqnarray*}
Therefore we get $c\geq\delta(X, \Delta)/(n+1)$. 
\end{proof}

We will use the following technical lemma later. 

\begin{lemma}[{\cite[Claim 2.4]{fjt17} and \cite[Theorem 6.6]{fjtb}}]\label{technical_lem}
Assume that there exists $\varepsilon\in[0,1)$ such that, for any prime divisor $F$ over 
$X$, $\hat{\beta}_{(X, \Delta)}(F)\geq \varepsilon$ holds, that is, 
$\delta(X, \Delta)\geq 1/(1-\varepsilon)$ holds. Then, for any semiample, 
normal test configuration $(\sX, \sL)/\pr^1$ of $(X, L)$, we have the inequality 
\[
\DF_\Delta(\sX, \sL)\geq \frac{\varepsilon}{n+1}\cdot J^{\NA}(\sX, \sL).
\]
\end{lemma}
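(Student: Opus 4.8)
The plan is to prove Lemma \ref{technical_lem} by reducing the inequality for an arbitrary semiample normal test configuration to a statement about a single prime divisor over $X$, for which the hypothesis $\hat\beta_{(X,\Delta)}(F)\ge\varepsilon$ applies directly. The starting point is the well-known fact (from \cite{fjta, li, fjtb, fjt17} and used throughout that theory) that for a semiample test configuration one can pass to its ample model, and then pull back to a log resolution, so without loss of generality $\sX$ dominates $X\times\pr^1$ via $\Pi$ and $\sL=\Pi^*p_1^*L-D$ for an effective $\Q$-divisor $D$ supported on $\sX_0$; moreover, since $(\sX,\sL)/\pr^1$ is semiample and $L=-(K_X+\Delta)$, the relevant divisor $\sX_0$ is irreducible up to a component we may extract, so one reduces to the case where $D = \mathrm{ord}_F(\cdot)\cdot(\text{the component of }\sX_0)$ for the prime divisor $F$ over $X$ induced by that component, plus a ``diagonal'' correction.

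First I would recall the explicit formula for $\DF_\Delta(\sX,\sL)$ in terms of intersection numbers on the dominating model $\sY$, using $K_{\sX/\pr^1}+\Delta_\sX$ and the expansion $\sL=\Pi^*p_1^*L-D$; expanding $(\sL^{\cdot n}\cdot(K_{\sX/\pr^1}+\Delta_\sX))$ and $(\sL^{\cdot n+1})$ binomially in $D$ and collecting terms, one obtains $\DF_\Delta(\sX,\sL)$ as an integral over the ``Okounkov-type'' filtration, which is the standard manipulation from \cite[Theorem 1.5]{fjt17}: namely $\DF_\Delta(\sX,\sL)$ can be written as a positive combination of $\hat\beta_{(X,\Delta)}(F_j)$ over the prime divisors $F_j$ appearing in $\mathrm{Supp}(D)\cap\sX_0$, together with a $J^{\mathrm{NA}}$-term. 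Then I would apply the hypothesis $\hat\beta_{(X,\Delta)}(F_j)\ge\varepsilon$ to each such term and compare the resulting lower bound against $J^{\mathrm{NA}}(\sX,\sL)$, which from Definition \ref{DF_dfn} \eqref{DF_dfn1} has a parallel expansion in $D$. The factor $\frac{1}{n+1}$ emerges exactly as in Lemma \ref{alpha_lemma} from the identity $\int_0^\infty \mathrm{vol}_Y(\sigma^*L - x\,\sigma^*D)\,dx = \frac{1}{n+1}(L^{\cdot n})$ when $D\sim_\Q L$, controlling the difference between the ``energy'' appearing in $\DF$ and in $J^{\mathrm{NA}}$.

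The main obstacle will be bookkeeping the passage from a general (not necessarily irreducible special fiber, not necessarily dominating $X\times\pr^1$) semiample normal test configuration to the clean situation where the estimate is an instance of the single-divisor bound; this requires invoking the reduction machinery of \cite{fjta, li, fjtb, fjt17} carefully, and making sure the error terms coming from components of $\sX_0$ along which $D$ vanishes do not spoil positivity — here one uses the negativity lemma (Lemma \ref{negativity_lem}) to sign the ``mixed'' intersection numbers $(\sM_1\cdots\sM_{n-1}\cdot D^{\cdot 2})\le 0$. Once that reduction is in place, the arithmetic is exactly the chain of inequalities in the proof of Lemma \ref{alpha_lemma}, run with the test-configuration filtration in place of the filtration by $\mathrm{ord}_F$.

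Since this lemma is quoted verbatim from \cite[Claim 2.4]{fjt17} and \cite[Theorem 6.6]{fjtb}, I would in fact keep the write-up brief: state that one reduces to the ample model and to a dominating log resolution as in loc.\ cit., expand $\DF_\Delta$ and $J^{\mathrm{NA}}$ as above, apply $\hat\beta_{(X,\Delta)}(F)\ge\varepsilon$ (equivalently $\delta(X,\Delta)\ge 1/(1-\varepsilon)$) componentwise, and conclude. The heart of the matter — why $\hat\beta$ controls $\DF$ — is precisely Theorem \ref{beta_thm}'s proof, so the honest statement is that Lemma \ref{technical_lem} is a quantitative refinement extracted from that proof, and I would cite it accordingly rather than reproduce the full computation.
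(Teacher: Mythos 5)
The paper itself offers no proof of Lemma \ref{technical_lem}: it is imported verbatim from \cite[Claim 2.4]{fjt17} and \cite[Theorem 6.6]{fjtb}, so your final decision to cite rather than reprove is consistent with what the paper does. The problem is that the argument you sketch on the way to that decision does not describe a correct proof, and a reader trying to follow it would get stuck at the central step. Concretely: after dominating $X\times\pr^1$ and writing $\Theta^*\sL=\Pi^*p_1^*L+D$ with $D$ supported on the central fibre, it is \emph{not} true that $\DF_\Delta(\sX,\sL)$ expands as a positive combination of $\hat{\beta}_{(X,\Delta)}(F_j)$ over the prime divisors $F_j$ induced by the components of $\Supp D$, plus a $J^{\NA}$-term. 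The components of $\sX_0$ do induce divisorial valuations on $X$ (after restriction and rescaling), but $\DF$ does not decompose linearly over them; deducing $\DF_\Delta\geq\frac{\varepsilon}{n+1}J^{\NA}$ from the single-divisor hypothesis is precisely the hard direction of the valuative criterion, not a bookkeeping exercise. Likewise, the assertion that ``$\sX_0$ is irreducible up to a component we may extract'' fails for a general normal semiample test configuration; irreducibility of the central fibre is only achieved after running a $\G_m$-equivariant MMP as in \cite{LX}, which uses $L=-(K_X+\Delta)$ essentially and requires separate control of how $\DF_\Delta-\delta J^{\NA}$ changes along each MMP step.

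The actual proofs behind the citation take one of two routes, neither of which is a binomial expansion: (a) the MMP reduction to special test configurations, where $\sX_0$ really is irreducible and $\DF_\Delta$ is a positive multiple of $\hat{\beta}$ of the single induced divisor, combined with the quantitative monotonicity estimates of \cite{LX, li, BHJ}; or (b) the filtration argument of \cite{fjtb}, which attaches to $(\sX,\sL)$ a multiplicative filtration of the anticanonical section ring, approximates it by its base-ideal sequence, and applies the hypothesis to divisors computing log canonical thresholds of those ideals. Your appeal to the identity $\int_0^\infty\vol_Y(\sigma^*L-x\,\sigma^*D)\,dx=\frac{1}{n+1}(L^{\cdot n})$ for $D\sim_\Q L$ is the mechanism behind Lemma \ref{alpha_lemma}, not the source of the constant $\frac{1}{n+1}$ here. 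In short: citing is fine (and is what the paper does), but the sketch should either be deleted or replaced by an accurate one-line description of the reduction actually used in \cite{fjtb, fjt17}.
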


The following is the main result in this section. 

\begin{thm}\label{fanolog_thm}
\begin{enumerate}
\renewcommand{\theenumi}{\arabic{enumi}}
\renewcommand{\labelenumi}{(\theenumi)}
\item\label{fanolog_thm1}
Take any $\delta_0\in\R_{>0}$ with $\delta(X, \Delta)>\delta_0$. Set 
\[
\varepsilon_0:=\frac{\delta(X, \Delta)-\delta_0}{n\cdot \delta(X, \Delta)+n+1}.
\]
Then, for any effective $\Q$-Cartier $\Q$-divisor $B$ on $X$ with $\varepsilon_0 L-B$ 
nef, $(X, \Delta+B)$ is a log Fano pair and $\delta(X, \Delta+B)\geq \delta_0$.
\item\label{fanolog_thm2}
Take any $\delta_1\in\R_{>0}$ with $\delta(X, \Delta)<\delta_1$. Set 
\[
\varepsilon_1:=\min\left\{\frac{\delta(X, \Delta)}{n+1},\quad
 1-\sqrt[n+1]{\frac{\delta(X, \Delta)}
{\delta_1}}\right\}.
\]
Then, for any effective $\Q$-Cartier $\Q$-divisor $B$ on $X$ with $\varepsilon_1 L-B$ 
ample, $(X, \Delta+B)$ is a log Fano pair and $\delta(X, \Delta+B)\leq \delta_1$.
\end{enumerate}
\end{thm}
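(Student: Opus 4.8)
The strategy for both parts is to pass through the valuative description of the delta-invariant and to compare the invariants $A_{(X,\Delta)}(F)$, $\hat\beta_{(X,\Delta)}(F)$ with their counterparts for the perturbed pair $(X,\Delta+B)$. First, one must check that $(X,\Delta+B)$ is genuinely a log Fano pair. For \eqref{fanolog_thm1} (resp. \eqref{fanolog_thm2}), since $\varepsilon_0 L-B$ is nef (resp. $\varepsilon_1 L-B$ is ample), we have $-(K_X+\Delta+B)=L-B=(1-\varepsilon_0)L+(\varepsilon_0 L-B)$, which is the sum of an ample and a nef (resp. two ample) $\Q$-divisors, hence ample. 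Kltness is where $\varepsilon_1\le\alpha(X,\Delta)$ is used in part \eqref{fanolog_thm2}: by Lemma \ref{alpha_lemma}, $\alpha(X,\Delta)\ge\delta(X,\Delta)/(n+1)\ge\varepsilon_1$, and since $B/\varepsilon_1\sim_\Q L$ (up to scaling; in general $B\le\varepsilon_1 L$ in the nef/ample sense lets one dominate $B$ by a $\Q$-divisor $\Q$-linearly equivalent to $\varepsilon_1 L$), $(X,\Delta+B)$ is klt. In part \eqref{fanolog_thm1} the same $\alpha$-bound is implicit once one notes $\varepsilon_0<\delta(X,\Delta)/(n+1)$.

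The core estimate for \eqref{fanolog_thm1} is a lower bound on $\hat\beta_{(X,\Delta+B)}(F)$ for an arbitrary prime divisor $F$ over $X$. Write $L'=L-B$, $A'=A_{(X,\Delta+B)}(F)=A_{(X,\Delta)}(F)-\ord_F(B)$. The point is that $\vol_X(L'-xF)$ can be bounded below by $\vol_X(L-(x+\ord_F B)F)$ up to a factor coming from the nefness of $\varepsilon_0 L-B$: since $L-B\ge (1-\varepsilon_0)L$ in the nef sense, one gets $\vol_X(L'-xF)\ge (1-\varepsilon_0)^n\vol_X(L-\tfrac{x}{1-\varepsilon_0}F)$ or a similar scaling inequality, and after the change of variables the integral $\int_0^\infty\vol_X(L'-xF)dx$ is controlled from below by a multiple of $\int_0^\infty\vol_X(L-xF)dx$. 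Combining this with $A'\le A_{(X,\Delta)}(F)$ and $(L'^{\cdot n})\le(L^{\cdot n})$, together with the hypothesis $\hat\beta_{(X,\Delta)}(F)\ge 1-\delta(X,\Delta)^{-1}$, yields $\hat\beta_{(X,\Delta+B)}(F)\ge 1-\delta_0^{-1}$ after the algebra with $\varepsilon_0$ is carried out — this is exactly the role of the denominator $n\delta(X,\Delta)+n+1$. Taking the infimum over $F$ gives $\delta(X,\Delta+B)\ge\delta_0$.

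For \eqref{fanolog_thm2} one argues in the opposite direction: choosing $F$ to nearly compute $\delta(X,\Delta)$, one shows $\hat\beta_{(X,\Delta+B)}(F)$ is not too large. Here $B\ge 0$ and $\varepsilon_1 L-B$ ample give $L'=L-B\le L$, so $\vol_X(L'-xF)\le\vol_X(L-xF)$, while $(L'^{\cdot n})\ge(1-\varepsilon_1)^n(L^{\cdot n})$ (using $L-B\ge(1-\varepsilon_1)L$ in the nef sense) and $A'\ge A_{(X,\Delta)}(F)-\varepsilon_1\cdot\text{(something)}$ controlled via $\varepsilon_1\le\alpha(X,\Delta)$; the $(n+1)$-st root in the definition of $\varepsilon_1$ is precisely what makes $(1-\varepsilon_1)^{n+1}\ge\delta(X,\Delta)/\delta_1$, pushing $1-\hat\beta_{(X,\Delta+B)}(F)$ above $\delta_1^{-1}$. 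I expect the main obstacle to be the volume-comparison inequality under perturbation by a merely nef (not ample) divisor $\varepsilon_0 L-B$: one needs that $\vol_X$ of a nef-perturbed class degrades by a controlled factor, which requires either a Fujita-type approximation argument or the observation that on a Mori dream space (available here by Lemma \ref{fanocone_lem}) all the relevant volumes are piecewise-polynomial, so the scaling estimate can be made uniform. Once that comparison is pinned down, the rest is bookkeeping with the explicit constants $\varepsilon_0,\varepsilon_1$.
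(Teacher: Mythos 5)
Your overall strategy --- passing through the valuative description of $\delta$, dominating $B$ by a divisor $D\ge B$ with $D\sim_\R\varepsilon L$ via Lemma \ref{fanocone_lem}, and invoking $\alpha(X,\Delta)\ge\delta(X,\Delta)/(n+1)$ from Lemma \ref{alpha_lemma} --- is exactly the paper's. But the estimates you assemble point in the wrong directions; in effect you have swapped the ingredients of the two parts. Write $N':=\int_0^\infty\vol_X((L-B)-xF)\,dx$ and $D':=A_{(X,\Delta+B)}(F)\,((L-B)^{\cdot n})$, so that $\hat\beta_{(X,\Delta+B)}(F)=1-N'/D'$. For part (1) you must bound $N'/D'$ from \emph{above}, which requires an upper bound on $N'$ (namely $N'\le\int_0^\infty\vol_X(L-xF)\,dx$, from $L-B\le L$) and a lower bound on $D'$. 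Instead you propose a lower bound on $N'$ together with $A_{(X,\Delta+B)}(F)\le A_{(X,\Delta)}(F)$ and $((L-B)^{\cdot n})\le(L^{\cdot n})$; those three facts only yield an upper bound on $\hat\beta_{(X,\Delta+B)}(F)$ and cannot produce the claimed conclusion $\hat\beta_{(X,\Delta+B)}(F)\ge 1-\delta_0^{-1}$. Symmetrically, part (2) needs a lower bound on $N'$ (this is where $\vol_X((L-D)-xF)=(1-\varepsilon)^n\vol_X\bigl(L-\tfrac{x}{1-\varepsilon}F\bigr)$ and hence the exponent $n+1$ enter) together with the upper bounds $A_{(X,\Delta+B)}(F)\le A_{(X,\Delta)}(F)$ and $((L-B)^{\cdot n})\le(L^{\cdot n})$, whereas you list precisely the reverse inequalities.

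Beyond the direction errors, the one estimate that actually produces the denominator $n\cdot\delta(X,\Delta)+n+1$ in $\varepsilon_0$ is absent from your part (1): it is the quantitative lower bound $A_{(X,\Delta+B)}(F)\ge A_{(X,\Delta+D)}(F)\ge\bigl(1-\tfrac{\varepsilon_0(n+1)}{\delta(X,\Delta)}\bigr)A_{(X,\Delta)}(F)$, obtained by applying Lemma \ref{alpha_lemma} to $\tfrac{\delta(X,\Delta)}{\varepsilon_0(n+1)}D\sim_\R\tfrac{\delta(X,\Delta)}{n+1}L$; combined with $((L-B)^{\cdot n})\ge(1-\varepsilon_0)^n(L^{\cdot n})$ it gives $\hat\beta_{(X,\Delta+B)}(F)\ge 1-\bigl((\delta(X,\Delta)-\varepsilon_0(n+1))(1-\varepsilon_0)^n\bigr)^{-1}\ge 1-\delta_0^{-1}$. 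You record only the useless direction $A_{(X,\Delta+B)}(F)\le A_{(X,\Delta)}(F)$ and defer the rest to ``the algebra with $\varepsilon_0$,'' which is exactly the step that cannot be recovered from your inequalities. Finally, the difficulty you anticipate about comparing volumes after a merely nef perturbation does not arise in the paper's argument: once $B$ is replaced by $D\ge B$ with $D\sim_\R\varepsilon L$, one has $L-D\sim_\R(1-\varepsilon)L$ and the volume rescaling is an exact identity, so no Fujita-type approximation or piecewise-polynomiality is needed.
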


\begin{proof}
Take any $\varepsilon\in(0,1)$ with $\varepsilon<\delta(X, \Delta)/(n+1)$. 
Assume that an effective $\Q$-Cartier $\Q$-divisor $B$ on $X$ satisfies that 
$\varepsilon L-B$ is nef. By Lemma \ref{fanocone_lem}, there exists $D\geq B$ with 
$D\sim_\R\varepsilon L$. Since $\varepsilon<1$, $L-B$ is ample. 
Take any prime divisor $F$ over $X$ and fix any resolution $\sigma\colon Y\to X$ 
with $F\subset Y$. By Lemma \ref{alpha_lemma}, we have 
\[
0\leq A_{\left(X, \Delta+\frac{\delta(X, \Delta)}{\varepsilon(n+1)}D\right)}(F)
=A_{(X, \Delta)}(F)
-\frac{\delta(X, \Delta)}{\varepsilon(n+1)}\ord_F\sigma^*D. 
\]
Thus we get 
\begin{eqnarray*}
&&A_{(X, \Delta)}(F)\geq 
A_{(X, \Delta+B)}(F)\\
&\geq& 
A_{(X, \Delta+D)}(F)\geq
\left(1-\frac{\varepsilon(n+1)}{\delta(X, \Delta)}\right)A_{(X, \Delta)}(F)>0.
\end{eqnarray*}
This implies that $(X, \Delta+B)$ is a log Fano pair. 
On the other hand, we have 
\begin{eqnarray*}
&&\vol_X(L-xF)\geq \vol_X((L-B)-xF)\\
&\geq &\vol_X((L-D)-xF)=(1-\varepsilon)^n\vol_X\left(L-\frac{x}{1-\varepsilon}F\right)
\end{eqnarray*}
for any $x\in\R_{\geq 0}$. In particular, we have 
\[
(L^{\cdot n})\geq ((L-B)^{\cdot n})\geq (1-\varepsilon)^n(L^{\cdot n}).
\]

\eqref{fanolog_thm1}
For any prime divisor $F$ over $X$, we have 
\begin{eqnarray*}
\hat{\beta}_{(X, \Delta+B)}(F)&\geq&1-\frac{\int_0^\infty\vol_X(L-xF)dx}
{\left(1-\frac{\varepsilon_0(n+1)}{\delta(X, \Delta)}\right)(1-\varepsilon_0)^n
A_{(X, \Delta)}(F)(L^{\cdot n})}\\
&\geq&1-\frac{1}{(\delta(X, \Delta)-\varepsilon_0(n+1))(1-\varepsilon_0)^n}\\
&\geq&
1-\frac{1}{\delta(X, \Delta)-\varepsilon_0(n\cdot\delta(X, \Delta)+n+1)}
=1-\frac{1}{\delta_0}.
\end{eqnarray*}

\eqref{fanolog_thm2}
For any $B$ in the assumption of \eqref{fanolog_thm2}, we can find 
$\varepsilon\in(0,\varepsilon_1)$ such that $\varepsilon L-B$ is ample. Moreover, 
for any $\delta_2\in[\delta(X, \Delta), \delta_1)$, we can find a prime divisor $F$ over $X$ 
such that $\hat{\beta}_{(X, \Delta)}(F)\leq 1-1/\delta_2$ holds. For such $F$, we have 
\begin{eqnarray*}
\hat{\beta}_{(X, \Delta+B)}(F)&\leq&1-\frac{(1-\varepsilon)^{n+1}
\int_0^\infty\vol_X(L-xF)dx}
{A_{(X, \Delta)}(F)(L^{\cdot n})}\\
&\leq&1-\frac{(1-\varepsilon)^{n+1}}{\delta_2}
\leq 1-\frac{1}{\delta_1\delta_2/\delta(X, \Delta)}.
\end{eqnarray*}
Thus $\delta(X, \Delta+B)\leq\delta_1\delta_2/\delta(X, \Delta)$ for any 
$\delta_2\in[\delta(X, \Delta), \delta_1)$. Therefore, we get the inequality 
$\delta(X, \Delta)\leq \delta_1$.
\end{proof}

\begin{proof}[Proof of Theorem \ref{fanolog_intro_thm}]
If $\varepsilon L-B$ is nef, then $((X, \Delta+B), L-B)$ is K-semistable by Theorem 
\ref{fanolog_thm} \eqref{fanolog_thm1}. 
Assume that $\varepsilon L-B$ is ample. 
We can take $\delta_0\in(1, \delta(X, \Delta))$ such that $\varepsilon_0 L-B$ is ample, 
where $\varepsilon_0:=(\delta(X, \Delta)-\delta_0)/(n\delta(X, \Delta)+n+1)$. 
Now Theorem \ref{fanolog_intro_thm} is an immediate consequence of Theorem 
\ref{fanolog_thm} \eqref{fanolog_thm1}. 
\end{proof}

\begin{corollary}\label{fanolog_cor}
Fix any norm $\|\cdot\|$ on $\ND(X)_\R$. Take any $\delta_0$, $\delta_1\in\R_{>0}$ 
with $\delta(X, \Delta)\in(\delta_0,\delta_1)$. Then there exists $\varepsilon\in\R_{>0}$ 
such that $(X, \Delta+B)$ is a log Fano pair with 
$\delta(X, \Delta+B)\in(\delta_0,\delta_1)$ for any effective $\Q$-Cartier 
$\Q$-divisor $B$ on $X$ 
with $\| B\|\leq \varepsilon$. 
\end{corollary}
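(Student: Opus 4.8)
The plan is to deduce Corollary \ref{fanolog_cor} from Theorem \ref{fanolog_thm} together with the elementary cone lemma, Proposition \ref{norm_prop}. The point is that Theorem \ref{fanolog_thm} controls $\delta(X,\Delta+B)$ whenever $B$ is dominated \emph{(as a difference)} by a small multiple of $L$ with respect to nefness/ampleness, so I only need to translate the hypothesis ``$\|B\|$ small'' into such a domination statement.

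\medskip

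First I would fix the data: since $\delta(X,\Delta)\in(\delta_0,\delta_1)$, set
\[
\varepsilon_0:=\frac{\delta(X,\Delta)-\delta_0}{n\cdot\delta(X,\Delta)+n+1},\qquad
\varepsilon_1:=\min\left\{\frac{\delta(X,\Delta)}{n+1},\ 1-\sqrt[n+1]{\frac{\delta(X,\Delta)}{\delta_1}}\right\},
\]
both of which are strictly positive by the strict inequalities $\delta_0<\delta(X,\Delta)<\delta_1$. Put $\varepsilon^\ast:=\min\{\varepsilon_0,\varepsilon_1\}>0$. By Theorem \ref{fanolog_thm}\eqref{fanolog_thm1}, any effective $\Q$-Cartier $\Q$-divisor $B$ with $\varepsilon^\ast L-B$ nef makes $(X,\Delta+B)$ a log Fano pair with $\delta(X,\Delta+B)\geq\delta_0$; by Theorem \ref{fanolog_thm}\eqref{fanolog_thm2}, if moreover $\varepsilon^\ast L-B$ is ample then $\delta(X,\Delta+B)\leq\delta_1$, and in fact I only need the combination: if $\varepsilon^\ast L - B$ is ample then both hold, so $\delta(X,\Delta+B)\in[\delta_0,\delta_1]$. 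To get the \emph{open} interval $(\delta_0,\delta_1)$ I would simply shrink: choose $\delta_0',\delta_1'$ with $\delta_0<\delta_0'<\delta(X,\Delta)<\delta_1'<\delta_1$, run the argument with $\delta_0',\delta_1'$, and obtain $\delta(X,\Delta+B)\in[\delta_0',\delta_1']\subset(\delta_0,\delta_1)$.

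\medskip

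Second, I would produce the $\varepsilon$ of the statement. Apply Proposition \ref{norm_prop} with the chosen norm $\|\cdot\|$ and with ``$\varepsilon$'' there equal to $\varepsilon^\ast$ (built from $\delta_0',\delta_1'$): this yields $\delta>0$ such that every class of the form $t(L+\xi)$ with $\|\xi\|\le\delta$ lies in $\sC^2_{\varepsilon^\ast}$, i.e.\ can be written $t'(L-a)$ with $a\in\Nef(X)_\R$, $\|a\|\le\varepsilon^\ast$. Now take $\varepsilon:=\delta$. Given an effective $\Q$-Cartier $\Q$-divisor $B$ with $\|B\|\le\delta$, apply this to $\xi:=-B$ (viewing $B$'s class in $\ND(X)_\R$): then $L-B\in\sC^2_{\varepsilon^\ast}$, so $L-B\equiv t'(L-a)$ for some $t'\ge 0$ and $a\in\Nef(X)$ with $\|a\|\le\varepsilon^\ast$. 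Comparing $L$-coefficients (or using that $L$ is ample, so nonzero, and that the expression determines $t'$) gives $t'=1$, whence $B\equiv a$ is nef and $\varepsilon^\ast L-B\equiv \varepsilon^\ast L - a$. Since $\|a\|\le\varepsilon^\ast$ and, after possibly shrinking $\delta$ once more, $\varepsilon^\ast L-a$ is \emph{ample} for every nef class $a$ of norm $\le\varepsilon^\ast$ — this is exactly the content that $L$ ample implies a whole norm-ball around it of ample classes, or alternatively apply Proposition \ref{norm_prop} a second time to land in the interior — we conclude $\varepsilon^\ast L-B$ is ample. Feeding this into the first paragraph gives $\delta(X,\Delta+B)\in(\delta_0,\delta_1)$ and $(X,\Delta+B)$ log Fano, as desired.

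\medskip

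I expect the only real subtlety to be bookkeeping: matching the two ``$\varepsilon$'s'' (the geometric threshold $\varepsilon^\ast$ coming from Theorem \ref{fanolog_thm} versus the norm threshold $\delta$ coming from Proposition \ref{norm_prop}), and ensuring the nef/ample dichotomy is handled so that one lands in the \emph{ample} case — hence the open interval rather than the closed one. Both are resolved by the standard trick of shrinking the target interval $(\delta_0,\delta_1)$ slightly before invoking the quantitative theorems, which costs nothing since $\delta(X,\Delta)$ is strictly interior. Everything else is a direct substitution: effectivity of $B$ is preserved under the hypotheses, $B$ nef with $\varepsilon^\ast L-B$ ample is precisely the input required by Theorem \ref{fanolog_thm}\eqref{fanolog_thm1}–\eqref{fanolog_thm2}, and Lemma \ref{fanocone_lem} is not even needed here since we argue with $\R$-divisor classes throughout.
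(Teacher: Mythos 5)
Your skeleton matches the paper's intent exactly: the paper's proof is literally ``Follows immediately from Theorem \ref{fanolog_thm}'', and your plan --- shrink to $\delta_0<\delta_0'<\delta(X,\Delta)<\delta_1'<\delta_1$, form $\varepsilon^{\ast}:=\min\{\varepsilon_0,\varepsilon_1\}>0$ from the two parts of that theorem, and then translate ``$\|B\|$ small'' into ``$\varepsilon^{\ast}L-B$ ample'' --- is the right reading of ``immediately''. The shrinking trick to convert the closed interval $[\delta_0',\delta_1']$ into the open interval $(\delta_0,\delta_1)$ is correct and worth making explicit.

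However, your translation step, as written, does not work. First, in the decomposition $L-B=t'(L-a)$ supplied by Proposition \ref{norm_prop}, the scalar $t'$ is \emph{not} $1$: inspecting that proof with $\xi=-B$, one gets $t'=1+t_0^{-1}\delta$ and $a=\frac{1}{1+t_0^{-1}\delta}\left(t_0^{-1}\delta L+B\right)$, so $B=(1+t_0^{-1}\delta)a-t_0^{-1}\delta L$ need not be nef, and $\varepsilon^{\ast}L-B=(\varepsilon^{\ast}+t_0^{-1}\delta)L-(1+t_0^{-1}\delta)a$ is ample minus nef, from which nothing follows. Second, the assertion that ``$\varepsilon^{\ast}L-a$ is ample for every nef class $a$ of norm $\le\varepsilon^{\ast}$'' is false: if $\|L\|\le 1$, the nef class $a:=\varepsilon^{\ast}L/\|L\|$ has $\|a\|=\varepsilon^{\ast}$ but $\varepsilon^{\ast}L-a=\varepsilon^{\ast}(1-\|L\|^{-1})L$ is not ample; and shrinking $\delta$ cannot repair a statement that quantifies over $\|a\|\le\varepsilon^{\ast}$ rather than $\|a\|\le\delta$. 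The entire detour through Proposition \ref{norm_prop} should be deleted. What you actually need --- and what you correctly name in passing --- is only that the ample cone is open in $\ND(X)_\R$: since $\varepsilon^{\ast}L$ is ample, there is $r>0$ (depending on $\varepsilon^{\ast}$ and $L$) such that $\varepsilon^{\ast}L-c$ is ample for every class $c$ with $\|c\|\le r$; take $\varepsilon:=r$, note $\varepsilon_i L-B=(\varepsilon_i-\varepsilon^{\ast})L+(\varepsilon^{\ast}L-B)$ is then ample for $i=0,1$, and feed this into both parts of Theorem \ref{fanolog_thm} applied with $\delta_0',\delta_1'$. With that one-line replacement your argument is complete and coincides with the paper's.
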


\begin{proof}
Follows immediately from Theorem \ref{fanolog_thm}. 
\end{proof}

\section{Perturbing boundaries}\label{perturb_section}

In this section, we prove Theorem \ref{mainthm}. 
Technically, the following proposition is important in this article. 

\begin{proposition}\label{perturb_prop}
Let $(X, \Delta)$ be an $n$-dimensional projective demi-normal pair, $L$ be an ample 
$\Q$-line bundle and $N$ be an effective and nef $\Q$-divisor on $X$. Then, for any 
semiample demi-normal test configuration $(\sX,\sL)/\pr^1$ of $(X, L)$, we have 
\[
n\mu_N(L) J^{\NA}(\sX, \sL)\geq \DF_{\Delta+N}(\sX, \sL)-\DF_\Delta(\sX, \sL). 
\]
\end{proposition}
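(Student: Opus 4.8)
The plan is to reduce everything to an intersection-number computation on a single semiample demi-normal test configuration and then invoke the negativity lemma (Lemma \ref{negativity_lem}). First I would unwind the definitions: by Definition \ref{DF_dfn} \eqref{DF_dfn2}, the difference $\DF_{\Delta+N}(\sX,\sL)-\DF_\Delta(\sX,\sL)$ equals
\[
\frac{1}{(L^{\cdot n})}\left((\sL^{\cdot n}\cdot N_{\sX})-\frac{n}{n+1}\mu_N(L)(\sL^{\cdot n+1})\right),
\]
since the only terms that change are the one involving $N_\sX$ (the closure of $N\times(\pr^1\setminus\{0\})$) and the one involving $\mu_{K_X+\Delta}(L)$, which shifts by $\mu_N(L)$ because $\mu_{K_X+\Delta+N}(L)=\mu_{K_X+\Delta}(L)+\mu_N(L)$. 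So the statement to prove becomes
\[
n\mu_N(L)\left((\Theta^*\sL\cdot\Pi^*p_1^*L^{\cdot n})-\tfrac{1}{n+1}(\sL^{\cdot n+1})\right)\geq (\sL^{\cdot n}\cdot N_{\sX})-\tfrac{n}{n+1}\mu_N(L)(\sL^{\cdot n+1}),
\]
using the formula for $J^{\NA}$ from Definition \ref{DF_dfn} \eqref{DF_dfn1}. The two $\tfrac{1}{n+1}(\sL^{\cdot n+1})$ contributions cancel after collecting, leaving the cleaner inequality $n\mu_N(L)(\Theta^*\sL\cdot\Pi^*p_1^*L^{\cdot n})\geq (\sL^{\cdot n}\cdot N_\sX)$.

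Next I would pass to the partial normalization $\sY$ of the graph of $\sX\dashrightarrow X\times\pr^1$, with maps $\Theta\colon\sY\to\sX$ and $\Pi\colon\sY\to X\times\pr^1$, so that all classes live on one variety (this is exactly the setup already fixed in Definition \ref{DF_dfn}, and by Proposition \ref{invariant_prop} \eqref{invariant_prop1} the invariants are unchanged). On $\sY$ one has $\Theta^*N_\sX$ and $\Pi^*p_1^*N$, both effective; their difference is supported on the central fiber $\sY_0$ (they agree over $\pr^1\setminus\{0\}$). Call this difference $E$, a $\Q$-Cartier $\Q$-divisor supported on $\sY_0$. Writing $\sL':=\Theta^*\sL$ and $H:=\Pi^*p_1^*L$, the target inequality rewrites as $n\mu_N(L)(\sL'\cdot H^{\cdot n})\geq (\sL'^{\cdot n}\cdot(\Pi^*p_1^*N+E))$. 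Since $\Pi^*p_1^*N\equiv_{\pr^1}\mu_N(L)(L^{\cdot n})^{-1}\cdot$(something)—more precisely, over $\pr^1$ the divisor $\Pi^*p_1^*N$ is pulled back from $X$, so $(\sL'^{\cdot n}\cdot\Pi^*p_1^*N)$ can be compared fiberwise to $n\mu_N(L)(\sL'\cdot H^{\cdot n})$ by an interpolation/telescoping argument replacing copies of $\sL'$ by $H$ one at a time, each step controlled because $\sL'-H$ is supported on $\sY_0$ and $\sL'$, $H$ are both nef over $\pr^1$ (apply Lemma \ref{negativity_lem} with the appropriate nef auxiliary divisors). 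This handles the $\Pi^*p_1^*N$ part, and the remaining term $-(\sL'^{\cdot n}\cdot E)$ with $E$ supported on $\sY_0$ is again nonnegative by Lemma \ref{negativity_lem}, provided $E$ can be bounded by a difference of nef-over-$\pr^1$ divisors; here one uses that both $\Theta^*N_\sX$ and $\Pi^*p_1^*N$ are themselves nef over $\pr^1$ (as $N$ is nef on $X$ and $\sL$ is semiample, hence pullbacks of nef classes stay nef over the base), so $E$ is a difference of two such, and Lemma \ref{negativity_lem} applies to each cross term.

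The main obstacle I anticipate is the bookkeeping in the telescoping step: one must expand $(\sL'^{\cdot n}\cdot\Pi^*p_1^*N)$ versus $n\mu_N(L)(\sL'^{\cdot n-1}\cdot H\cdot(\text{stuff}))$ and show that every error term picked up along the way is a quantity of the form $(\sM_1\cdots\sM_{n-1}\cdot D^{\cdot 2})$ with each $\sM_i$ nef over $\pr^1$ and $D$ supported on $\sY_0$, so that Lemma \ref{negativity_lem} forces it to have the right sign. In particular one needs $\sL'-H$ (which is supported on $\sY_0$ but not obviously effective) to interact correctly: writing $\sL'-H = (\sL'-H)^+-(\sL'-H)^-$ or simply treating it as a single fiber-supported $\Q$-Cartier divisor $D$ and applying the negativity lemma to $(\text{nef})^{n-1}\cdot D^{\cdot 2}\le 0$ is the cleanest route. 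Once the signs are checked, collecting all the inequalities yields exactly $n\mu_N(L)J^{\NA}(\sX,\sL)\geq\DF_{\Delta+N}(\sX,\sL)-\DF_\Delta(\sX,\sL)$, as desired.
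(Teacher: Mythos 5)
Your overall strategy coincides with the paper's: pass to the partial normalization $\sY$ of the graph, reduce (after the cancellation you note) to the inequality $n\mu_N(L)(\Theta^*\sL\cdot\Pi^*p_1^*L^{\cdot n})\geq(\sL^{\cdot n}\cdot N_{\sX})$, compare $(\Theta^*\sL^{\cdot n}\cdot\Pi^*p_1^*N)$ with $n(\Theta^*\sL\cdot\Pi^*p_1^*L^{\cdot n-1}\cdot\Pi^*p_1^*N)$ by telescoping, with error terms $(\Theta^*\sL^{\cdot j}\cdot\Pi^*p_1^*L^{\cdot n-2-j}\cdot(\Theta^*\sL-\Pi^*p_1^*L)^{\cdot 2}\cdot\Pi^*p_1^*N)\leq 0$ from Lemma \ref{negativity_lem} (with $\Pi^*p_1^*N$ as one of the nef factors), and conclude using $\Pi^*p_1^*L^{\cdot n-1}\cdot(\Pi^*p_1^*N-\mu_N(L)\Pi^*p_1^*L)\equiv 0$ as a $1$-cycle. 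This is exactly the paper's computation, and that part of your outline is sound.

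The gap is in your handling of the discrepancy $E$ between the transform of $N$ and $\Pi^*p_1^*N$. First, $\Theta^*N_{\sX}$ need not exist: $N_{\sX}$ is merely the Weil-divisorial closure of $N\times(\pr^1\setminus\{0\})$ on the demi-normal variety $\sX$ and has no reason to be $\Q$-Cartier; the DF invariant only pairs the Cartier class $\sL^{\cdot n}$ with the cycle $N_{\sX}$. Second, even when it is defined, $\Theta^*N_{\sX}$ is not nef over $\pr^1$ in general (strict transforms of nef divisors are not nef), so the claim that $E$ is a difference of nef-over-$\pr^1$ divisors fails. Third, Lemma \ref{negativity_lem} controls quadratic expressions $(\sM_1\cdots\sM_{n-1}\cdot D^{\cdot 2})$; it gives no sign for the linear term $(\Theta^*\sL^{\cdot n}\cdot E)$, and ``cross terms'' $(\cdots\cdot D_1\cdot D_2)$ of a negative semidefinite form have no definite sign either. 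The correct and much simpler argument --- and the place where the hypothesis that $N$ is \emph{effective} is actually used --- is: by the projection formula $(\sL^{\cdot n}\cdot N_{\sX})_{\sX}=(\Theta^*\sL^{\cdot n}\cdot N_{\sY})_{\sY}$ with $N_{\sY}$ the strict transform on $\sY$; since $N$ is effective, $\Pi^*p_1^*N-N_{\sY}$ is an effective divisor supported on $\sY_0$, and since $\Theta^*\sL$ is nef this yields $(\Theta^*\sL^{\cdot n}\cdot N_{\sY})\leq(\Theta^*\sL^{\cdot n}\cdot\Pi^*p_1^*N)$. With that step repaired, your outline closes up and agrees with the paper's proof.
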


\begin{proof}
Let 
\[\xymatrix{
& \sY  \ar[dl]_\Theta \ar[dr]^\Pi & \\
\sX \ar@{-->}[rr] & & X\times\pr^1
}\]
be the partial normalization of the graph and let $p_1\colon X\times\pr^1\to X$ be the 
first projection. We write $\phi:=\Theta^*\sL$, $\psi_L:=\Pi^*p_1^*L$, 
$\psi_N:=\Pi^*p_1^*N$ for simplicity. 
By Lemma \ref{negativity_lem}, we have 
\[
(\phi^{\cdot j}\cdot\psi_L^{\cdot n-2-j}\cdot (\phi-\psi_L)^{\cdot 2}\cdot\psi_N)\leq 0
\]
for any $0\leq j\leq n-2$. Thus we get $(\phi^{\cdot n}\cdot \psi_N)\leq 
n(\phi\cdot\psi_L^{\cdot n-1}\cdot\psi_N)$. 
Moreover, we note that 
\begin{eqnarray*}
&&(L^{\cdot n})\left(\DF_{\Delta+N}(\sX, \sL)-\DF_\Delta(\sX, \sL)\right)\\
&=&(\phi^{\cdot n}\cdot N_\sY)-\frac{n}{n+1}\mu_N(L)(\phi^{\cdot n+1})\\
&\leq&(\phi^{\cdot n}
\cdot \psi_N)-\frac{n}{n+1}\mu_N(L)(\phi^{\cdot n+1})
\end{eqnarray*}
since $N$ is effective. We also note that 
\begin{eqnarray*}
&&(\phi^{\cdot n}
\cdot \psi_N)-\frac{n}{n+1}\mu_N(L)(\phi^{\cdot n+1})\\
&\leq&n((\phi\cdot\psi_L^{\cdot n-1}\cdot\psi_N)-\mu_N(L)(\phi\cdot\psi_L^{\cdot n}))
+n\mu_N(L)(L^{\cdot n})J^{\NA}(\sX, \sL)\\
&=&n\mu_N(L)(L^{\cdot n})J^{\NA}(\sX, \sL)
\end{eqnarray*}
since $\psi_L^{\cdot n-1}\cdot(\psi_N-\mu_N(L)\psi_L)\equiv 0$ as a $\Q$-1-cycle. 
\end{proof}

As consequences of Proposition \ref{perturb_prop}, we have many results. 
The following is a baby version of Theorem \ref{W_thm}. 

\begin{corollary}\label{curve_cor}
Let $(X, \Delta)$ be a $1$-dimensional projective slc pair such that $K_X+\Delta$ is 
ample. Then $((X, \Delta), L)$ is uniformly K-stable for any ample $\Q$-line bundle $L$. 
\end{corollary}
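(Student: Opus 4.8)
The statement to prove is Corollary \ref{curve_cor}: for a $1$-dimensional projective slc pair $(X,\Delta)$ with $K_X+\Delta$ ample, $((X,\Delta),L)$ is uniformly K-stable for every ample $\Q$-line bundle $L$. Since $\dim X=1$ the N\'eron--Severi space $\ND(X)_\R$ is one-dimensional, so any ample $\Q$-line bundle $L$ is numerically proportional to $K_X+\Delta$; in particular $\mu_{K_X+\Delta}(L)=(K_X+\Delta)/(L)>0$, and there is a positive rational number $c$ with $c\,(K_X+\Delta)\equiv L$, equivalently $(K_X+\Delta)=\mu_{K_X+\Delta}(L)\,L$ numerically. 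The plan is to compare $\DF_\Delta(\sX,\sL)$ with the Donaldson--Futaki invariant of the canonically polarized pair and then invoke Theorem \ref{odk_thm} \eqref{odk_thm1} together with Proposition \ref{perturb_prop}.

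First I would set $N$ to be an effective nef $\Q$-divisor representing a suitable positive multiple of $L$; concretely, choose a positive integer so that $N\sim_\Q (K_X+\Delta)$ can be arranged, or more robustly apply Proposition \ref{perturb_prop} with $\Delta$ replaced by the empty boundary and $N:=\Delta$, and separately handle the ``$K_X$ part''. The cleanest route: write $K_X+\Delta = \mu_{K_X+\Delta}(L)L$ in $\ND(X)_\R$, pick $N$ effective and nef with $N\equiv (K_X+\Delta)$ (possible on a curve since an ample class is represented by an effective divisor and the difference is numerically trivial). Then for any semiample demi-normal test configuration $(\sX,\sL)/\pr^1$ of $(X,L)$, the log Donaldson--Futaki invariant $\DF_\Delta(\sX,\sL)$ equals, up to the scaling $\mu_{K_X+\Delta}(L)$, the Donaldson--Futaki invariant of the test configuration of $(X,K_X+\Delta)$ obtained by the same $\sX$ with $\sL$ rescaled — because on a curve $K_{\sX/\pr^1}+\Delta_\sX$ and the ``canonical'' divisor agree numerically fiberwise and the intersection numbers defining $\DF$ are computed in a $2$-dimensional $\sX$, where the self-intersection terms scale quadratically in the polarization.

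The key computational step is then: by Theorem \ref{odk_thm} \eqref{odk_thm1} applied to $(X,K_X+\Delta)$ with its canonical polarization (which is slc and whose canonical bundle is ample by hypothesis), every semiample demi-normal test configuration of $(X,K_X+\Delta)$ satisfies $\DF_\Delta \geq \tfrac1n J^{\NA} = J^{\NA}$ (here $n=1$). Transporting this bound through the numerical identification of the polarizations — which only changes $\DF$ and $J^{\NA}$ by the same positive scalar factor (a power of $\mu_{K_X+\Delta}(L)$, since both are homogeneous in $\sL$) — gives $\DF_\Delta(\sX,\sL)\geq \delta\, J^{\NA}(\sX,\sL)$ for an explicit $\delta\in(0,1)$ depending only on $\mu_{K_X+\Delta}(L)$. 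Alternatively, and perhaps more in the spirit of this section, apply Proposition \ref{perturb_prop} directly with the decomposition of $K_X+\Delta$ into an ``$L$-proportional'' piece: writing $\Delta'$ so that $K_X+\Delta = K_X+\Delta'+N$ is awkward on a curve, so I expect the numerical-scaling argument above to be the efficient one.

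\textbf{Main obstacle.} The only subtlety I anticipate is justifying carefully that $\DF_\Delta(\sX,\sL)$ for a test configuration of $(X,L)$ coincides, after rescaling the line bundle, with $\DF_\Delta(\sX,\sL')$ for the corresponding test configuration of $(X,K_X+\Delta)$ — i.e., that replacing $L$ by the numerically proportional $K_X+\Delta$ genuinely produces a test configuration in the sense of Definition \ref{tc_dfn}, and that $J^{\NA}$ and $\DF_\Delta$ transform compatibly. On a curve this is forced by $\rho=1$: every semiample class on $\sX$ that restricts to $L$ on the general fiber is, after the allowed $\Q$-linear rescaling, a test configuration for any chosen proportional polarization on $X$, and the defining intersection numbers in Definition \ref{DF_dfn} are visibly homogeneous. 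Once this bookkeeping is in place, the uniform bound is immediate from Theorem \ref{odk_thm} \eqref{odk_thm1}, and no genuinely new estimate beyond Proposition \ref{perturb_prop} is needed; indeed the corollary is labelled a ``baby version'' of Theorem \ref{W_thm}, so I would keep the argument short and defer the general-dimensional phenomena to the proof of Theorem \ref{W_thm}.
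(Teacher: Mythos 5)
Your argument rests on the claim that, because $\dim X=1$, the space $\ND(X)_\R$ is one-dimensional and hence every ample $L$ is numerically proportional to $K_X+\Delta$. This is false in the setting of the corollary: an slc curve is a demi-normal curve, hence connected and reduced but possibly \emph{reducible}, and for a nodal curve with $k$ irreducible components one has $\ND(X)_\R\cong\R^k$ via multidegrees. For instance, two copies of $\pr^1$ glued at three nodes form an slc curve with $K_X$ ample of multidegree $(1,1)$, while a line bundle of multidegree $(1,2)$ is ample and not proportional to $K_X$. Since your entire proposal funnels through the identification $L\equiv c(K_X+\Delta)$, it collapses at the first step. Even for irreducible $X$ there is a secondary problem: numerical proportionality is weaker than $\Q$-linear equivalence, so the rescaled pair $(\sX,c^{-1}\sL)$ is a test configuration of $(X,c^{-1}L)$ and not of $(X,K_X+\Delta)$ unless $c^{-1}L\sim_\Q K_X+\Delta$; Theorem \ref{odk_thm} \eqref{odk_thm1} is stated (and cited from \cite{BHJ}) for the polarization $L=K_X+\Delta$ itself, so it cannot be transported along a purely numerical identification without further justification.

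The paper's proof goes in the direction you dismissed as awkward, and it is short. After replacing $L$ by a high multiple (harmless: replacing $(L,\sL)$ by $(mL,m\sL)$ leaves $\DF_\Delta$ unchanged and rescales $J^{\NA}$ by $m$, so uniform K-stability is unaffected), one may assume $L-(K_X+\Delta)$ is ample. Choose a general effective $A\sim_\Q L-(K_X+\Delta)$, so that $(X,\Delta+A)$ is slc with $K_X+\Delta+A\sim_\Q L$ ample. Theorem \ref{odk_thm} \eqref{odk_thm1} then gives $\DF_{\Delta+A}(\sX,\sL)\geq J^{\NA}(\sX,\sL)$ (here $n=1$), while Proposition \ref{perturb_prop} with $N=A$ gives $\DF_{\Delta+A}(\sX,\sL)-\DF_\Delta(\sX,\sL)\leq\mu_A(L)\,J^{\NA}(\sX,\sL)$. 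Combining, $\DF_\Delta(\sX,\sL)\geq(1-\mu_A(L))\,J^{\NA}(\sX,\sL)$ with $1-\mu_A(L)=\mu_{K_X+\Delta}(L)>0$. In other words, one \emph{adds} a boundary to reduce to the canonically polarized case for the given $L$, rather than trying to change the polarization to $K_X+\Delta$.
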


\begin{proof}
We may assume that $L-(K_X+\Delta)$ is ample (by replacing $L$ with high multiple). 
Take a general $\Q$-divisor $A\geq 0$ with $A\sim_\Q L-(K_X+\Delta)$. Then 
$(X, \Delta+A)$ is an slc pair. Thus, by Theorem \ref{odk_thm} \eqref{odk_thm1}, for any 
semiample, demi-normal test configuration $(\sX, \sL)/\pr^1$ of $(X, L)$, we have 
$\DF_{\Delta+A}(\sX, \sL)\geq J^{\NA}(\sX, \sL)$. On the other hand, by Proposition 
\ref{perturb_prop}, we have $\DF_\Delta(\sX, \sL)\geq (1-\mu_A(L))J^{\NA}(\sX, \sL)$. 
Thus we get the assertion since $1-\mu_A(L)=\mu_{K_X+\Delta}(L)>0$. 
\end{proof}

\begin{corollary}\label{fano1_cor}
Let $(X, \Delta)$ be an $n$-dimensional log Fano pair. Set $L:=-(K_X+\Delta)$. 
Assume that $\delta(X, \Delta)>1$. Take any $\delta\in(1, \delta(X, \Delta))$. 
Set
\[
\varepsilon:=\frac{\delta(X, \Delta)-\delta}{n\cdot\delta(X, \Delta)+n+1}, \quad\quad
\delta_1:=\frac{\delta-1}{(n+1)\delta}.
\]
Take any nef $\Q$-divisor $N$ on $X$ with 
$\varepsilon L-N$ nef. Then, for any semiample, normal test 
configuration $(\sX, \sL')/\pr^1$ of $(X, L-N)$, we have 
\[
\DF_\Delta(\sX, \sL')\geq(\delta_1-n\mu_N(L-N)) J^{\NA}(\sX, \sL').
\] 
$($In particular, if $\delta_1>n\mu_N(L-N)$ $($resp., if $\delta_1\geq n\mu_N(L-N)$$)$, 
then $((X, \Delta), L-N)$ is uniformly K-stable $($resp., K-semistable$)$.$)$
\end{corollary}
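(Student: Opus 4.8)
The plan is to combine Theorem~\ref{fanolog_thm}~\eqref{fanolog_thm1} (applied to pass from $(X,\Delta)$ to the log Fano pair $(X,\Delta+N)$ with $L-N=-(K_X+\Delta+N)$) with Lemma~\ref{technical_lem} and Proposition~\ref{perturb_prop}. First I would check the hypotheses for Theorem~\ref{fanolog_thm}~\eqref{fanolog_thm1}: with $\delta_0:=\delta$ and $\varepsilon_0=\varepsilon$ as chosen, for any effective $\Q$-Cartier $\Q$-divisor $N$ with $\varepsilon L-N$ nef, we obtain that $(X,\Delta+N)$ is a log Fano pair with $\delta(X,\Delta+N)\geq\delta$. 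Here one must observe $-(K_X+\Delta+N)=L-N$ is ample since $\varepsilon<1$, so $(\sX,\sL')/\pr^1$ is genuinely a test configuration of $(X,L-N)$ with $L-N$ the anticanonical polarization of $(X,\Delta+N)$.

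Next, since $\delta(X,\Delta+N)\geq\delta=1/(1-(1-\delta^{-1}))$, Lemma~\ref{technical_lem} applies with $\varepsilon$ there equal to $1-\delta^{-1}$: for any semiample, normal test configuration $(\sX,\sL')/\pr^1$ of $(X,L-N)$,
\[
\DF_{\Delta+N}(\sX,\sL')\geq\frac{1-\delta^{-1}}{n+1}\,J^{\NA}(\sX,\sL')=\frac{\delta-1}{(n+1)\delta}\,J^{\NA}(\sX,\sL')=\delta_1\,J^{\NA}(\sX,\sL').
\]
Then I would invoke Proposition~\ref{perturb_prop} with the base pair $(X,\Delta)$, polarization $L-N$, and boundary divisor $N$ (which is effective and nef), giving
\[
n\mu_N(L-N)\,J^{\NA}(\sX,\sL')\geq\DF_{\Delta+N}(\sX,\sL')-\DF_\Delta(\sX,\sL').
\]
Subtracting, $\DF_\Delta(\sX,\sL')\geq\DF_{\Delta+N}(\sX,\sL')-n\mu_N(L-N)J^{\NA}(\sX,\sL')\geq(\delta_1-n\mu_N(L-N))J^{\NA}(\sX,\sL')$, which is exactly the claimed inequality. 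The parenthetical consequence about uniform K-stability (resp.\ K-semistability) of $((X,\Delta),L-N)$ then follows from Definition~\ref{K_dfn}, using Proposition~\ref{invariant_prop}~\eqref{invariant_prop2} to reduce semiample demi-normal test configurations of $(X,L-N)$ to the associated normal ones (note $X$ is normal here as $(X,\Delta)$ is klt), together with Proposition~\ref{invariant_prop}~\eqref{invariant_prop3} to handle the trivial case when $J^{\NA}=0$; for uniform K-stability one also needs the constant $\delta_1-n\mu_N(L-N)$ to be strictly less than $1$, which is automatic since $\delta_1<1$.

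The only subtle point is bookkeeping with the definition of $\DF$: Proposition~\ref{perturb_prop} requires both Donaldson-Futaki invariants to be taken with respect to the \emph{same} polarization $L-N$, so I must be careful that $\DF_{\Delta+N}(\sX,\sL')$ appearing from Lemma~\ref{technical_lem} (where it is the log-Fano normalized Donaldson-Futaki invariant of the anticanonically polarized pair) coincides with the $\DF_{\Delta+N}(\sX,\sL')$ appearing in Proposition~\ref{perturb_prop} — this holds because $\mu_{K_X+\Delta+N}(L-N)=\mu_{-(L-N)}(L-N)=-1$ matches the normalization in Definition~\ref{DF_dfn}~\eqref{DF_dfn2}. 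I expect this consistency check, rather than any hard inequality, to be the main thing to get right; once it is in place the argument is a short chain of substitutions.
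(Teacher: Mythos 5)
Your argument is essentially identical to the paper's proof: reduce to the pair $(X,\Delta+N)$ via Theorem \ref{fanolog_thm} \eqref{fanolog_thm1}, apply Lemma \ref{technical_lem} with $\varepsilon=1-\delta^{-1}$ to get $\DF_{\Delta+N}(\sX,\sL')\geq\delta_1 J^{\NA}(\sX,\sL')$, and subtract the bound from Proposition \ref{perturb_prop}; the normalization check $\mu_{K_X+\Delta+N}(L-N)=-1$ you worry about is fine and both occurrences of $\DF_{\Delta+N}$ are literally Definition \ref{DF_dfn} \eqref{DF_dfn2} with polarization $L-N$.

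There is one small but genuine omission: the corollary only assumes $N$ is \emph{nef}, whereas you silently treat $N$ as effective. Effectivity is genuinely needed in both of your inputs --- Theorem \ref{fanolog_thm} \eqref{fanolog_thm1} requires the perturbing divisor $B$ to be effective (otherwise $(X,\Delta+B)$ need not even be a pair), and the proof of Proposition \ref{perturb_prop} uses $N\geq 0$ to bound $(\phi^{\cdot n}\cdot N_{\sY})$ by $(\phi^{\cdot n}\cdot\psi_N)$. The paper disposes of this in one line via Lemma \ref{fanocone_lem}: since $(X,\Delta)$ is log Fano, $X$ is (a small modification of) a Mori dream space, so the nef class $N$ admits an effective (indeed semiample) representative $N'\sim_{\Q}N$; replacing $N$ by $N'$ changes nothing in the conclusion because $\DF_\Delta(\sX,\sL')$ does not involve $N$ and $\mu_{N'}(L-N)=\mu_N(L-N)$ is numerical. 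You should add this reduction; with it, your proof coincides with the paper's.
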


\begin{proof}
By Lemma \ref{fanocone_lem}, we may assume that $N$ is effective. By 
Theorem \ref{fanolog_thm} \eqref{fanolog_thm1}, $(X, \Delta+N)$ is a log Fano pair 
with $\delta(X, \Delta+N)\geq \delta$. 
Thus we get  
\begin{eqnarray*}
&&\delta_1\cdot J^{\NA}(\sX, \sL')\leq \DF_{\Delta+N}(\sX, \sL')\\
&\leq&\DF_\Delta(\sX, \sL')+n\mu_N(L-N)\cdot J^{\NA}(\sX, \sL').
\end{eqnarray*}
from Lemma \ref{technical_lem} and Proposition 
\ref{perturb_prop}.
\end{proof}

\begin{corollary}\label{fano2_cor}
Let $(X, \Delta)$ be an $n$-dimensional log Fano pair and set $L:=-(K_X+\Delta)$. 
Assume that $\delta(X, \Delta)>1$. Set 
\[
\varepsilon:=\frac{\delta(X, \Delta)-1}{(n^2+n+1)\delta(X, \Delta)+n^2+n-1}.
\]
Take any nef $\Q$-divisor $N$ on $X$ with $\varepsilon L-N$ ample. Then 
$((X, \Delta), L-N)$ is uniformly K-stable. In particular, by Proposition \ref{norm_prop}, 
for any norm $\|\cdot\|$ on $\ND(X)_\R$, 
there exists $\varepsilon'\in(0,1)$ such that $((X, \Delta), L')$ is uniformly K-stable 
for any $\Q$-line bundle $L'$ on $X$ with $\|L'-L\|\leq \varepsilon'$. 
\end{corollary}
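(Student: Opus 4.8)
The plan is to deduce Corollary~\ref{fano2_cor} by choosing the parameters in Corollary~\ref{fano1_cor} carefully and then feeding the resulting condition $\delta_1>n\mu_N(L-N)$. First I would fix a target $\delta\in(1,\delta(X,\Delta))$; the natural choice is the midpoint-type value, but in fact any $\delta$ with $1<\delta<\delta(X,\Delta)$ works, and I expect the cleanest bookkeeping comes from writing everything in terms of $\delta(X,\Delta)$ directly, i.e. taking $\delta$ to be whatever is convenient so that $\varepsilon$ above matches $\frac{\delta(X,\Delta)-\delta}{n\delta(X,\Delta)+n+1}$ and $\delta_1=\frac{\delta-1}{(n+1)\delta}$. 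With $N$ satisfying $\varepsilon L-N$ ample (hence in particular nef, so Corollary~\ref{fano1_cor} applies), we know $(X,\Delta+N)$ is log Fano and, for every semiample normal test configuration $(\sX,\sL')/\pr^1$ of $(X,L-N)$,
\[
\DF_\Delta(\sX,\sL')\geq\bigl(\delta_1-n\mu_N(L-N)\bigr)J^{\NA}(\sX,\sL').
\]

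Next I would estimate $\mu_N(L-N)$ from above using only $\varepsilon$. Since $0\leq N$ and $\varepsilon L-N$ is nef, we have $N\preceq\varepsilon L$ in the sense that $\varepsilon L-N$ is a nef $\Q$-divisor, so intersecting with the nef class $(L-N)^{\cdot n-1}$ gives $\bigl((L-N)^{\cdot n-1}\cdot N\bigr)\leq\varepsilon\bigl((L-N)^{\cdot n-1}\cdot L\bigr)$, and since $L-N\preceq L$ similarly, $\bigl((L-N)^{\cdot n-1}\cdot L\bigr)\leq(L^{\cdot n})$. On the other hand $\bigl((L-N)^{\cdot n}\bigr)\geq(1-\varepsilon)^n(L^{\cdot n})$ by the same monotonicity (this is exactly the kind of inequality already used in the proof of Theorem~\ref{fanolog_thm}). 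Hence
\[
\mu_N(L-N)=\frac{\bigl((L-N)^{\cdot n-1}\cdot N\bigr)}{\bigl((L-N)^{\cdot n}\bigr)}\leq\frac{\varepsilon}{(1-\varepsilon)^n}.
\]
So it suffices to arrange $\delta_1>\dfrac{n\varepsilon}{(1-\varepsilon)^n}$, i.e.
\[
\frac{\delta-1}{(n+1)\delta}>\frac{n\varepsilon}{(1-\varepsilon)^n}.
\]

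The remaining step is the arithmetic verification that the specific $\varepsilon$ in the statement, namely $\varepsilon=\frac{\delta(X,\Delta)-1}{(n^2+n+1)\delta(X,\Delta)+n^2+n-1}$, does satisfy this inequality for a suitable choice of $\delta$. I would take $\delta$ defined by $\delta-1=\frac{\delta(X,\Delta)-1}{?}$ so that $\varepsilon$ in Corollary~\ref{fano1_cor} becomes this value; concretely, comparing denominators, one checks $\varepsilon=\frac{\delta(X,\Delta)-\delta}{n\delta(X,\Delta)+n+1}$ forces $\delta$ to be an explicit affine function of $\delta(X,\Delta)$ strictly between $1$ and $\delta(X,\Delta)$. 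With that $\delta$, both sides of the target inequality are explicit in $\delta(X,\Delta)$ and $n$; using the crude bound $(1-\varepsilon)^n\geq 1-n\varepsilon$ reduces it to a rational inequality, and the denominator $n^2+n+1$ is chosen exactly so that it holds. Finally, the last sentence follows by combining the main conclusion with Proposition~\ref{norm_prop}: given a norm, pick $\varepsilon'=\delta$ from that proposition applied with the role of $\varepsilon$ played by the $\varepsilon$ above (so that $\|L'-L\|\leq\varepsilon'$ forces $L'=L-N$ for some effective nef $N$ with $\varepsilon L-N$ ample, after possibly shrinking, via Lemma~\ref{fanocone_lem}); then uniform K-stability of $((X,\Delta),L')$ is the conclusion just proved. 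The main obstacle is purely the constant-chasing: making sure the stated $\varepsilon$ is small enough that $(1-\varepsilon)^n$ does not erode the gap $\delta_1$, and that $\delta$ can be chosen consistently in $(1,\delta(X,\Delta))$; everything else is a direct invocation of Corollary~\ref{fano1_cor}, Lemma~\ref{fanocone_lem}, and Proposition~\ref{norm_prop}.
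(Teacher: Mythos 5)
Your overall strategy is the paper's: reduce to Corollary \ref{fano1_cor} and then show that $\varepsilon L-N$ ample forces $\delta_1>n\mu_N(L-N)$. But the key estimate you use, $\mu_N(L-N)\leq \varepsilon/(1-\varepsilon)^n$, is too lossy, and the ``constant-chasing'' you defer to the end does not close. Concretely, for every admissible $\delta\in(1,\delta(X,\Delta))$ one has $\delta_1=\frac{\delta-1}{(n+1)\delta}<\frac{1}{n+1}$, while for $n=2$ your bound gives $\frac{n\varepsilon}{(1-\varepsilon)^n}=\frac{2\varepsilon}{(1-\varepsilon)^2}$, which tends to $\frac{7}{18}>\frac{1}{3}$ as $\delta(X,\Delta)\to\infty$ (since then $\varepsilon\to\frac{1}{7}$). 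So for $n=2$ and $\delta(X,\Delta)$ large (roughly $\geq 15$) the target inequality $\delta_1>\frac{n\varepsilon}{(1-\varepsilon)^n}$ is false for \emph{every} choice of $\delta$, and no tuning of the denominator $n^2+n+1$ can rescue it. The loss comes from comparing the numerator of $\mu_N(L-N)$ with $(L^{\cdot n})$ and the denominator with $(1-\varepsilon)^n(L^{\cdot n})$ separately: that throws away a factor of $(1-\varepsilon)^{n-1}$.

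The repair is exactly what the paper does. Compare both numerator and denominator against the single quantity $((L-N)^{\cdot n-1}\cdot L)$: since $\varepsilon L-N$ is ample and $L-N$ is ample, $((L-N)^{\cdot n-1}\cdot N)<\varepsilon((L-N)^{\cdot n-1}\cdot L)$, and $((L-N)^{\cdot n})=((L-N)^{\cdot n-1}\cdot L)-((L-N)^{\cdot n-1}\cdot N)>(1-\varepsilon)((L-N)^{\cdot n-1}\cdot L)$, whence $\mu_N(L-N)<\varepsilon/(1-\varepsilon)$ with no dimensional loss. The condition $\frac{n\varepsilon}{1-\varepsilon}\leq\delta_1$ is then equivalent to $\varepsilon\leq\frac{\delta-1}{(n^2+n+1)\delta-1}$, and with the paper's choice $\delta:=(\delta(X,\Delta)+1)/2$ this is precisely the stated $\varepsilon$ (the paper phrases this as the equivalence of $\mu_N(L-N)<\frac{\delta-1}{n(n+1)\delta}$ with $((L-N)^{\cdot n-1}\cdot(\varepsilon L-N))>0$). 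One still has to check $\varepsilon\leq\frac{\delta(X,\Delta)-\delta}{n\delta(X,\Delta)+n+1}$ so that Corollary \ref{fano1_cor} applies, which the paper does for $n\geq 2$; note also that your $\delta$ is a rational, not affine, function of $\delta(X,\Delta)$, and that the final reduction via Proposition \ref{norm_prop} needs the (easy) scaling invariance of uniform K-stability, since that proposition only puts $L'$ in the cone $\sC^2_\varepsilon$ of multiples $t(L-a)$.
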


\begin{proof}
We may assume that $n\geq 2$. We just apply Corollary \ref{fano1_cor} for 
$\delta:=(\delta(X, \Delta)+1)/2$. Note that 
\begin{eqnarray*}
&&\left((n^2+n+1)\delta(X, \Delta)+n^2+n-1\right)
-2\left(n\cdot\delta(X, \Delta)+n+1\right)\\
&=&(n^2-n+1)\delta(X, \Delta)+n^2-n-3\geq n^2-n\geq 0
\end{eqnarray*}
since $n\geq 2$. Thus we get 
\[
\varepsilon\leq\frac{\delta(X, \Delta)-1}{2\left(n\cdot\delta(X, \Delta)+n+1\right)}
=\frac{\delta(X, \Delta)-\delta}{n\cdot\delta(X,\Delta)+n+1}.
\]
Moreover, the condition $\mu_N(L-N)<(\delta-1)/(n(n+1)\delta)$ is equivalent to the 
condition 
\[
\left((L-N)^{\cdot n-1}\cdot\left(\frac{\delta-1}{(n^2+n+1)\delta-1}L-N\right)\right)>0.
\]
Note that $(\delta-1)/((n^2+n+1)\delta-1)$ is equal to $\varepsilon$.
\end{proof}

\begin{thm}[{cf.\ \cite{weinkove, song-weinkove}}]\label{W_thm}
Let $(X, \Delta)$ be an $n$-dimensional projective slc pair with $n\geq 2$ 
and let $L$ be an ample 
$\Q$-line bundle on $X$. Assume that $\mu_{K_X+\Delta}(L)>0$ and 
\[
\frac{n^2}{n^2-1}\mu_{K_X+\Delta}(L)L-(K_X+\Delta)
\]
is ample $($resp., nef$)$. Then $((X, \Delta), L)$ is uniformly K-stable $($resp., 
K-semistable$)$. 
\end{thm}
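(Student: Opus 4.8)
\textbf{Proof plan for Theorem \ref{W_thm}.}

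The plan is to reduce the statement to the known uniform K-stability of the canonically polarized case (Theorem \ref{odk_thm} \eqref{odk_thm1}) via Proposition \ref{perturb_prop}, exactly as in the baby case Corollary \ref{curve_cor}, but keeping careful track of the intersection-number inequality coming from the negativity lemma. First I would set $\mu:=\mu_{K_X+\Delta}(L)>0$ and, after replacing $L$ by a sufficiently divisible multiple, write $A$ for a general effective $\Q$-divisor with $A\sim_\Q L-(K_X+\Delta)$ (resp.\ the nef/semiample analogue when only nefness is assumed); then $(X,\Delta+A)$ is again an slc pair with $K_X+\Delta+A\sim_\Q L$ ample. By Theorem \ref{odk_thm} \eqref{odk_thm1}, for every semiample demi-normal test configuration $(\sX,\sL)/\pr^1$ of $(X,L)$ one has
\[
\DF_{\Delta+A}(\sX,\sL)\geq \frac{1}{n}\, J^{\NA}(\sX,\sL).
\]
On the other hand, Proposition \ref{perturb_prop} applied with $N:=A$ (which is effective and nef) gives
\[
\DF_{\Delta+A}(\sX,\sL)-\DF_\Delta(\sX,\sL)\leq n\,\mu_A(L)\, J^{\NA}(\sX,\sL),
\]
so combining the two yields $\DF_\Delta(\sX,\sL)\geq\bigl(\tfrac1n-n\mu_A(L)\bigr)J^{\NA}(\sX,\sL)$. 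Since $\mu_A(L)=\mu_{L-(K_X+\Delta)}(L)=1-\mu$, the coefficient equals $\tfrac1n-n(1-\mu)$, which is strictly positive precisely when $\mu>1-\tfrac{1}{n^2}=\tfrac{n^2-1}{n^2}$, i.e.\ when $\tfrac{n^2}{n^2-1}\mu>1$. This is not quite the hypothesis, so a naive application is too weak and the estimate must be sharpened.

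The key refinement — and the step I expect to be the main obstacle — is to not discard the term $\tfrac{1}{n+1}\mu_A(L)(\sL^{\cdot n+1})$ crudely but to combine the negativity-lemma estimates more efficiently. Concretely, the proof of Proposition \ref{perturb_prop} shows $(\phi^{\cdot n}\cdot\psi_N)\leq n(\phi\cdot\psi_L^{\cdot n-1}\cdot\psi_N)$, but one actually has the full chain $(\phi^{\cdot j}\cdot\psi_L^{\cdot n-j}\cdot\psi_N)\leq(\phi^{\cdot j-1}\cdot\psi_L^{\cdot n-j+1}\cdot\psi_N)+\text{(non-positive correction)}$ for each $j$, using that $\psi_N\equiv\mu_N(L)\psi_L$ modulo classes orthogonal to powers of $\psi_L$. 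Instead of bounding $\DF_{\Delta+A}-\DF_\Delta$ by $n\mu_A(L)J^{\NA}$, I would extract from the same intersection calculus the better inequality
\[
\DF_{\Delta+A}(\sX,\sL)-\DF_\Delta(\sX,\sL)\leq (n-1)\,\mu_A(L)\, J^{\NA}(\sX,\sL),
\]
i.e.\ save one factor. With this, $\DF_\Delta\geq\bigl(\tfrac1n-(n-1)(1-\mu)\bigr)J^{\NA}$, and positivity of $\tfrac1n-(n-1)(1-\mu)$ is equivalent to $\mu>1-\tfrac{1}{n(n-1)}=\tfrac{n^2-n-1}{n^2-n}$; this still does not match, so I would need the optimal version. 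Working out the precise coefficient that the refined negativity estimates produce should yield exactly the threshold $\mu_{K_X+\Delta}(L)>\tfrac{n^2-1}{n^2}$, equivalently the ampleness of $\tfrac{n^2}{n^2-1}\mu_{K_X+\Delta}(L)L-(K_X+\Delta)$.

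Once the correct intersection inequality is in hand, the remainder is routine: in the ample case I obtain $\DF_\Delta(\sX,\sL)\geq c\,J^{\NA}(\sX,\sL)$ with an explicit $c>0$ depending only on $n$ and $\mu$, which by Definition \ref{K_dfn} \eqref{K_dfn1} (after checking $c<1$, or rescaling) is precisely uniform K-stability of $((X,\Delta),L)$. In the nef case one replaces $A$ by a semiample representative of a multiple of $L-(K_X+\Delta)$, takes it general, and runs the same argument with $c\geq 0$, obtaining $\DF_\Delta(\sX,\sL)\geq 0$ for all nontrivial semiample demi-normal test configurations, i.e.\ K-semistability via Definition \ref{K_dfn} \eqref{K_dfn2} together with Proposition \ref{invariant_prop} \eqref{invariant_prop3}. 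Finally I would note that the condition $\mu_{K_X+\Delta}(L)>0$ is used to guarantee $L-(K_X+\Delta)$ is big (so that $A\geq 0$ exists), and that the slc hypothesis on $(X,\Delta)$ plus generality of $A$ ensures $(X,\Delta+A)$ is slc so that Theorem \ref{odk_thm} \eqref{odk_thm1} applies.
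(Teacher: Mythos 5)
Your reduction to Theorem \ref{odk_thm} \eqref{odk_thm1} via Proposition \ref{perturb_prop} is the right framework, and you correctly diagnose that taking $A\sim_\Q L-(K_X+\Delta)$ is too weak. But the fix you propose --- sharpening the negativity-lemma estimate in Proposition \ref{perturb_prop} from $n\mu_N(L)$ to $(n-1)\mu_N(L)$ --- is both unjustified and, more importantly, incapable of working even in principle. Improving that constant to any $k$ would only lower your threshold from $\mu>1-\tfrac{1}{n^2}$ to $\mu>1-\tfrac{1}{kn}$, i.e.\ to a condition of the form ``$\mu_{K_X+\Delta}(L)$ is close to $1$'', whereas the actual hypothesis is the ampleness of $\tfrac{n^2}{n^2-1}\mu_{K_X+\Delta}(L)L-(K_X+\Delta)$, a condition of a different nature: if $K_X+\Delta\equiv\tfrac12 L$, the hypothesis holds with $\mu=\tfrac12$, which no threshold of your form can reach. (Also, $\mu_{K_X+\Delta}(L)>0$ does not imply that $L-(K_X+\Delta)$ is big, so your divisor $A$ need not even exist.)

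The missing idea is to rescale the class in which $A$ lives, not the constant in Proposition \ref{perturb_prop}. The paper picks $\varepsilon\in\Q$ with $0<\varepsilon\ll1$ (resp.\ $-1\ll\varepsilon<0$ in the nef case) so that $cL-(K_X+\Delta)$ is ample for $c:=\tfrac{n^2}{n^2-1}\mu_{K_X+\Delta}(L)-\varepsilon$, and takes $A$ general effective with small coefficients in that class; then $(X,\Delta+A)$ is slc with $K_X+\Delta+A\sim_\Q cL$ ample (this is where $\mu_{K_X+\Delta}(L)>0$ enters), and Theorem \ref{odk_thm} \eqref{odk_thm1} applied to $((X,\Delta+A),cL)$, together with the scale-invariance of $\DF$ and the homogeneity of $J^{\NA}$, gives $\DF_{\Delta+A}(\sX,\sL)\geq\tfrac{c}{n}J^{\NA}(\sX,\sL)$. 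Since now $\mu_A(L)=c-\mu_{K_X+\Delta}(L)$, the \emph{unimproved} Proposition \ref{perturb_prop} yields
\[
\DF_\Delta(\sX,\sL)\geq\Bigl(\tfrac{c}{n}-n\bigl(c-\mu_{K_X+\Delta}(L)\bigr)\Bigr)J^{\NA}(\sX,\sL)=\Bigl(n-\tfrac1n\Bigr)\varepsilon\, J^{\NA}(\sX,\sL),
\]
which is the desired uniform bound for $\varepsilon>0$, and gives K-semistability by letting $\varepsilon\to0^-$ in the nef case. The constant $\tfrac{n^2}{n^2-1}$ is exactly where $\tfrac{c}{n}-n(c-\mu)$ changes sign as a function of $c$; no refinement of the intersection-theoretic inequality is needed.
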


\begin{proof}
Take any $\varepsilon\in \Q$ with $0<\varepsilon\ll 1$ (resp., $-1\ll\varepsilon <0$) 
such that 
\[
\left(\frac{n^2}{n^2-1}\mu_{K_X+\Delta}(L)-\varepsilon\right)L-(K_X+\Delta)
\]
is ample. Take a general effective $\Q$-divisor 
$A$ with small coefficients $\Q$-linearly equivalent to $((n^2/(n^2-1))\mu_{K_X+\Delta}(L)-\varepsilon)L-(K_X+\Delta)$. Then $(X, \Delta+A)$ is an slc pair and 
\[
K_X+\Delta+A\sim_\Q\left(\frac{n^2}{n^2-1}\mu_{K_X+\Delta}(L)-\varepsilon\right)L.
\]
Thus, for any semiample, demi-normal test configuration $(\sX, \sL)/\pr^1$ of $(X, L)$, 
we have 
\[
\DF_{\Delta+A}(\sX, \sL)\geq \frac{1}{n}\left(\frac{n^2}{n^2-1}\mu_{K_X+\Delta}(L)-\varepsilon\right)J^{\NA}(\sX, \sL)
\]
by Theorem \ref{odk_thm} \eqref{odk_thm1}. On the other hand, by Proposition 
\ref{perturb_prop}, we have 
\[
\DF_{\Delta+A}(\sX, \sL)-\DF_\Delta(\sX, \sL)\leq n\mu_A(L)\cdot J^{\NA}(\sX, \sL). 
\]
Therefore we get 
\[
\DF_\Delta(\sX, \sL)\geq \left(n-\frac{1}{n}\right)\varepsilon\cdot J^{\NA}(\sX, \sL)
\]
by combining those inequalities. 
\end{proof}

\begin{remark}\label{CD_rmk}
\begin{enumerate}
\renewcommand{\theenumi}{\arabic{enumi}}
\renewcommand{\labelenumi}{(\theenumi)}
\item\label{CD_rmk1}
Assume that $\mu_{K_X+\Delta}(L)=0$ and $-(K_X+\Delta)$ is nef. 
Then, Dervan pointed out to the author that, we can easily show that 
$K_X+\Delta$ is numerically trivial. When $K_X+\Delta$ is numerically trivial, the uniform 
K-stability and the K-semistability of $((X,\Delta), L)$ is well-understood 
by Theorem \ref{odk_thm} \eqref{odk_thm2}.
\item\label{CD_rmk2}
The author found Theorem \ref{W_thm} under the additional hypothesis 
``$K_X+\Delta$ is ample" in order to prove Corollary \ref{gt2_cor}. 
Codogni and Dervan pointed out to the author 
that we do not need the assumption. 
\end{enumerate}
\end{remark}

\begin{corollary}\label{gt1_cor}
Let $(X, \Delta)$ be an $n$-dimensional projective slc pair such that $L:=K_X+\Delta$ 
is ample. Then $((X, \Delta), L+N)$ is uniformly K-stable for any nef $\Q$-divisor $N$ with 
$L-(n^2-1)N$ big. 
\end{corollary}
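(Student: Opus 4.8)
The plan is to reduce Corollary \ref{gt1_cor} to Theorem \ref{W_thm} applied to the polarization $L':=L+N$. First I would record that $L'=K_X+\Delta+N$ is ample (since $L$ is ample and $N$ is nef), so $((X,\Delta),L')$ is a well-posed polarized slc pair. The point is to verify the numerical hypothesis of Theorem \ref{W_thm} with $L'$ in place of $L$: I need $\mu_{K_X+\Delta}(L')>0$ and
\[
\frac{n^2}{n^2-1}\mu_{K_X+\Delta}(L')L'-(K_X+\Delta)
\]
ample (or at least nef). Since $K_X+\Delta=L'-N$, this divisor equals
\[
\frac{n^2}{n^2-1}\mu_{K_X+\Delta}(L')L'-(L'-N)=\left(\frac{n^2}{n^2-1}\mu_{K_X+\Delta}(L')-1\right)L'+N,
\]
so it suffices to have the scalar coefficient $\frac{n^2}{n^2-1}\mu_{K_X+\Delta}(L')-1\geq 0$ together with something strict somewhere; better, I would instead arrange the coefficient to be $\geq 0$ and use that $N$ is merely nef, producing the "resp., nef" conclusion, or push a little harder for ampleness.

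Next I would estimate $\mu_{K_X+\Delta}(L')$ from below using the hypothesis that $L-(n^2-1)N$ is big, i.e.\ $\mu:=\mu_{K_X+\Delta}(L')=\bigl((L')^{\cdot n-1}\cdot(L'-N)\bigr)/\bigl((L')^{\cdot n}\bigr)=1-\mu_N(L')$. So the numerical condition $\frac{n^2}{n^2-1}\mu\geq 1$ is equivalent to $\mu\geq \frac{n^2-1}{n^2}$, i.e.\ $\mu_N(L')\leq \frac{1}{n^2}$, i.e.
\[
\bigl((L')^{\cdot n-1}\cdot(L-(n^2-1)N)\bigr)=\bigl((L')^{\cdot n-1}\cdot(L'-N-(n^2-1)N)\bigr)\geq 0,
\]
since $\bigl((L')^{\cdot n-1}\cdot(L'-n^2N)\bigr)=(L')^{\cdot n}(1-n^2\mu_N(L'))$. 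Now $L-(n^2-1)N$ being big forces $\bigl((L')^{\cdot n-1}\cdot(L-(n^2-1)N)\bigr)>0$, because $L'$ is ample and the intersection of the $(n-1)$-st power of an ample class with a big class is strictly positive (a big divisor is $\Q$-linearly equivalent to an ample plus an effective class, and ample-times-ample and ample-times-effective intersection numbers are nonnegative, with the first strictly positive). This gives $\mu_N(L')<\frac{1}{n^2}$, hence $\mu=1-\mu_N(L')>\frac{n^2-1}{n^2}>0$ and the scalar coefficient $\frac{n^2}{n^2-1}\mu-1>0$.

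Finally, with the coefficient strictly positive and $N$ nef, the divisor $\bigl(\frac{n^2}{n^2-1}\mu-1\bigr)L'+N$ is a sum of an ample $\Q$-divisor and a nef $\Q$-divisor, hence ample; and $\mu_{K_X+\Delta}(L')>0$. Theorem \ref{W_thm} (the "ample/uniformly K-stable" case) then yields that $((X,\Delta),L')=((X,\Delta),L+N)$ is uniformly K-stable, which is exactly the claim. The only step needing care is the strict positivity $\bigl((L')^{\cdot n-1}\cdot(L-(n^2-1)N)\bigr)>0$ coming from bigness; this is the one place where "big" (rather than merely "pseudo-effective" or "nef") is used, and it is what upgrades the conclusion from K-semistability to uniform K-stability. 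Everything else is bookkeeping with the identity $\mu_{K_X+\Delta}(L+N)=1-\mu_N(L+N)$.
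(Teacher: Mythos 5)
Your argument is essentially the paper's own proof: both set $M:=L+N$, use $\mu_{K_X+\Delta}(M)=1-\mu_N(M)$ together with the bigness of $M-n^2N=L-(n^2-1)N$ to get $\mu_N(M)<1/n^2$, rewrite $\frac{n^2}{n^2-1}\mu_{K_X+\Delta}(M)M-(K_X+\Delta)$ as $\frac{1}{n^2-1}(1-n^2\mu_N(M))M+N$ (ample plus nef, hence ample), and invoke Theorem \ref{W_thm}. The only omission is the case $n=1$, where Theorem \ref{W_thm} does not apply (the coefficient $\tfrac{n^2}{n^2-1}$ is undefined); the paper disposes of it first via Corollary \ref{curve_cor}, and you should do the same.
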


\begin{proof}
We may assume that $n\geq 2$ by Corollary \ref{curve_cor}. 
Set $M:=L+N$. Since $M-n^2N$ is big, we have $\mu_N(M)<1/n^2$. Note that 
\[
\frac{n^2}{n^2-1}\mu_L(M)M-L=\frac{1}{n^2-1}(1-n^2\mu_N(M))M+N
\]
is ample. Thus $((X, \Delta), L+N)$ is uniformly K-stable 
by Theorem \ref{W_thm}. 
\end{proof}

\begin{corollary}\label{gt2_cor}
Let $(X, \Delta)$ be a projective slc pair such that $L:=K_X+\Delta$ is ample. 
Fix any norm $\|\cdot\|$ on $\ND(X)_\R$. Then there exists $\delta>0$ such that 
$((X, \Delta), L')$ is uniformly K-stable for any $\Q$-line bundle $L'$ on $X$ 
with $\|L'-L\|\leq\delta$. 
\end{corollary}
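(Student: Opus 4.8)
The plan is to deduce Corollary \ref{gt2_cor} from Corollary \ref{gt1_cor} together with the cone-theoretic Proposition \ref{norm_prop}, exactly in the spirit of the ``in particular'' clause of Corollary \ref{fano2_cor}. The point is that Corollary \ref{gt1_cor} gives uniform K-stability of $((X,\Delta), L')$ for every $L'$ of the form $L+N$ with $N$ nef and $L-(n^2-1)N$ big; what we need is a full Euclidean neighborhood of $L$ in $\ND(X)_\R$, and Proposition \ref{norm_prop} is precisely the tool that converts a neighborhood into a region of the shape $\{t(L+a): a\in\Nef(X), \|a\|\le\varepsilon\}$.

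First I would record that it suffices to find a Euclidean open neighborhood $U\subset\ND(X)_\R$ of $L$ such that every $\Q$-line bundle $L'\in U$ is uniformly K-stable; also, since uniform K-stability of $((X,\Delta),L')$ depends only on the ray $\R_{>0}L'$ (the Donaldson--Futaki invariant and $J^{\NA}$ scale homogeneously, so the defining inequality is scale-invariant), it is enough to handle the set $\sC^3_\varepsilon$ from Proposition \ref{norm_prop} for a suitable $\varepsilon$. Next I would fix the constant: choose $\varepsilon>0$ small enough that $L-(n^2-1)a$ is big (equivalently ample after a small perturbation, but bigness is all Corollary \ref{gt1_cor} requires) for every $a\in\Nef(X)$ with $\|a\|\le\varepsilon$; this is possible because bigness is an open condition and $L$ is ample, and because $\{a\in\Nef(X):\|a\|\le\varepsilon\}$ shrinks to $0$ as $\varepsilon\to 0$ — concretely, $L-(n^2-1)a = L - (n^2-1)a$ stays in the ample (hence big) cone once $\|a\|<\|L\|/((n^2-1)C)$ for the norm-comparison constant $C$. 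Then by Corollary \ref{gt1_cor}, for every such $a$ the pair $((X,\Delta), L+a)$ is uniformly K-stable, i.e. $((X,\Delta), L')$ is uniformly K-stable for all $L'\in\sC^3_\varepsilon$.

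Finally I would invoke Proposition \ref{norm_prop}: for this $\varepsilon$ it produces $\delta>0$ with $\sC^1_\delta\subset\sC^2_\varepsilon\cap\sC^3_\varepsilon\subset\sC^3_\varepsilon$. Since $\sC^1_\delta$ contains the Euclidean ball $\{L'\in\ND(X)_\R:\|L'-L\|\le\delta\}$ (take $t=1$, $\xi=L'-L$), every $\Q$-line bundle $L'$ with $\|L'-L\|\le\delta$ lies in $\sC^3_\varepsilon$ and is therefore uniformly K-stable, which is the assertion. I expect no serious obstacle here: the only mild subtlety is making sure the scaling remark is clean (so that membership in $\sC^3_\varepsilon$, not just in $\{L+a\}$, already suffices) and that the ``big'' hypothesis of Corollary \ref{gt1_cor} — rather than ampleness — is genuinely enough, which it is since $L-(n^2-1)a$ is in fact ample for $a$ small. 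Everything else is the routine cone-comparison packaged in Proposition \ref{norm_prop}.

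\begin{proof}
By homogeneity of $\DF_\Delta$ and $J^{\NA}$ in the polarization, the uniform K-stability of $((X,\Delta), L')$ depends only on the ray $\R_{>0}L'\subset\ND(X)_\R$; hence it suffices to show that $((X,\Delta), L')$ is uniformly K-stable for every $\Q$-line bundle $L'$ lying in a set of the form $\sC^3_\varepsilon$ as in Proposition \ref{norm_prop}, and then to apply Proposition \ref{norm_prop} itself. We may assume $n\geq 2$ by Corollary \ref{curve_cor}. Since $L$ is ample and bigness is an open condition in $\ND(X)_\R$, there exists $\varepsilon>0$ such that $L-(n^2-1)a$ is ample (in particular big) for every $a\in\Nef(X)$ with $\|a\|\leq\varepsilon$. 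For any such $a$, writing $N:=a$ we have that $N$ is nef and $L-(n^2-1)N$ is big, so $((X,\Delta), L+N)$ is uniformly K-stable by Corollary \ref{gt1_cor}. Therefore $((X,\Delta), L')$ is uniformly K-stable for every $\Q$-line bundle $L'\in\sC^3_\varepsilon$. Now apply Proposition \ref{norm_prop} to this $\varepsilon$: there is $\delta>0$ with $\sC^1_\delta\subset\sC^2_\varepsilon\cap\sC^3_\varepsilon\subset\sC^3_\varepsilon$. If $L'$ is a $\Q$-line bundle with $\|L'-L\|\leq\delta$, then taking $t=1$ and $\xi=L'-L$ shows $L'\in\sC^1_\delta\subset\sC^3_\varepsilon$, so $((X,\Delta), L')$ is uniformly K-stable. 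This proves the corollary.
\end{proof}
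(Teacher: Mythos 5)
Your proof is correct and takes essentially the same route as the paper, whose own proof is the one-line ``follows immediately from Corollaries \ref{curve_cor}, \ref{gt1_cor} and Proposition \ref{norm_prop}.'' You have simply spelled out the scale-invariance and cone-membership bookkeeping that the paper leaves implicit.
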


\begin{proof}
Follows immediately from Corollaries \ref{curve_cor}, 
\ref{gt1_cor} and Proposition \ref{norm_prop}.
\end{proof}

\begin{proof}[Proof of Theorem \ref{mainthm}]
Follows immediately from Corollaries \ref{fano2_cor}, \ref{gt2_cor} and Theorem 
\ref{odk_thm} \eqref{odk_thm3}. 
\end{proof}

\end{document}